\documentclass[11pt]{article}
\pdfoutput=1
\usepackage[UKenglish]{isodate}
\usepackage[T1]{fontenc}
\usepackage[noBBpl]{mathpazo}
\usepackage{microtype}
\usepackage{multicol}
\usepackage[hidelinks]{hyperref}
\cleanlookdateon
\usepackage[a4paper,margin=2.8cm]{geometry}
\usepackage{stmaryrd}
\SetSymbolFont{stmry}{bold}{U}{stmry}{m}{n}

\usepackage{amsmath, amssymb, amsthm, amsfonts, dsfont, array, float, url, tikz, afterpage}
\usetikzlibrary{arrows.meta, shapes, calc}

\renewcommand{\:}{\colon}
\newcommand{\<}{\langle}
\renewcommand{\>}{\rangle}
\renewcommand{\leq}{\leqslant}
\renewcommand{\geq}{\geqslant}
\newcommand{\Z}{\mathbb{Z}}
\renewcommand{\S}{\mathcal{S}}
\newcommand{\C}{\mathfrak{C}}
\newcommand{\X}{\mathfrak{X}}
\newcommand{\Sym}{\mathrm{Sym}}
\newcommand{\Alt}{\mathrm{Alt}}
\newcommand{\Aut}{\mathrm{Aut}}
\newcommand{\Homeo}{\mathrm{Homeo}}

\newcommand{\Supp}{\mathrm{Supp}}
\newcommand{\Fix}{\mathrm{Fix}}

\newtheoremstyle{defn}{8pt}{4pt}{}{}{\bfseries\boldmath}{.}{0.3em}{} 
\newtheoremstyle{shplain}{8pt}{4pt}{\itshape}{}{\bfseries\boldmath}{.}{0.3em}{} 
\theoremstyle{shplain}
\newtheorem{theorem}{Theorem}[section]
\newtheorem{thm}{Theorem}
\newtheorem{corollary}[theorem]{Corollary}
\newtheorem{cor}[thm]{Corollary}
\newtheorem{proposition}[theorem]{Proposition}
\newtheorem{lemma}[theorem]{Lemma}
\theoremstyle{defn}

\newtheorem{rem}[thm]{Remark}
\newtheorem{example}[theorem]{Example}
\newtheorem{ex}[thm]{Example}
\newtheorem{question}[theorem]{Question}
\newtheorem{que}[thm]{Question}
\newtheorem{definition}[theorem]{Definition}
\newtheorem*{acknowledgements*}{Acknowledgements}

\usepackage{titlesec}
\titlespacing*{\section}{0pt}{\baselineskip}{0pt}
\titlespacing*{\subsection}{0pt}{0.66\baselineskip}{0pt}
\usepackage[shortlabels]{enumitem}
\setlist{leftmargin=0.8cm,topsep=0pt,itemsep=-2pt}
\setlist[enumerate]{label=\rm{(\roman*)}}
\numberwithin{equation}{section}
\usepackage[tableposition=top, figureposition=bottom, skip=0.5\baselineskip]{caption}

\renewenvironment{thebibliography}[1]
{ \begin{oldthebibliography}{#1}
  \setlength{\parskip}{0pt}
  \setlength{\itemsep}{2pt plus 0.3ex}
  \bgroup\footnotesize }
{ \egroup \end{oldthebibliography} }

\makeatletter
\renewenvironment{proof}[1][\proofname]{\par
  \pushQED{\qed}%
  \normalfont
  \topsep2pt \partopsep1pt 
  \trivlist
  \item[\hskip\labelsep
        \itshape
    #1\@addpunct{.}]\ignorespaces
}{%
  \popQED\endtrivlist\@endpefalse
  \addvspace{6pt plus 6pt}
}
\makeatother

\makeatletter
\g@addto@macro\normalsize{%
  \setlength\abovedisplayskip{0.4\baselineskip plus 0.4\baselineskip}
  \setlength\belowdisplayskip{0.4\baselineskip plus 0.4\baselineskip}
  \setlength\abovedisplayshortskip{-0.3\baselineskip}
  \setlength\belowdisplayshortskip{0.4\baselineskip plus 0.4\baselineskip}
}
\makeatother

\begin{document}

\begin{center} 
{\LARGE \textbf{Generating simple vigorous groups}}              \\[11pt]
{\Large Collin Bleak, Casey Donoven, Scott Harper \& James Hyde} \\[22pt]
\end{center}

\begin{center}
\begin{minipage}{0.85\textwidth}
\small 
The simple vigorous groups form a broad class of groups of homeomorphisms of Cantor space that includes Thompson's group $V$, its various generalisations and many others such as Nekrashevych’s groups of dynamical origin.
Bleak, Elliott and Hyde (2024) proved that every finitely generated simple vigorous group is $2$-generated, and, in this paper, we give several strong generation results for this class of simple groups. 
For example, we prove that if $G$ is a finitely generated simple vigorous group, then $G$ is generated by three involutions, $G$ is generated by an element of order $m$ and an element of order $n$ for any choice of $m \geq 2$ and $n \geq 3$, $G$ has a minimal generating set of size $k$ for all $k \geq 2$, every nontrivial element of $G$ is contained in a generating pair and the direct power $G^n$ is $2$-generated for all $n$.
These results are analogous to well-known results for finite simple groups, but of course the proofs in this context are quite different.
One consequence of our results is that Thompson's group $V$ is $(2,3)$-generated, which answers a question of Sapir (2017).
Another consequence is that every finitely generated group quasi-isometrically embeds in a $(2, 3)$-generated simple group, strengthening theorems of Hall (1974) and Bridson (1998).
All of our proofs are constructive, and we establish several new generation criteria for these groups, which we expect to be of wider interest, even just for Thompson's group $V$.
\par
\end{minipage}
\end{center}


\section{Introduction} 
\label{s:intro}

Bleak, Elliott and Hyde \cite{BleakElliottHyde24} studied the groups of homeomorphisms of Cantor space that satisfy the dynamical condition of being \emph{vigorous} (defined below). 
The class of simple vigorous groups includes Thompson's group $V$ and all known simple groups generalising $V$, such as the Higman--Thompson groups $V_n'$ and the Brin--Thompson groups $nV$, together with other topical families of groups such as Nekrashevych's simple groups of dynamical origin. 
These groups have some remarkable properties, with connections to the word problem  \cite{McKenzieThompson73, Thompson76}, group cohomology \cite{BrownGeoghegan85} and dynamical systems \cite{GhysSergiescu87}. 
Moreover, $V$ and its subgroup $T$ were the first known examples of infinite finitely presented simple groups.
In \cite{BleakElliottHyde24}, Bleak, Elliott and Hyde proved that every finitely generated simple vigorous group is $2$-generated.

In a different direction, in light of the Classification of Finite Simple Groups, every \emph{finite} simple group is known to be $2$-generated \cite{AschbacherGuralnick84}.
Except for some undiscovered sporadic groups, this was proved by Steinberg in 1962 \cite{Steinberg62}. 
Since then an influential line of research has emerged showing that the finite simple groups actually have some striking properties that are much stronger than just admitting a generating pair (see Burness' survey \cite{Burness19}). 
For example, Steinberg asked in his 1962 paper whether every nontrivial element of a finite simple group $G$ is contained in a generating pair, a property known as \emph{$\frac{3}{2}$-generation}.
This was proved by Guralnick and Kantor in 2000 \cite{GuralnickKantor00}, and Burness, Guralnick and Harper \cite{BurnessGuralnickHarper21} recently classified the finite $\frac{3}{2}$-generated groups.

In this paper, we tie together these two influential themes. 

Let $G \leq \Homeo(\C)$ be a group of homeomorphisms of Cantor space $\C$.
We say that $G$ is \emph{vigorous} if for all clopen subsets $A$ of $\C$ and all proper nonempty clopen subsets $B$ and $C$ of $A$,
there exists $\gamma \in G$ such that $B\gamma \subseteq C$ and $\gamma$ fixes $\C \setminus A$ pointwise.  
We prove that finitely generated simple vigorous groups share many of the strong generation properties of finite simple groups, such as $\frac{3}{2}$-generation mentioned above.
To avoid doubt, we note from the outset that all vigorous groups are infinite and nonabelian (see \cite[Theorem~3.4]{BleakElliottHyde24} for example).

Our first main result concerns generating sets consisting of finite order elements.
Let $m$ and $n$ be positive integers. 
A group $G$ is \emph{$(m, n)$-generated} if there exist $\alpha, \beta \in G$ where $\alpha$ has order $m$ and $\beta$ has order $n$ such that $G = \< \alpha, \beta \>$. 
For an $(m,n)$-generated group $G$, if $m = 1$ or $n = 1$, then $G$ is cyclic, and if $m = n = 2$, then $G$ is a (possibly infinite) dihedral group. 
In particular, if $G$ is an $(m,n)$-generated nonabelian simple group with $m \leq n$, then we necessarily have $m \geq 2$ and $n \geq 3$. 
Our first theorem establishes the converse for finitely generated simple vigorous groups.

\begin{thm} 
\label{thm:torsion}
Let $G \leq \Homeo(\C)$ be a finitely generated simple vigorous group. 
Let $m \geq 2$ and $n \geq 3$. 
Then $G$ is $(m, n)$-generated.
\end{thm}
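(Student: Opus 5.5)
We will build the two generators by hand, using the dynamical freedom that vigorousness gives, rather than by counting. The approach rests on a generation criterion of the following kind. Suppose $H \leq G$ is a subgroup, and there is a nonempty proper clopen set $U$ such that $H$ contains the rigid stabiliser $G_U = \{ g \in G : g \text{ fixes } \C\setminus U \text{ pointwise}\}$, together with enough elements to move $U$ around. Then $H = G$. The reason is as follows. Since $G$ is finitely generated, the result of \cite{BleakElliottHyde24} lets us write $G$ as generated by finitely many elements, each supported on a small clopen set. These can be conjugated into $G_U$ by elements of $H$, and simplicity together with the fact that the normal closure of $G_U$ is all of $G$ then finishes the argument. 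Concretely, the first step is to show that if $H$ contains $G_U$ for some proper clopen $U$, and $H$ acts so that the translates $U h$ ($h \in H$) cover $\C$ in a suitably overlapping way, then $H \geq \<G_{U'} : U' \text{ proper clopen}\>$. This last group equals $G$ because it is a nontrivial normal subgroup of the simple group $G$.

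The second step is to choose $\alpha$ of order $m$ and $\beta$ of order $n$.
\begin{itemize}
\item Partition $\C$ into clopen pieces. Let $\alpha$ permute $m$ disjoint clopen sets $A_1,\dots,A_m$ cyclically, acting as the identity elsewhere or padded so that its order is exactly $m$. The required homeomorphisms between the pieces exist by vigorousness.
\item Let $\beta$ permute $n$ disjoint clopen sets $B_1,\dots,B_n$ cyclically, with $n \geq 3$.
\item Arrange the $A_i$ and $B_j$ to overlap in a chain pattern, so that $\<\alpha,\beta\>$ acts on the partition into atoms transitively and with enough "room".
\end{itemize}
The key device is to embed finitely many generators of $G$, each compressed into a tiny clopen set $W$, into the action of $\beta$ on $W \subseteq B_1$. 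For example, we take $\beta$ to be $\beta_0 g$, where $g$ is supported inside $B_1$. Then a commutator or power such as $\beta^n$, or a product like $[\alpha,\beta]^k$, isolates an element supported in a small set. The asymmetry $n \geq 3$ is what allows a word to kill the permutation part while retaining the payload: $\beta^n$ equals the product of the payload pieces conjugated around the cycle, which is awkward. Instead we use commutators $[\beta,\beta^{\alpha}]$ whose supports intersect in only one atom. The flexibility here comes from $n \geq 3$, since the $B$-cycle has a piece not meeting the $A$'s.

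The third step makes the extraction precise. We choose the supports so that, for some word $w(\alpha,\beta)$, the element $w$ is supported on a clopen $V$ and restricts to a prescribed element there. Then, by the criterion of \cite{BleakElliottHyde24} for finitely generated vigorous groups, a single element with rich enough support, together with its conjugates under $\<\alpha,\beta\>$, generates some $G_U$. Here I would like to find, and assume, an earlier-available lemma: if $H$ contains an element acting as a "vigorous" transposition-like swap of two small clopen sets, and $H$ is transitive on small sets, then $H$ contains $G_U$.

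I expect the main obstacle to be keeping the orders exactly $m$ and $n$ while packing the payload. Inserting $g$ into $\beta$ changes its order unless $g$ commutes appropriately, so the payload must be inserted in the form $\beta = \beta_0$ conjugated by a compactly supported element. Equivalently, the payload lives in how the cycle pieces are glued: $\beta|_{B_n \to B_1} = h$ with $\beta|_{B_i \to B_{i+1}} = \mathrm{id}$-like maps, arranged so that $\beta^n = 1$. I would handle this first by realising $\beta$ as a conjugate of a standard order-$n$ element by a product of the desired generators. Then words in $\alpha$ and $\beta$ recover those generators, using overlaps of the $A$- and $B$-supports to separate them.
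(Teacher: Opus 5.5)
Your Step~1 is essentially right: it amounts to repeated use of Lemma~\ref{lem:union} along a chain of overlapping translates. But Steps~2--3 have a genuine gap. You never produce $\alpha,\beta$ of \emph{exact} orders $m,n$ together with words in them that recover a generating set of some rigid stabiliser, and the mechanisms you float do not work.

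\begin{enumerate}
\item The twisted-gluing idea fails. If $\beta$ cycles $B_1\to\cdots\to B_n\to B_1$ with identity-like gluings except a twist $h$ on $B_n\to B_1$, then $\beta^n$ acts on $B_1$ as a copy of $h$. So $\beta^n=1$ forces $h=1$.
\item Writing $\beta=\beta_0^g$ only relocates the problem. You do not say which word in $\alpha,\beta$ returns $g$, nor why that word would be supported in (or at least stabilise and act as prescribed on) a fixed clopen set.
\item The lemma you propose to assume is false in this generality. In Example~\ref{ex:strong_infinite}, $G=V_2(A)'$ is a finitely generated simple vigorous group containing $H=V$. Here $V$ contains the swap $(00\ 01)$ and is transitive on proper nonempty clopen sets. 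Yet $V$ contains no $G_U$: every nonempty clopen $U$ contains a cone $u\C$, and $[(u0\ u1),\alpha_{[u0]}]\in G_U\setminus V$.
\end{enumerate}
Any criterion that ignores the particular group $G$ fails in this way. The finite generation of $G$ can only enter through a payload that is explicitly recovered.

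The missing construction is Definition~\ref{def:technical} and Proposition~\ref{prop:technical}, and it has three ingredients. Start from $\sigma,\tau$ generating $G_C$ with $\sigma\tau$ of small support and order $m$ and $[\sigma^r,\tau^r]\neq 1$; Proposition~\ref{prop:torsion} applied to $G_C$ supplies these.

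\begin{enumerate}
\item \emph{Cancelling holonomy.} Put $\tau$ and $\tau^{-1}$ (respectively $\sigma$ and $\sigma^{-1}$) on consecutive gluings of an $m$-cycle of $\alpha$. Going once round the cycle then gives $\tau\tau^{-1}=1$, so $|\alpha|=m$, while $\beta$ is a pure product of three $n$-cycles of copies of $C$.
\item \emph{Overlap breaks the cancellation.} In the notation of Definition~\ref{def:technical}, $\alpha$ sends $C\gamma_{12}$ to $C\gamma_{11}$ via $\tau$, and $\beta$ sends $C\gamma_{11}$ straight back. So $\alpha\beta$ acts as $\tau$ on $C\gamma_{12}$, and likewise as $\sigma$ on $C\gamma_{22}$. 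Explicit conjugates of $\alpha\beta$ then stabilise $C$ and restrict to $\sigma|_C$ and $\tau|_C$. Hence the setwise stabiliser of $C$ in $\langle\alpha,\beta\rangle$ induces all of $G_C|_C$.
\item \emph{Isolation.} Those conjugates are not supported in $C$, so one takes $r$-th powers with $r=r(m,n)$. One has $(\alpha\beta)^r=(\tau^{\gamma_{12}}\sigma^{\gamma_{22}})^r$, so everything outside $C\gamma_{12}\sqcup C\gamma_{22}$ dies; this is where $|\sigma\tau|=m$ and the small support of $\sigma\tau$ are used. The commutator of the two powered conjugates is exactly $[\sigma^r,\tau^r]$, a nontrivial element of $G_C$.
\end{enumerate}

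Simplicity of $G_C$ together with the second ingredient then gives $G_C\leq\langle\alpha,\beta\rangle$. Only at that point does your Step~1 finish the proof.
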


A group is $(2, 3)$-generated if and only if it is a quotient of the modular group $\mathrm{PSL}_2(\mathbb{Z}) = C_2 \star C_3$, so particular attention has been given to this case. 
For example, Sapir asked whether Thompson's group $V$ is $(2,3)$-generated \cite{Sapir} (see also \cite[Question~6.9]{BleakElliottHyde24}).
Since $V$ is a finitely generated simple vigorous group, Theorem~\ref{thm:torsion} answers this question affirmatively.

\begin{cor}
\label{cor:torsion_v_intro}
Thompson's group $V$ is $(2, 3)$-generated.
\end{cor}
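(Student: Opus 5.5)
The plan is to reduce $(m,n)$-generation to a generation criterion for vigorous simple groups. Such a criterion should say that a subgroup $H \leq G$ equals $G$ once $H$ contains "enough" local dynamics. Concretely, I would first prove a lemma along the following lines. Suppose $G$ is a finitely generated simple vigorous group with a finite generating set $g_1,\dots,g_k$, and let $H\leq G$. If there is a proper clopen set $U$ such that $H$ contains the rigid stabiliser of $U$ intersected with some vigorous subgroup, and $H$ acts on $\C$ so that translates of $U$ cover $\C$ in a controlled way, then $H=G$. The model is the argument of \cite{BleakElliottHyde24}: elements supported on small clopen sets can be conjugated around by vigorousness. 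Simplicity, via commutator tricks of Higman/Epstein type, shows that the normal closure of any nontrivial element supported in a small set is all of $G$. In particular, $G$ is generated by elements with support in any prescribed nonempty clopen set, up to finitely many conjugates. So I would first record that there is a finite set of elements $s_1,\dots,s_r$, each supported in a small clopen set $D_i$ (all $D_i$ pairwise disjoint and avoiding a fixed clopen region), which together with a few "transporter" elements generate $G$.

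Next, I would build $\alpha$ of order $m$ and $\beta$ of order $n$ explicitly. Choose pairwise disjoint clopen sets and use vigorousness to produce an element $\beta$ of order $n$ cyclically permuting $n$ disjoint clopen pieces $B_0,\dots,B_{n-1}$ of a partition of $\C$ (possibly with a fixed part). Vigorous groups contain such finite-order "rotations", obtained by composing transpositions of clopen sets: vigorousness gives homeomorphisms mapping one clopen set into another, and these can be upgraded to swaps of disjoint clopen sets. For $\alpha$ I take a product of $m$-cycles of clopen sets that overlaps the $B_i$ in a chain-like pattern, together with commuting pieces carrying the encoded generators $s_i$ on otherwise idle regions. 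For example, $\alpha$ acts on a region $E$ by an $m$-cycle of clopen sets $E_0,\dots,E_{m-1}$ composed with a "twist" that is trivial overall after $m$ steps but not on individual pieces. The idea is that commutators of the form $[\alpha^{\beta^j},\alpha]$, or suitable powers, have support concentrated in a small region and reproduce the $s_i$ up to conjugacy. The requirement $n\geq3$ is used here: with three or more pieces, $\beta$ moves a region away from and back without inverse collapse, so $\alpha$ and $\alpha^\beta$ have overlapping but distinct supports. For $m=2$ the construction uses involutions built from clopen swaps, each encoding $s_i$ by the twisted swap $x\mapsto x s_i$ on one piece and $x\mapsto x s_i^{-1}$ on its partner. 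This is always an involution, so arbitrary $s_i$ can be loaded into an order-$2$ element.

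Finally, I would verify that $H=\langle\alpha,\beta\rangle$ contains the $s_i$ and the transporters, and hence equals $G$ by the first lemma. The main obstacle is this extraction step. From $\alpha$ and $\beta$ one must isolate each individual $s_i$ (not just a conjugate of a product), using commutators with elements of disjoint support to kill unwanted pieces. The approach I would try first is to place the encoded pieces in positions so that a specific word (e.g.\ $[\alpha,\alpha^{\beta}]$) has support meeting only one pair of pieces. Then I would use the fact that $H$ contains an element moving that region into a "clean" region to separate and recover $s_i$. I expect some care with small cases such as $m=2,n=3$, where there is little room, perhaps handled by a separate refinement of the partition.
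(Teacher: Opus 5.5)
\textbf{There is a genuine gap: the extraction step is unresolved.} You correctly flag extraction as the main obstacle, but it is essentially the whole proof. You also defer the case $(m,n)=(2,3)$, which is exactly the statement at hand.

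The mechanisms you suggest do not work as described. For involutions, $[\alpha,\alpha^\beta]=(\alpha\alpha^\beta)^2$. Suppose a cycle of $\alpha\alpha^\beta$ runs through both halves of one of your twisted swaps (twist $s_i$ leaving one piece, $s_i^{-1}$ leaving its partner) and meets no other twist. Then its return map is $s_is_i^{-1}=1$, so nothing of $s_i$ survives beyond a conjugate of a plain permutation of pieces. In other configurations you get uncontrolled products of several $s_j^{\pm1}$.

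Since $|\beta|=3$, conjugating by powers of $\beta$ gives only $\alpha,\alpha^\beta,\alpha^{\beta^2}$. So the shift-and-commutator isolation in the proof of Proposition~\ref{prop:torsion}, which conjugates by many powers of an element of large order, is unavailable.

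Killing unwanted pieces by commutators presupposes elements of $\langle\alpha,\beta\rangle$ with controlled small support, which is what you are trying to produce. Killing them by powers needs the leftover return maps to be periodic, and there is no reason to expect that for an arbitrary encoded generating set $s_1,\dots,s_r$. (Separately, in a general vigorous group swaps of clopen sets need not exist, since Lemma~\ref{lem:existence} requires a parity condition; this is harmless for $V$.)

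\textbf{The missing idea} is the content of Proposition~\ref{prop:technical}, from which the statement follows via Theorem~\ref{thm:torsion}. Look at the product $\alpha\beta$, and place the twisted $m$-cycles of $\alpha$ against the $n$-cycles of $\beta$ so that the two halves of each twist fall into \emph{different} $\alpha\beta$-cycles. Then $\alpha\beta$ stabilises $C\gamma_{12}$ and $C\gamma_{22}$ and acts there as $\tau$ and $\sigma$. The inverse twists are swept into one long cycle whose return map on $D$ is $(\sigma\tau)^{-1}$.

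One encodes only a generating pair $(\sigma,\tau)$ of the simple local group $G_C$. This pair is chosen via Proposition~\ref{prop:torsion} so that $\sigma\tau$ has finite order $m$ and small support, and $[\sigma^r,\tau^r]\neq1$. The argument then runs as follows.
\begin{enumerate}
\item $(\alpha\beta)^r=(\tau^{\gamma_{12}}\sigma^{\gamma_{22}})^r$, which kills all other dynamics.
\item The commutator of two suitable conjugates of $(\alpha\beta)^r$ is the genuine nontrivial element $[\sigma^r,\tau^r]\in G_C$.
\item The corresponding conjugates of $\alpha\beta$ stabilise $C$ and induce $\sigma,\tau$ on $C$.
\item Hence the normal closure of $[\sigma^r,\tau^r]$ in $G_C$ lies in $\langle\alpha,\beta\rangle$, and it equals $G_C$ by simplicity.
\end{enumerate}
You never need to isolate individual generators exactly. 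Lemma~\ref{lem:union} with a connectivity argument then spreads $G_C$ to $G$. For $V$, Corollary~\ref{cor:torsion_v} realises this explicitly with $C=010\C$.
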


All our proofs are constructive.
In particular, Section~\ref{s:torsion} describes generators that witness Theorem~\ref{thm:torsion}, and the case $(m, n) = (2, 3)$ is illustrated in Figure~\ref{fig:torsion} in that section.
For Thompson's group $V$, we can be even more explicit as the following example shows.

\begin{ex} 
\label{ex:torsion}
In this example, we make use of the permutation notation for torsion elements of $V$ (see Section~\ref{ss:prelims_generation_v}).
By Corollary~\ref{cor:torsion_v}, we have $|\alpha| = 2$, $|\beta| = 3$ and $V = \< \alpha, \beta \>$ for the elements
\begin{gather*}
\alpha = (00 \ 010100)(010110 \ 010111) (100 \ 0110)(110 \ 0111) \cdot (\gamma_{[101]})^{-1} \gamma_{[111]} (101 \ 111) \\
\beta = (01 \ 10 \ 11).
\end{gather*}
Here $\gamma_{[101]}$ and $\gamma_{[111]}$ are the elements of $V$ with support contained in $101\C$ and $111\C$, respectively, that correspond to the element $\gamma = \phi\psi \in V$ with $\phi, \psi \in V$ defined as
\begin{gather*}
\phi = (0000 \ 010 \ 00011 \ 00010)(0010 \ 011 \ 0011) \cdot (000 \ 001) \\
\psi = (1000 \ 10010 \ 10011 \ 110)(1010 \ 1011 \ 111).
\end{gather*}
The authors are not aware of a simpler pair of elements $\alpha, \beta \in V$ that generate $V$ and satisfy $|\alpha| = 2$ and $|\beta| = 3$.
\end{ex}

As already noted, a nonabelian simple group cannot be generated by two involutions, but when we prove Theorem~\ref{thm:torsion}, we actually show that the generator of order $n$ can be chosen to be the product of two involutions, so with $m = 2$ we obtain the following.

\begin{thm} 
\label{thm:involutions}
Every finitely generated simple vigorous group is generated by three involutions.
\end{thm}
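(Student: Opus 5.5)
The plan is to deduce Theorem~\ref{thm:involutions} from the proof of Theorem~\ref{thm:torsion} in the case $m = 2$, $n = 3$ (any $n \geq 3$ would serve). The aim is to construct $\alpha, \beta \in G$ with $|\alpha| = 2$, $|\beta| = n$ and $G = \<\alpha,\beta\>$, where in addition $\beta = \sigma\tau$ for involutions $\sigma, \tau \in G$. Then $G = \<\alpha,\beta\> \leq \<\alpha,\sigma,\tau\> \leq G$, so $G$ is generated by the three involutions $\alpha$, $\sigma$, $\tau$. Since every element of order $n$ in a dihedral group $D_{2n}$ is a product of two reflections, it suffices to arrange that $\beta$ lies in a finite dihedral subgroup of $G$ that is generated by two involutions.

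To build such a $\beta$, I will use vigour to partition $\C$ into clopen sets $X_0, X_1, \ldots, X_{n-1}$ (possibly together with a remaining clopen set $Y$) and to choose homeomorphisms in $G$ that carry $X_i$ onto $X_{i+1}$, indices taken mod $n$. This gives an element $\beta$ that cyclically permutes the pieces, as in the element $(01 \ 10 \ 11)$ of Example~\ref{ex:torsion}. I will fix an identification of all the $X_i$ with a single clopen set via these maps, so that $\beta$ acts as the rotation $i \mapsto i+1$. The involution $\sigma$ is then defined as the reflection $i \mapsto -i$, transported through the same identifications, and $\tau = \sigma\beta$. A direct check shows that $\tau$ is the reflection $i \mapsto 1-i$ and is an involution. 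Both $\sigma$ and $\tau$ are products of disjoint "swap" homeomorphisms between pieces; each such swap is an involution because it is of the form $x \mapsto x\gamma$ on $X_i$ and $x \mapsto x\gamma^{-1}$ on $X_j$. For $\sigma$ and $\tau$ to lie in $G$, each swap must be realisable in $G$. I expect to get this from the vigour and simplicity of $G$, via the results of \cite{BleakElliottHyde24} that such $G$ contain all "transpositions" of disjoint clopen sets compatible with $G$-germs; this is exactly how elements like $(101 \ 111)$ are used in Example~\ref{ex:torsion}.

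The main obstacle is generation: showing that $\<\alpha, \beta\> = G$ when $\beta$ is this rigid dihedral-type element and $\alpha$ is an involution. For this step I will follow the construction in Section~\ref{s:torsion}. There, $\alpha$ is chosen as an involution that swaps pieces between the $X_i$ and $Y$, and that carries on disjoint small clopen sets copies of a finite generating set of $G$ (as with the $\gamma_{[101]}$ factors in Example~\ref{ex:torsion}). Then conjugates of $\alpha$ by powers of $\beta$, and commutators of these, isolate the copies of the generators. This uses the generation criteria proved in Section~\ref{s:torsion}. The only new point is to check that the $\beta$ constructed there can be taken to be $\sigma\tau$ as above. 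I expect this to hold because the $\beta$ in that construction is a finite-order element permuting clopen pieces, and such an element always admits the reflection decomposition described above.
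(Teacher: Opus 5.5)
Your route is the paper's. Theorem~\ref{thm:involutions} comes from the generating pair $(\alpha,\beta)$ of Definition~\ref{def:technical} with $m = 2$ (Proposition~\ref{prop:technical}(vi)), by writing the permutation-enacting element $\beta$ as a product of two involutions of $G$ (Proposition~\ref{prop:technical}(vii), Theorem~\ref{thm:torsion_involutions_strong}(ii)).

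The one step that needs an argument is that your two reflections lie in $G$, and your justification for it is false. A simple vigorous group need not contain the element enacting a transposition of two disjoint clopen sets in the same $G$-orbit. For instance, in the Higman--Thompson group $V_3'$ the swap $(u \ v)$ of two disjoint cones is not an element. For odd valency, the sign of the permutation of leaves (ordered left to right) induced by a tree pair is a well-defined homomorphism $V_3 \to \Z/2$ that vanishes on $V_3'$, and $(u \ v)$ is odd. This is exactly the obstruction recorded in Lemma~\ref{lem:existence}: an enacting element is guaranteed to lie in $G$ only if the permutation is even \emph{or} $[U]_G$ is even.

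The obstruction is not vacuous here. For $n = 3$, the reflection $i \mapsto -i$ is a single transposition on each of the three $3$-cycles of $\beta$, so it enacts an odd permutation of the nine pieces $C\gamma_{ij}$; the same holds for $i \mapsto 1-i$. So your closing claim, that a finite-order element permuting clopen pieces always admits the reflection decomposition, holds in $\Homeo(\C)$ but not in $G$ without a parity hypothesis.

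The repair is the one the paper uses. Definition~\ref{def:technical} assumes $[C]_G$ is even, so Lemma~\ref{lem:existence} puts in $G$ the element enacting \emph{any} permutation of the pieces $C\gamma_x$. In particular both your reflections lie in $G$, and their product enacts, and hence equals, $\beta$. (Alternatively, taking $n = 5$ makes both reflections even permutations of the pieces.) With this correction your argument is complete.

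The paper's more elaborate decomposition $\beta = \gamma\delta$ splits $C = C_1 \sqcup C_2$ with $[C_1]_G$ even and reflects the two parts differently in the first cycle. That serves only to make $\{\alpha,\gamma,\delta\}$ strongly minimal, as needed for Theorem~\ref{thm:minimal_strong}. For Theorem~\ref{thm:involutions} alone, your plain reflection decomposition suffices once the parity point is fixed.
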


The special case of Theorem~\ref{thm:involutions} for Higman--Thompson groups was recently independently proved in \cite{ScheslerSkipperWu} using a different method.  
While they show that those groups can be generated by three involutions, the construction in \cite{ScheslerSkipperWu} does not give $(2, 3)$-generation.

\begin{ex} 
\label{ex:involutions}
For Thompson's group $V$ we can explicitly give three very simple involutions.
As proved in Section~\ref{ss:prelims_generation_v}, we have $|\alpha| = |\beta| = |\gamma| = 2$ and $V = \< \alpha, \beta, \gamma \>$ where
\[
\alpha = (000 \ 001)(010 \ 011)(10 \ 11) \quad \beta = (001 \ 010)(10 \ 11) \quad \gamma = (000 \ 10)(001 \ 110)(010 \ 111).
\]
The tree pairs for these three involutions are given in Figure~\ref{fig:tree_pairs}.
\end{ex}

\begin{figure}
  \[
  \begin{tikzpicture}[
      inner sep=0pt,
      baseline=-30pt,
      level distance=20pt,
      level 1/.style={sibling distance=60pt},
      level 2/.style={sibling distance=30pt},
      level 3/.style={sibling distance=15pt}
    ]
    \node (root) [circle,fill] {}
    child {node (0) [circle,fill] {}
      child {node (00) [circle,fill] {}
        child {node (000) {$1$}}
        child {node (001) {$2$}}}
      child {node (01) [circle,fill] {}
      	child {node (010) {$3$}}
      	child {node (011) {$4$}}}}
    child {node (1) [circle,fill] {}
      child {node (10) {$5$}}
      child {node (11) {$6$}}};
  \end{tikzpicture}
  \ \xrightarrow{\,\alpha\,} \
  \begin{tikzpicture}[
      inner sep=0pt,
      baseline=-30pt,
      level distance=20pt,
      level 1/.style={sibling distance=60pt},
      level 2/.style={sibling distance=30pt},
      level 3/.style={sibling distance=15pt}
    ]
    \node (root) [circle,fill] {}
    child {node (0) [circle,fill] {}
      child {node (00) [circle,fill] {}
        child {node (000) {$2$}}
        child {node (001) {$1$}}}
      child {node (01) [circle,fill] {}
      	child {node (010) {$4$}}
      	child {node (011) {$3$}}}}
    child {node (1) [circle,fill] {}
      child {node (10) {$6$}}
      child {node (11) {$5$}}};
  \end{tikzpicture}
  \]
\vspace{11pt}
  \[
  \begin{tikzpicture}[
      inner sep=0pt,
      baseline=-30pt,
      level distance=20pt,
      level 1/.style={sibling distance=60pt},
      level 2/.style={sibling distance=30pt},
      level 3/.style={sibling distance=15pt}
    ]
    \node (root) [circle,fill] {}
    child {node (0) [circle,fill] {}
      child {node (00) [circle,fill] {}
        child {node (000) {$1$}}
        child {node (001) {$2$}}}
      child {node (01) [circle,fill] {}
      	child {node (010) {$3$}}
      	child {node (011) {$4$}}}}
    child {node (1) [circle,fill] {}
      child {node (10) {$5$}}
      child {node (11) {$6$}}};
  \end{tikzpicture}
  \ \xrightarrow{\,\beta\,} \
  \begin{tikzpicture}[
      inner sep=0pt,
      baseline=-30pt,
      level distance=20pt,
      level 1/.style={sibling distance=60pt},
      level 2/.style={sibling distance=30pt},
      level 3/.style={sibling distance=15pt}
    ]
    \node (root) [circle,fill] {}
    child {node (0) [circle,fill] {}
      child {node (00) [circle,fill] {}
        child {node (000) {$1$}}
        child {node (001) {$3$}}}
      child {node (01) [circle,fill] {}
      	child {node (010) {$2$}}
      	child {node (011) {$4$}}}}
    child {node (1) [circle,fill] {}
      child {node (10) {$6$}}
      child {node (11) {$5$}}};
  \end{tikzpicture}
  \]
\vspace{11pt}
  \[
  \begin{tikzpicture}[
      inner sep=0pt,
      baseline=-30pt,
      level distance=20pt,
      level 1/.style={sibling distance=60pt},
      level 2/.style={sibling distance=30pt},
      level 3/.style={sibling distance=15pt}
    ]
    \node (root) [circle,fill] {}
    child {node (0) [circle,fill] {}
      child {node (00) [circle,fill] {}
        child {node (000) {$1$}}
        child {node (001) {$2$}}}
      child {node (01) [circle,fill] {}
      	child {node (010) {$3$}}
      	child {node (011) {$4$}}}}
    child {node (1) [circle,fill] {}
      child {node (10) {$5$}}
      child {node (11) [circle,fill] {}
      	child {node (110) {$6$}}
      	child {node (111) {$7$}}}};
  \end{tikzpicture}
  \ \xrightarrow{\,\gamma\,} \
  \begin{tikzpicture}[
      inner sep=0pt,
      baseline=-30pt,
      level distance=20pt,
      level 1/.style={sibling distance=60pt},
      level 2/.style={sibling distance=30pt},
      level 3/.style={sibling distance=15pt}
    ]
    \node (root) [circle,fill] {}
    child {node (0) [circle,fill] {}
      child {node (00) [circle,fill] {}
        child {node (000) {$5$}}
        child {node (001) {$6$}}}
      child {node (01) [circle,fill] {}
      	child {node (010) {$7$}}
      	child {node (011) {$4$}}}}
    child {node (1) [circle,fill] {}
      child {node (10) {$1$}}
      child {node (11) [circle,fill] {}
      	child {node (110) {$2$}}
      	child {node (111) {$3$}}}};
  \end{tikzpicture}
  \]
\caption{The generating involutions $\alpha, \beta, \gamma \in V$ given in Example~\ref{ex:involutions}.}
\label{fig:tree_pairs}
\end{figure}

\begin{rem}
\label{rem:torsion_finite}
Let us compare Theorems~\ref{thm:torsion} and~\ref{thm:involutions} with what is known for finite simple groups. 
Combining several results \cite{LiebeckShalev96, LubeckMalle99, Miller01}, every sufficiently large nonabelian finite simple group is $(2, 3)$-generated except the symplectic groups $\mathrm{PSp}_4(2^f)$ and $\mathrm{PSp}_4(3^f)$ and the Suzuki groups ${}^2B_2(2^f)$, which are all $(2, 5)$-generated, and a conjecture of Conder, which has seen recent attention, asserts that every nonabelian finite simple group is, in fact, $(2, 3)$-, $(2, 5)$-, or $(2,7 )$-generated (see, for example, \cite[Conjecture~2.9]{Burness19}). 
Every finite simple group other than the unitary group $\mathrm{PSU}_3(3)$ is also known to be generated by three involutions \cite{MalleSaxlWeigel94}. 
\end{rem}

Combining Theorems~\ref{thm:torsion} and~\ref{thm:involutions}, a finitely generated simple vigorous group can be generated by elements of orders $m_1, \dots, m_k$, all at least two, provided that $k \geq 2$ and $\{ m_1, m_2 \} \neq \{ 2 \}$ when $k = 2$.
However, we can say more.
A generating set $X$ of a group $G$ is \emph{minimal} if no proper subset of $X$ generates $G$. 
If $G$ is finitely generated, then every minimal generating set of $G$ is finite, and we write $d(G)$ and $m(G)$ for the infimum and supremum of the sizes of minimal generating sets of $G$. 
By Tarski's Irredundant Basis Theorem \cite{Tarski75}, for every integer $k$ satisfying $d(G) \leq k \leq m(G)$ there is a minimal generating set of $G$ of size $k$. 
For finite simple groups $G$, it is well known that $d(G) \leq 2$, but $m(G)$ is much harder to analyse (e.g. Whiston's proof that $m(A_n) = n - 2$ \cite{Whiston00} uses the Classification of Finite Simple Groups, see \cite{Harper23} for further information). 
In contrast, for finitely generated simple vigorous groups $G$, we know $m(G)$.
Indeed, we have the following, which subsumes Theorems~\ref{thm:torsion} and~\ref{thm:involutions}.

\begin{thm}
\label{thm:minimal}
Let $G \leq \Homeo(\C)$ be a finitely generated simple vigorous group. 
Let $k \geq 2$ and let $m_1, \dots, m_k \geq 2$ be integers where $\{ m_1, m_2 \} \neq \{ 2 \}$ if $k = 2$.
Then $G$ has a minimal generating set $\{ \alpha_1, \dots, \alpha_k \}$ of size $k$ where $|\alpha_i| = m_i$ for all $1 \leq i \leq k$.
In particular, $m(G) = \infty$.
\end{thm}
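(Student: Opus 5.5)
The proof will be constructive and will rest on a generation criterion for finitely generated simple vigorous groups, which I expect to prove first. The criterion I aim for is the following. Let $H \leq G$, and suppose there is a proper nonempty clopen set $A$ such that $H$ contains the rigid stabiliser $G_A$ (elements supported in $A$). Suppose further that the $H$-translates of $A$ cover $\C$ and "connect" in a suitable sense. Then $H = G$.

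The reasoning behind this is as follows. By \cite{BleakElliottHyde24}, simple vigorous groups are generated by elements of small support. Vigorousness lets us conjugate any element of small support into $G_A$ using an element of $H$, or a product of such elements. Finite generation means only finitely many such elements are needed. So the plan reduces to building explicit elements $\alpha_1,\dots,\alpha_k$ of prescribed orders whose generated subgroup contains some $G_A$.

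For the construction, I would choose pairwise disjoint clopen sets $U_1, \dots, U_k$ and arrange each $\alpha_i$ to be a finite-order homeomorphism permuting a partition into clopen pieces. The $\alpha_i$ should be designed to:
\begin{enumerate}
\item generate, via commutators of overlapping supports, all of $G_{A}$ for some small $A$;
\item move $A$ around enough to satisfy the covering hypothesis.
\end{enumerate}
For (i), I would use the 2-generation of $G$ (indeed of $G_B$ for clopen $B$, which is again a finitely generated simple vigorous group on $B$). Two generators of $G_B$ can be built into $\alpha_1,\alpha_2$ as "partial" pieces: for instance, $\alpha_i = \sigma_i \cdot (\gamma_i)_{[B_i]}$, where $\sigma_i$ is a torsion permutation of clopen sets of the required order and $(\gamma_i)_{[B_i]}$ is a copy of the generator $\gamma_i$ supported on a $\sigma_i$-invariant piece. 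A suitable power or commutator then isolates the copy, exactly as in Example~\ref{ex:torsion}. The orders need care: I would take $\alpha_i$ to act as an $m_i$-cycle on clopen blocks, with the copy of $\gamma_i$ distributed as $\gamma_i, \gamma_i^{-1}$ across blocks so that the total order stays $m_i$. The condition $\{m_1,m_2\}\neq\{2\}$ is needed so that the first two elements alone do not generate a dihedral group, since the minimality argument below must use them.

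For minimality, the natural approach is to make $\alpha_3,\dots,\alpha_k$ each responsible for "connecting" a distinct region $U_i$ to the rest, so that omitting $\alpha_i$ leaves a proper invariant clopen set or a nontrivial block system. Concretely, the subgroup generated by the remaining elements preserves some proper nonempty clopen set, and hence is proper. For $\alpha_1$ and $\alpha_2$, I would ensure that dropping either leaves a subgroup that is either abelian modulo the others or preserves a set. Once $k$ is arbitrary, $m(G)=\infty$ follows immediately.

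I expect the main obstacle to be twofold. First, the generation criterion must be proved with hypotheses checkable from the explicit construction. Second, in the base case $k=2$, with orders like $(2,3)$, the two elements must simultaneously encode both generators of $G_B$ and the covering dynamics; there the chain-of-sets argument will be delicate, and I would try a chain $A_0,A_1,\dots$ of overlapping clopen sets moved by $\beta$ cyclically.
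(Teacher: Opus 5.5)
\textbf{There is a genuine gap: the mechanism that produces $G_A$ from the torsion generators is missing, and one case of the statement is not covered.}

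You say that ``a suitable power or commutator then isolates the copy'', but this is the step that carries all the weight. In your design the twists $\gamma_i$ and $\gamma_i^{-1}$ cancel around the $m_i$-cycle. So every power of $\alpha_i$ is either trivial or moves every block of the cycle, and none of them exhibits $\gamma_i$. The copy only becomes visible in mixed words such as $\alpha\beta$, and those words also carry leftover twisting on other blocks. That twisting must be removed before you have an element supported in one block, and bare $2$-generation of $G_B$ gives you no control to do this.

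In the paper both generators are packed into one element, $\alpha=\tau^{-\gamma_{11}}\tau^{\gamma_{12}}\sigma^{-\gamma_{21}}\sigma^{\gamma_{22}}\pi_1\pi_2\pi_3$, while $\beta$ is an untwisted block permutation. Then $\alpha\beta$ acts as $\tau^{\gamma_{12}}$ on $C\gamma_{12}$ and as $\sigma^{\gamma_{22}}$ on $C\gamma_{22}$. Elsewhere it is a long cycle whose return map on $D$ is $\tau^{-1}\sigma^{-1}$. The pair $(\sigma,\tau)$ is taken in $\S_{m,r}(G_C)$, which supplies two things:
\begin{enumerate}
\item $\sigma\tau$ has order exactly $m$ and small support. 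This kills the junk, so $(\alpha\beta)^r=(\tau^{\gamma_{12}}\sigma^{\gamma_{22}})^r$; the small support is what leaves room for the sets $E\gamma_y$ inside $C$.
\item $[\sigma^r,\tau^r]\neq 1$. So the commutator of the $r$th powers of two suitable conjugates of $\alpha\beta$ is a \emph{nontrivial} element supported in $C$.
\end{enumerate}
A normal-closure step then finishes, and you do not mention it either. The setwise stabiliser of $C$ in $\<\alpha,\beta\>$ induces all of $(G_C)|_C$ on $C$, so simplicity of $G_C$ gives $G_C\leq\<\alpha,\beta\>$. Pairs with these properties come from the refined Proposition~\ref{prop:torsion}, not from $2$-generation alone. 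For an arbitrary generating pair the leftover twisting need not die under powers, and the commutator that survives may be trivial.

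You have also misread the hypothesis: $\{m_1,m_2\}\neq\{2\}$ is imposed only when $k=2$. When $k\geq 3$ and $m_1=\cdots=m_k=2$ (for instance three involutions, Theorem~\ref{thm:involutions}), any two of the $\alpha_i$ generate a dihedral group. So a scheme in which two of them generate some $G_A$ and the rest merely connect regions cannot work in that case. The paper handles it by writing the order-$n$ element $\beta$ of a $(2,n)$ pair as a product $\gamma\delta$ of two involutions. This is done after splitting $C=C_1\sqcup C_2$ across $D=\Fix_C(\beta)$, with the choice made so that:
\begin{enumerate}
\item $\gamma,\delta$ fix the set $E$ of Definition~\ref{def:technical};
\item $\alpha,\gamma$ fix $C_2\cap D$;
\item $\alpha,\delta$ fix $C_1\cap D$.
\end{enumerate}

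This shows the right invariant to carry: every proper subset of the generating set fixes a nonempty clopen set pointwise. Your criterion for $\alpha_1,\alpha_2$ (``abelian modulo the others'') is neither verifiable nor easy to propagate. Strong minimality, by contrast, makes the induction from $k-1$ to $k$ easy. Given such a sequence for $G_D$, add one $m_k$-cycle on translates $C\gamma_1,\dots,C\gamma_{m_k}$, with all but one lying outside $D$. Put the last one inside a clopen set $E'\subsetneq D$ that misses part of each witness set $A_i$. Lemma~\ref{lem:union} then gives generation.

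Separately, your justification of the generation criterion is off, since vigorousness gives conjugators in $G$ rather than in $H$. The criterion itself is true, by repeated use of Lemma~\ref{lem:union}.
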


\begin{ex}
\label{ex:minimal}
For Thompson's group $V$, there is a simple construction to see that $m(V) = \infty$. 
Let $n \geq 2$ and let $x_1, \dots, x_N$ be the words of length $n$ over $\{0,1\}$. 
As proved in Section~\ref{ss:prelims_generation_v},
\[
\{ (x_1 \ x_2), \dots, (x_1 \ x_N), (x_1 \ x_N0 \ x_N1) \}
\]
is a minimal generating set of $V$ of size $N = 2^n$. 
This proves that $V$ has minimal generating sets of arbitrarily large size. 
Tarski's Irredundant Basis Theorem, therefore, implies that $V$ has a minimal generating set of size $k$ for every integer $k \geq 2$.
\end{ex}

Theorems~\ref{thm:torsion} and~\ref{thm:involutions} have two applications.
First, in 1978, Wiegold and Wilson \cite{WiegoldWilson78} proved that if $G$ is a finitely generated infinite simple group, then $d(G) \leq d(G^k) \leq d(G) + 1$ for all positive integers $k$.
Therefore, if $G \leq \Homeo(\C)$ is a finitely generated simple vigorous group, then $d(G^k) \in \{ 2, 3 \}$ for all positive integers $k$, and, as a consequence of Theorem~\ref{thm:torsion}, we actually have $d(G^k) = 2$. 
In fact, our methods give the following much stronger result.

\begin{thm}
\label{thm:products}
Let $G_1, \dots, G_l \leq \Homeo(\C)$ be finitely generated simple vigorous groups.
Let $k \geq 2$ and let $m_1, \dots, m_k \geq 2$ be integers where $\{ m_1, m_2 \} \neq \{ 2 \}$ if $k = 2$.
Then $G_1 \times \cdots \times G_l$ has a minimal generating set $\{ \alpha_1, \dots, \alpha_k \}$ of size $k$ where $|\alpha_i| = m_i$ for all $1 \leq i \leq k$.
\end{thm}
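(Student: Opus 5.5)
The strategy is to reduce Theorem~\ref{thm:products} to the single-group case, Theorem~\ref{thm:minimal}. We realise the direct product $G_1 \times \cdots \times G_l$ as a single group of homeomorphisms of Cantor space. Partition $\C$ into clopen sets $U_1, \dots, U_l$, each homeomorphic to $\C$, and let $G_i$ act on $U_i$ via a fixed homeomorphism $U_i \cong \C$, trivially elsewhere. Then $P = G_1 \times \cdots \times G_l$ embeds in $\Homeo(\C)$. It is not vigorous, since nothing moves points between the $U_i$. So we do not apply Theorem~\ref{thm:minimal} directly. Instead we rerun its constructive proof inside each factor, synchronising the choices across the factors.

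\textbf{Step 1: choose the factor generators.} For each $i$, the construction behind Theorem~\ref{thm:minimal} gives a minimal generating set $\{\alpha_1^{(i)}, \dots, \alpha_k^{(i)}\}$ of $G_i$ with $|\alpha_j^{(i)}| = m_j$. We choose these so that the ``witness'' structure is uniform in $i$. That is, the same word in the generators certifies each containment, and the same combinatorial data certifies that dropping $\alpha_j$ loses generation.

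\textbf{Step 2: form the diagonal elements.} Set $\alpha_j = (\alpha_j^{(1)}, \dots, \alpha_j^{(l)})$. Its order is $\operatorname{lcm}_i |\alpha_j^{(i)}| = m_j$.

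\textbf{Step 3: minimality.} If $\{\alpha_1, \dots, \alpha_k\} \setminus \{\alpha_j\}$ generated $P$, then projecting to $G_1$ would show that $\{\alpha_1^{(1)}, \dots, \alpha_k^{(1)}\} \setminus \{\alpha_j^{(1)}\}$ generates $G_1$. This contradicts the minimality from Step 1, so minimality is immediate.

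\textbf{Step 4: generation.} Let $H = \<\alpha_1, \dots, \alpha_k\>$. By construction $H$ projects onto each $G_i$. Each $G_i$ is nonabelian simple, so by Goursat's lemma $H$ fails to be all of $P$ only if, for some $i \neq i'$, the projection of $H$ to $G_i \times G_{i'}$ is the graph of an isomorphism $\theta \colon G_i \to G_{i'}$ with $\alpha_j^{(i)}\theta = \alpha_j^{(i')}$ for all $j$. So it suffices to produce, for each pair $i \neq i'$, some word $w$ in the generators with $w(\alpha^{(i)}) = 1$ but $w(\alpha^{(i')}) \neq 1$. Equivalently, we need a nontrivial element of $H$ with trivial $i$-th coordinate.

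\textbf{Main obstacle: producing such a word.} Isomorphic factors, for instance $G_i = G_{i'} = V$, could make the diagonal generators genuinely diagonal, so the factor generators must be chosen differently. I would use the flexibility in the constructions of Section~\ref{s:torsion}. Those generators are built from a ``core'' piece that generates a large vigorous subgroup, together with auxiliary elements supported on small clopen sets. In the $i$-th factor, I would insert an auxiliary element with a different support or dynamical type. For example, choose a commutator-type word whose value in $G_i$ has support of a different number of clopen pieces, or has a different order, than its value in $G_{i'}$.

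Isomorphisms between vigorous groups are induced by homeomorphisms (Rubin's theorem). Hence a relation of the form ``some word has order $r_i$'' holds in one factor and fails in another, and this rules out any $\theta$. With these invariants distinct for each $i$, Goursat forces $H = P$. In the single-factor case of Theorem~\ref{thm:minimal} the order pattern of the generators is rigid, so getting distinct invariants while keeping the orders $m_j$ and minimality is the delicate part.
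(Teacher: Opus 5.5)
Your outer argument is the paper's: diagonal elements, orders via lcm, minimality by projecting to a single coordinate, and generation via Goursat's lemma. Since minimality only uses one coordinate, the ``uniform witness structure'' of Step~1 is unnecessary. Goursat reduces everything to excluding, for each pair $i \neq i'$ of isomorphic factors, an isomorphism $\theta$ with $\alpha_j^{(i)}\theta = \alpha_j^{(i')}$ for all $j$.

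The gap is that this last point is never established, and it is the whole content of the theorem once two factors are isomorphic (already for $G \times G$). What you need is a statement about one group: $G$ has at least as many pairwise $\Aut(G)$-inequivalent minimal generating sequences $(\alpha_1, \dots, \alpha_k)$ with $|\alpha_j| = m_j$ as there are factors isomorphic to $G$. Your ``Main obstacle'' paragraph does not provide this. A support ``with a different number of clopen pieces'' is not an invariant at all, since every nonempty clopen subset of $\C$ splits into any number of clopen pieces. A word whose values have different orders in the two factors would suffice. This needs no Rubin's theorem, since any isomorphism preserves orders and commutes with evaluating words. But you neither exhibit such a word nor say how to vary the construction to produce one while keeping the orders $m_j$ and minimality. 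As it stands, Step~4 is proved only when the $G_i$ are pairwise nonisomorphic.

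The paper supplies the missing ingredient as Theorem~\ref{thm:minimal_strong}, via Proposition~\ref{prop:injective}. Let $(\alpha, \beta) = \Phi_{m,n}(G, C, \sigma, \tau)$. The fixed words $u = (ab)^{b^{-2}a^{-1}b^{-2}}$ and $v = (ab)^{b^{-2}a^{-1}b^{-1}}$ evaluate to elements that stabilise $C$ and agree with $\sigma$ and $\tau$ on $C$. The word $w = [u^r, v^r]$ evaluates to $[\sigma^r, \tau^r] \neq 1$, whose normal closure under $\langle u, v \rangle$ is $G_C$ by simplicity. Hence $(\alpha,\beta)$ determines $G_C$ and $(\sigma,\tau)$ equivariantly: if $(\alpha_1, \beta_1) = (\alpha_2\phi, \beta_2\phi)$ with $\phi \in \Aut(G)$, then $G_C\phi = G_C$ and $\sigma_1 = \sigma_2\phi$.

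The final clause of Proposition~\ref{prop:torsion} provides inputs $(\sigma_i, \tau_i) \in \S_{m,r}(G_C)$ with pairwise distinct orders $|\sigma_i|$, so the outputs are pairwise $\Aut(G)$-inequivalent. The invariant is therefore the order of $\sigma \in G_C$, recovered on $C$, not the order of any word value in $G$; this is why the argument first pins down $G_C\phi = G_C$. Inequivalence then survives the induction on $k$ by restricting $\phi$ to $G_D = \langle \alpha_1, \dots, \alpha_{k-1} \rangle$. With that theorem in hand, your Steps~2--4 go through exactly as in the paper.
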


Second, in 1974, Hall \cite{Hall74} proved that every countable group embeds in a finitely generated simple group, and Schupp \cite{Schupp76} showed that the simple group can be chosen to be $(m, n)$-generated for any $m \geq 2$ and $n \geq 3$ (see also \cite{Goryushkin74}).
In 1998, Bridson \cite{Bridson98} proved that every finitely generated group quasi-isometrically embeds in a finitely generated group with no proper finite index subgroup.
Belk and Zaremsky \cite{BelkZaremsky22} recently proved that every finitely generated group quasi-isometrically embeds into a finitely generated twisted Brin--Thompson group, which, as they note, is a simple vigorous group. 
Therefore, Theorem~\ref{thm:torsion} implies the following, which simultaneously generalises the results of Schupp and Bridson for finitely generated groups.

\begin{cor}
\label{cor:embedding}
Let $m \geq 2$ and $n \geq 3$. 
Every finitely generated group quasi-isometrically embeds in an $(m, n)$-generated simple group.
\end{cor}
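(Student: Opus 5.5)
The corollary is a direct combination of Theorem~\ref{thm:torsion} with the embedding theorem of Belk and Zaremsky \cite{BelkZaremsky22}.

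Let $H$ be a finitely generated group and fix $m \geq 2$ and $n \geq 3$. By \cite{BelkZaremsky22}, there is a finitely generated twisted Brin--Thompson group $G$ together with a quasi-isometric embedding $H \to G$. Here the word metrics on $H$ and on $G$ come from fixed finite generating sets.

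We need $G$ to satisfy the hypotheses of Theorem~\ref{thm:torsion}. Twisted Brin--Thompson groups act faithfully on a Cantor space (a product of Cantor sets, which is itself homeomorphic to $\C$), so we may regard $G \leq \Homeo(\C)$. Belk and Zaremsky show these groups are simple, and, as they note, they are vigorous. So $G$ is a finitely generated simple vigorous group. If one wishes to verify vigour directly, the argument uses the cube-like clopen sets $B \subseteq C \subsetneq A$: inside the group one finds elements that contract a basic box into any nonempty open subset while fixing the complement of $A$. I would cite this rather than reprove it.

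Theorem~\ref{thm:torsion} now says $G$ is generated by some $\alpha$ of order $m$ and $\beta$ of order $n$.

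The remaining check is that the embedding stays quasi-isometric after switching to the generating set $\{\alpha, \beta\}$. This holds because any two finite generating sets of $G$ give bi-Lipschitz equivalent word metrics. Hence the composite $H \to G$ is still a quasi-isometric embedding into the $(m,n)$-generated simple group $G$.

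The only point requiring care is confirming that the twisted Brin--Thompson groups produced in \cite{BelkZaremsky22} are indeed vigorous and act on Cantor space. This is exactly what is stated there, so no genuine obstacle remains.
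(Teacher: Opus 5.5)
Your proposal is correct and matches the paper's argument: the corollary follows by combining the Belk--Zaremsky quasi-isometric embedding into a finitely generated twisted Brin--Thompson group (which they note is simple and vigorous) with Theorem~\ref{thm:torsion}. Your remark that switching to the generating set $\{\alpha,\beta\}$ preserves the quasi-isometric embedding, because word metrics from different finite generating sets are bi-Lipschitz equivalent, makes explicit a step the paper leaves unstated.
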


The Boone--Higman conjecture \cite{BooneHigman74} asserts that a finitely generated group has solvable word problem if and only if it embeds in a finitely presented simple group.
The Permutational Boone--Higman conjecture \cite{Zaremsky24}, a recent variant on this long-standing conjecture, would imply that this simple group could be taken to be a vigorous group.

Our next main result establishes a strong version of $\frac{3}{2}$-generation mentioned earlier.

\begin{thm} 
\label{thm:spread}
Let $G \leq \Homeo(\C)$ be a finitely generated simple vigorous group. 
Then there exists $\sigma \in G$ of order $30$ such that for any nontrivial $\alpha \in G$ there exists $\beta \in \sigma^G$ such that $\< \alpha, \beta \> = G$. 
\end{thm}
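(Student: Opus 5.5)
The plan is to build $\sigma$ of order $30 = 2\cdot 3\cdot 5$ as a product of commuting pieces with disjoint supports: an involution, an element of order $3$ and an element of order $5$, each supported on its own clopen set. Given a nontrivial $\alpha$, I would then use vigour to conjugate $\sigma$ so that $\alpha$ and the conjugate $\beta$ interact in a controlled way. The key input is a generation criterion of the type established elsewhere in the paper, which I am assuming is available: if a subgroup $H\leq G$ contains enough elements supported on small clopen sets (for instance, the full rigid stabiliser of some clopen set $A$, together with an element moving $A$ so that its translates cover $\C$), then $H = G$. I also use the result of Bleak--Elliott--Hyde that a finitely generated simple vigorous group is generated by elements of small support, so a fixed finite generating set can be compressed into any nonempty clopen set by conjugation.

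\emph{Step 1.} Since $\alpha \neq 1$, there is a nonempty clopen set $U$ with $U\alpha \cap U = \emptyset$. Shrinking $U$ if necessary, we may also assume $U \cup U\alpha \neq \C$.

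\emph{Step 2.} Choose $\beta \in \sigma^G$ so that the pieces of $\beta$ are positioned relative to $U$ and $U\alpha$. Some of its cycles of clopen sets should lie inside $U$, and others should straddle $U$, $U\alpha$ and the complement. Then commutators such as $[\beta^{j},\beta^{\alpha}]$, or products like $\beta^{10}(\beta^{10})^{\alpha}$, yield nontrivial elements supported in a small region. The orders $2$, $3$ and $5$ are coprime, so the powers $\beta^{15}$, $\beta^{10}$ and $\beta^{6}$ isolate each piece inside $\langle \beta\rangle$, giving three independently usable torsion elements.

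\emph{Step 3.} Using the isolated pieces and their $\alpha$-conjugates, I would show that $H=\langle\alpha,\beta\rangle$ contains the rigid stabiliser of some clopen set $A\subseteq U$. The idea is to design the order-$5$ piece (and possibly the order-$3$ piece) of $\sigma$ so that its restriction, together with its $\alpha$-translate, realises inside $H$ a compressed copy of a generating set of $G$ supported in $A$. By simplicity of $G$ and vigour, the normal closure argument then gives the whole rigid stabiliser.

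\emph{Step 4.} Finally, use the remaining pieces of $\beta$, which move $A$ around $\C$, to spread this rigid stabiliser until it covers all of $\C$. This yields $H=G$.

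The main obstacle is Step 3. The construction must be uniform: a single $\sigma$, fixed in advance, has to work for every nontrivial $\alpha$, and in particular $\alpha$ gives no control over what $\beta$'s conjugates produce inside $U$. What I would try first is to bake the compressed generators of $G$ directly into $\sigma$, as a finite-order element carrying a copy of a generating pair on small disjoint clopen pieces, in the spirit of the $(m,n)$-generation constructions behind Theorem~\ref{thm:torsion}. One would then show that a single commutator with $\alpha$ disentangles these generators from the rest of $\beta$.
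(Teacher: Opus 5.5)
There is a genuine gap at Step~3, which you yourself flag as the main obstacle. Your skeleton matches the paper's: pieces of orders $2,3,5$ isolated by $\sigma^{15},\sigma^{10},\sigma^{6}$, a set $U$ with $U\cap U\alpha=\emptyset$, a rigid stabiliser, then spreading. But you never say how the fixed $\sigma$ and an arbitrary $\alpha$ combine to give a rigid stabiliser, and the mechanism you suggest (commutators with $\alpha$, or $[\beta^j,\beta^\alpha]$) cannot be controlled uniformly. Such elements depend on how $\alpha$ acts on all of $\Supp(\beta)$, and you know nothing about that. The only controlled datum is that $\alpha$ carries $U$ onto the disjoint set $U\alpha$. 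The way to use it is conjugation: an element supported in $U\alpha$, conjugated by $\alpha^{-1}$, is supported in $U$ and depends only on $\alpha|_U$.

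Also, a ``compressed copy of a generating set of $G$'' is not the right object, since conjugating small-support generators of $G$ into $A$ does not give generators of $G_A$. The right input is that $G_C$ is itself a finitely generated simple vigorous group over $C$ (Lemma~\ref{lem:restriction}). So by Theorem~\ref{thm:torsion_involutions_strong}(i), $G_C=\langle\mu,\nu\rangle$ with $|\mu|=2$, $|\nu|=5$, both of small support. These two cannot both sit on $C$ as commuting pieces of $\sigma$. The paper therefore places $\mu$ on $C$ and $\nu^\gamma$ on a disjoint translate $C\gamma$, writes $\C=C\sqcup C\gamma\sqcup D$, and takes $\sigma=\lambda\mu\nu^\gamma$. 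Here $\lambda$ enacts the $3$-cycle $(D\gamma_1\ D\gamma_2\ D)$, with $D\gamma_1\subsetneq\Fix_C(\mu)$ and $D\gamma_2\subsetneq\Fix_{C\gamma}(\nu^\gamma)$.

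Given $\alpha$, take $U=C\tau_1$ with $U\cap U\alpha=\emptyset$ and $U\cup U\alpha\neq\C$. Then choose the conjugator $\tau$, depending on $\alpha$, with $\tau|_C=\tau_1|_C$ and $\tau|_{C\gamma}=(\gamma^{-1}\tau_1\alpha)|_{C\gamma}$. This exact alignment is the missing idea: it gives $(\nu^{\gamma\tau})^{\alpha^{-1}}=\nu^{\tau}$. So with $\beta=\sigma^\tau$ you get $\langle\alpha,\beta\rangle\geq\langle\mu^\tau,\nu^\tau\rangle=G_C^\tau=G_U$, and no ``disentangling'' is needed beyond taking powers of $\beta$. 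Merely putting the two pieces on the same set with mismatched identifications would not give this.

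Producing such a $\tau$ is not a consequence of vigour alone. It must also send $D$ onto $V=\C\setminus(U\sqcup U\alpha)$. That uses Lemma~\ref{lem:partition} to get $[D]_G=[V]_G$ from $[C]_G=[C\gamma]_G=[U]_G=[U\alpha]_G$, and then approximate fullness.

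Finally, the spreading in your Step~4 uses $\alpha$ itself to reach $U\alpha$ and $\lambda^\tau=\beta^{10}$ to reach $V$. Take a clopen $W$ with $D\gamma_1\tau\subsetneq W\subsetneq\Fix_U(\mu^\tau)$. Then the sets $U$, $W$, $W(\lambda^\tau)^2$, $W\lambda^\tau$, $U\alpha$ cover $\C$, consecutive ones meet, and Lemma~\ref{lem:union} gives $G$. As written, your Steps~2 and~3 do not pin down $\sigma$ or $\beta$ concretely enough to deliver any of this.
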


Donoven and Harper \cite{DonovenHarper20} proved that Thompson's group $V$, as well as the Higman--Thompson groups $V_n'$ and the Brin--Thompson groups $nV$, are $\frac{3}{2}$-generated, giving the first examples of infinite $\frac{3}{2}$-generated groups (other than the infinite cyclic group and Tarski monsters). 
Theorem~\ref{thm:spread} vastly generalises this result both by giving a much stronger conclusion and by uniformly handling a much broader class of groups.

Uniform spread is a generalisation of $\frac{3}{2}$-generation that has seen much attention for finite groups in the last couple of decades (see Harper's survey \cite{Harper24Survey}). 
The \emph{uniform spread} of $G$, written $u(G)$, is the supremum over integers $k$ for which there exists $\sigma \in G$ such that for any nontrivial $\alpha_1, \dots, \alpha_k \in G$, there exists $\beta \in \sigma^G$ such that $\< \alpha_1, \beta \> = \dots = \< \alpha_k, \beta \> = G$. 
If $u(G) \geq 1$, then certainly $G$ is $\frac{3}{2}$-generated, and, in fact, the symmetric group $S_6$ is the only nonabelian finite group $G$ that is $\frac{3}{2}$-generated but where $u(G) = 0$ (see \cite[Theorem~3]{BurnessGuralnickHarper21}).

Theorem~\ref{thm:spread} has the following immediate consequence.

\begin{cor}
\label{cor:spread}
Let $G \leq \Homeo(\C)$ be a finitely generated simple vigorous group. 
Then $u(G) \geq 1$.
\end{cor}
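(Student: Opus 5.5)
The statement is Corollary~\ref{cor:spread}, which follows directly from Theorem~\ref{thm:spread}. I will simply unwind the definition of uniform spread.

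Recall that $u(G)$ is the supremum of the integers $k$ for which there exists $\sigma \in G$ such that for any nontrivial $\alpha_1, \dots, \alpha_k \in G$ there is some $\beta \in \sigma^G$ with $\< \alpha_i, \beta \> = G$ for all $1 \leq i \leq k$. To show $u(G) \geq 1$, it therefore suffices to exhibit one witness $\sigma$ that works for $k = 1$.

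Take $\sigma \in G$ to be the element of order $30$ given by Theorem~\ref{thm:spread}. Let $\alpha_1 \in G$ be an arbitrary nontrivial element. Theorem~\ref{thm:spread} provides $\beta \in \sigma^G$ with $\< \alpha_1, \beta \> = G$. Thus $k = 1$ belongs to the set over which the supremum is taken, and so $u(G) \geq 1$.

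There is no real obstacle here. All of the difficulty lies in Theorem~\ref{thm:spread}, which we are permitted to assume. The only point worth noting is that the witness $\sigma$ in the definition must be chosen before the elements $\alpha_i$. This order of quantifiers is exactly the one Theorem~\ref{thm:spread} provides: a single fixed $\sigma$ works for every nontrivial $\alpha$.
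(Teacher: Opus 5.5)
Your proposal is correct and matches the paper, which presents this corollary as an immediate consequence of Theorem~\ref{thm:spread} (proved as Proposition~\ref{prop:spread}). Taking $k = 1$ in the definition of $u(G)$, with the fixed order-$30$ element $\sigma$ as the witness, gives exactly the required quantifier order, as you noted.
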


It is natural to ask for the precise value of the uniform spread of simple vigorous groups and, in particular, Thompson's group $V$. 
We pose the following question.

\begin{que}
\label{que:spread}
Is $u(G) = \infty$ for all finitely generated simple vigorous groups $G \leq \Homeo(\C)$?
\end{que}

The authors are currently considering this question in ongoing work, but we make some comments in Section~\ref{s:spread}.
In particular, Question~\ref{que:spread_refined} gives a more fine-grained version of Question~\ref{que:spread}.
Very little is known in this direction, and even the following special case was open.
However, we settle this case, using Corollary~\ref{cor:torsion_v_intro} as a key ingredient.

\begin{thm}
\label{thm:spread_two_swaps}
There exists an element $\sigma \in V$ such that $\< (00 \ 01), \sigma \> = \< (10 \ 11), \sigma \> = V$.
\end{thm}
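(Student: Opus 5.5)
We will build $\sigma$ from the $(2,3)$-generation of $V$ in Corollary~\ref{cor:torsion_v_intro}, placed in small copies of $V$ inside $\C$. The point is that $(00\ 01)$ and $(10\ 11)$ are conjugate by the involution $\tau = (0\ 1)$, which swaps the halves $0\C$ and $1\C$. So if $\sigma$ commutes with $\tau$, then $\<(10\ 11),\sigma\> = \<(00\ 01),\sigma\>^\tau$. It therefore suffices to find a $\tau$-invariant $\sigma$ with $\<(00\ 01),\sigma\> = V$. Concretely, we take $\sigma = \rho\,\rho^\tau\,\kappa$, where $\rho$ is supported in $0\C$ and $\kappa$ is some element commuting with $\tau$ (for example a power of $\tau$ itself, or a swap of $01\C$ and $11\C$). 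A natural choice is $\sigma = \rho\rho^\tau\tau$.

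Write $x = (00\ 01)$ and $H = \<x,\sigma\>$. The strategy is as follows.
\begin{enumerate}
\item Choose $\sigma$ so that a suitable power or commutator isolates the $0\C$ part. If $\sigma = \rho\rho^\tau\tau$, then $\sigma^2 = (\rho\tau\rho\tau)(\tau\rho\tau\rho)$ restricts to $0\C$ as $\rho\rho'$, where $\rho'$ is the copy of $\rho$ transported from $1\C$ back to $0\C$. The element $x$ is supported in $0\C$.
\item Use $x$, $\sigma$ and the inherent symmetry to obtain, inside $H$, elements of $V$ supported on $0\C$. Here we want the restrictions to $0\C \cong \C$ to realise the $(2,3)$-generating pair $\alpha, \beta$ of Example~\ref{ex:torsion}. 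For instance, we arrange that $x$ acts on $0\C$ as the copy of $\alpha$ after conjugating by a fixed identification. Rather than insisting that $x$ itself be $\alpha$, we let $\sigma$ conjugate $x$ onto a suitable involution. Since $\alpha$ is an involution of finite support type, we pick $g \in V$ with $x^g$ equal to the copy of $\alpha$ on $0\C$, up to pieces that can later be killed.
\item Deduce that $H$ contains the full rigid stabiliser $V_{0\C}$, the copy of $V$ acting on $0\C$ and fixing $1\C$. Then $H$ contains its conjugate by $\sigma$, which is supported on $1\C$ when $\sigma$ swaps the halves.
\item Conclude $H = V$. The subgroups $V_{0\C}$ and $V_{1\C}$ together with one element swapping $0\C$ and $1\C$ generate $V$. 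This holds because $\<V_{0\C}, V_{1\C}, \tau\>$ contains $(00\ 1)$-type elements, and hence all of $V$ by the standard generation criteria of Section~\ref{ss:prelims_generation_v}. Finally $\sigma$ differs from $\tau$ by an element of $V_{0\C}V_{1\C} \leq H$, so $\tau \in H$.
\end{enumerate}

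The main obstacle is step 3: making $H \cap V_{0\C}$ large from only the two generators, given that $\sigma$ mixes the halves. The first thing to try is a ping-pong/commutator trick. Design $\rho$ so that $\sigma^2$ restricted to $1\C$ is the identity, or has order coprime to the relevant part on $0\C$. Then a suitable power $\sigma^{2k}$ is supported in $0\C$ and equals there the order-$3$ element $\beta$. This is achieved by choosing $\rho\rho'$ of order $3$ on $0\C$ while the $1\C$ component is an involution; then $\sigma^{4}$ kills the $1\C$ part and leaves $\beta^{\pm1}$. Together with $x^{g}$ realised as a word in $x$ and $\sigma$, we obtain $\<\alpha,\beta\>$ acting on $0\C$, which is $V_{0\C}$ by Corollary~\ref{cor:torsion_v_intro}. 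Verifying that such $\rho$ exists with all these constraints simultaneously is the delicate, explicit part; I expect to do it by a concrete tree-pair construction as in Example~\ref{ex:torsion}.
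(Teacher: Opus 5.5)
The reduction at the start of your proposal is sound. Since $(00\ 01)^\tau = (10\ 11)$ for $\tau = (0\ 1)$, any $\sigma$ commuting with $\tau$ reduces the theorem to proving $\langle (00\ 01), \sigma\rangle = V$. The paper does something equivalent: it uses an involution $\gamma$ with $\rho^\gamma = \rho^{-1}$ that conjugates one involution to another with complementary support.

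The gap is that you have no working argument for $\langle (00\ 01), \sigma\rangle = V$, and the mechanism you sketch cannot work, for two concrete reasons.

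\textbf{Step~4 is false.} Every generator of $\langle V_{0\C}, V_{1\C}, \tau\rangle$ permutes the blocks $0\C, 1\C$. So this group is $(V_{0\C}\times V_{1\C})\rtimes\langle\tau\rangle \cong V\wr C_2$, a proper subgroup of $V$ that contains no element like $(00\ 1)$. Lemma~\ref{lem:union} needs overlapping supports, and $0\C\cap 1\C=\emptyset$. Worse, take your ``natural choice'' $\sigma = \rho\rho^\tau\tau$, or more generally $\kappa$ any power of $\tau$, with $\Supp(\rho)\subseteq 0\C$. Then both $(00\ 01)$ and $\sigma$ lie in this subgroup, so $\langle (00\ 01),\sigma\rangle\neq V$ however $\rho$ is chosen.

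\textbf{The isolation step is impossible for any $\sigma$ commuting with $\tau$.} Each power $\sigma^k$ commutes with $\tau$, so $\Supp(\sigma^k)$ is $\tau$-invariant. Hence it cannot be a nonempty subset of $0\C$. Concretely, $\sigma=\rho\rho^\tau\tau$ gives $\sigma^2=\rho^2(\rho^2)^\tau$, whose $1\C$-component is the mirror image of its $0\C$-component. So ``order $3$ on $0\C$ and an involution on $1\C$'' cannot occur, and neither can ``$\sigma^4$ leaves $\beta^{\pm1}$ on $0\C$''. Step~2 has a further problem: the conjugator $g$ must lie in $\langle x,\sigma\rangle$, and you never arrange this.

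\textbf{What is missing.} You need a device that uses the asymmetric generator to manufacture a nontrivial element supported in a small region. You also need overlapping, not disjoint, translates to feed into Lemma~\ref{lem:union}. The paper supplies both as follows.
\begin{enumerate}
\item It takes $\rho$ of order $7$ with full support and $\C=\bigsqcup_{i=0}^6 U\rho^i$.
\item It builds $\beta$ from three involutions $\alpha_0,\alpha_1,\alpha_2$ generating $V_U$ (Lemma~\ref{lem:involutions_v_support}, which rests on the $(2,3)$-generation). These are placed on $U_0\subsetneq U$, on $U\rho$ and on $U\rho^2$, and $\beta$ also includes an involution straddling $U_1\rho^3\cup U_1\rho^4$.
\item The group $G=\langle\beta,\beta^{\rho^{-1}},\beta^{\rho^{-2}}\rangle$ acts on $U$ as the simple group $V_U$. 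On $U\rho^{-1}$ it acts only via two involutions, so through a metabelian quotient of the infinite dihedral group.
\item Hence $G''$ acts as $V_U$ on $U$ and trivially on $U\rho^{-1}$. The iterated commutator $[[\delta,\tau_1],\tau_2]$, with $\delta=[\beta^{\rho^{-1}},\beta^{\rho^3}]$, then gives a nontrivial element supported in $U$. Its normal closure under $G''$ is $V_U$.
\item The straddling piece makes $U\beta^{\rho^{-3}}=U_0\cup U_1\rho$ meet both $U$ and $U\rho$. This is exactly the overlap that lets Lemma~\ref{lem:union} chain up to all of $V$, and exactly what your step~4 lacks.
\end{enumerate}
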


Let us comment on the structure of the paper.
Section~\ref{s:prelims} gives various preliminary results on vigorous groups. 
This includes Theorem~\ref{thm:criterion}, which is a new generation criterion for $V$ that we expect to have further applications.
More specifically, many important questions concerning Thompson's group $V$ require showing that a particular set of elements generate $V$ and many ad hoc arguments have been provided in the literature to do this.
Theorem~\ref{thm:criterion} is the most general result that we are aware of for showing that a set of elements generate $V$, so we expect that this will simplify many future proofs in this field.

Turning to our main results, Section~\ref{ss:torsion_construction} gives a general construction for generating sets of simple vigorous groups, and Proposition~\ref{prop:technical} is the main generation result in the paper. 
As with Theorem~\ref{thm:criterion}, we have stated this result in such generality to allow readers to obtain further generation results as an immediate consequence.
We use this construction to prove Theorem~\ref{thm:products} (and its special cases Theorems~\ref{thm:torsion}, \ref{thm:involutions} and~\ref{thm:minimal}) in Section~\ref{ss:torsion_applications}.
Theorems~\ref{thm:spread} and~\ref{thm:spread_two_swaps} are proved in Section~\ref{ss:spread_lower}, and Section~\ref{ss:spread_upper} features further comments on uniform spread.

Actions are on the right, so, in particular, $x^y$ denotes $y^{-1}xy$ and $[x,y]$ denotes $x^{-1}y^{-1}xy$.

\section{Preliminaries} 
\label{s:prelims}

\subsection{Properties of simple vigorous groups} 
\label{ss:prelims_properties}

This section recalls some standard properties of simple vigorous groups, following \cite{BleakElliottHyde24}.
Let $\C$ be Cantor space.
As usual, for $U \subseteq \C$ and $G \leq \Homeo(\C)$, define the \emph{pointwise stabiliser} $G_{(U)} = \{ \gamma \in G \mid \text{$u\gamma = u$ for all $u \in U$} \}$ and the \emph{rigid stabiliser} $G_U = G_{(\C \setminus U)}$. 

A group $G \leq \Homeo(\C)$ is \emph{vigorous} over $U \subseteq \C$ if for any clopen $\emptyset \subsetneq A \subseteq U$ and any clopen $\emptyset \subsetneq B,C \subsetneq A$ there exists $\gamma \in G_A$ such that $B\gamma \subseteq C$; if $U = \C$, then we say that $G$ is \emph{vigorous}. 
Let $\mathcal{K}^{\text{(fg)}}$ be the class of (finitely generated) simple $G \leq \Homeo(\C)$ that are vigorous, and let $\mathcal{K}_U^{\text{(fg)}}$ be the class of (finitely generated) simple $G \leq \Homeo(\C)_U$ that are vigorous over $U$. 
The following is  \cite[Proposition~5.4]{BleakElliottHyde24}.

\begin{lemma}
\label{lem:restriction}
Let $G \leq \Homeo(\C)$, and let $\emptyset \subsetneq U \subsetneq \C$ be clopen. 
If $G \in \mathcal{K}$, then $G_U \in \mathcal{K}_U$, and if $G \in \mathcal{K}^{\text{fg}}$, then $G_U \in \mathcal{K}^{\text{fg}}_U$.
\end{lemma}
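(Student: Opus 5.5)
The proof splits into three parts: vigour of $G_U$ over $U$, simplicity of $G_U$, and finite generation of $G_U$ when $G$ is finitely generated.

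\emph{Vigour.} This is immediate from the definitions. Let $\emptyset \subsetneq A \subseteq U$ be clopen and let $\emptyset \subsetneq B, C \subsetneq A$ be clopen. Vigour of $G$ gives $\gamma \in G_A$ with $B\gamma \subseteq C$. Since $A \subseteq U$, we have $G_A \leq G_U$, so $\gamma \in (G_U)_A$. Moreover $G_U \leq \Homeo(\C)_U$ by definition.

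\emph{Simplicity.} I would use the standard commutator argument. Let $1 \neq N \trianglelefteq G_U$ and pick $1 \neq \nu \in N$. Choose a small clopen $D \subseteq U$ with $D\nu \cap D = \emptyset$. For $g, h \in G_D$, the commutator $[[\nu,g],h]$ lies in $N$. Since $\nu$ moves $D$ off itself, these double commutators realise every commutator $[g,h]$ of $G_D$. Hence $N \geq G_D'$.

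Next I would show that $G_D'$ (or $G_D$ itself) normally generates $G_U$, using vigour to conjugate supports around. The key claim is that every element of $G_U$ is a product of elements supported on small clopen sets. For this I would use that $G$ is simple, hence $G = G'$, together with a fragmentation argument in the style of the Bleak--Elliott--Hyde proof that vigorous simple groups are generated by elements of small support. Given that claim, each small-support piece is a $G_U$-conjugate of an element of $G_D'$, so $N = G_U$. Double commutators of small-support elements also handle the passage from $G_D'$ to $G_D$.

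\emph{Finite generation.} This is the main obstacle. Given a finite generating set of $G$, I would aim to produce finitely many elements of $G_U$ that generate it. The plan is to write $\C$ as a union of $U$ and finitely many clopen sets $\gamma$-conjugate into $U$, which is possible by vigour when $U \neq \C$. Each generator of $G$ is then fragmented into finitely many pieces supported in translates of $U$ (or in sets of the form $V \cup V'$ with $V, V' \subsetneq U$ after conjugation). Conjugating these pieces back into $U$ gives a finite set $S \subseteq G_U$.

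One then shows $\langle S \rangle$ contains a vigorous simple subgroup supported in $U$ and is normal in $G_U$, for instance by a Higman-type argument. Alternatively, and this is the cleaner route I would try first, cite that $G_U$ and $G_{U'}$ are isomorphic whenever $U$ and $U'$ are proper clopen sets (conjugate by vigour). Since $G$ is generated by finitely many elements each supported in some proper clopen set, $G$ is generated by $G_{U_1} \cup \dots \cup G_{U_r}$. One then argues that finite generation of $G$ forces finite generation of a single $G_U$ by an HNN/amalgam-style coset bookkeeping. I expect this last step to be the genuinely delicate one, and in practice I would defer to \cite[Proposition~5.4]{BleakElliottHyde24}.
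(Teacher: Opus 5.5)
The paper does not prove this lemma: it quotes it from \cite[Proposition~5.4]{BleakElliottHyde24}. So your final deferral to that result is exactly what the paper does. Your vigour argument is correct and complete: for clopen $A \subseteq U$ one has $G_A \leq G_U$, hence $(G_U)_A = G_A$.

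The sketches you give for simplicity and finite generation, however, do not stand on their own. For simplicity, the identity $[[\nu,g],h] = [g,h]$ for $g,h \in G_D$ does give $N \geq G_D'$. Conjugating $\nu$ within $G_U$ upgrades this to $N \geq G_{D'}'$ for every clopen $D' \subsetneq U$. The next step fails as written, though. Conjugating a small-support piece of an element of $G_U$ into $D$ lands it in $G_D$, not in $G_D'$. Double commutators $[[\nu,g],h]$ only ever produce elements of $G_U'$. Since the subgroup normally generated by the groups $G_{D'}'$ lies in $G_U'$, your argument can only close once you know that $G_U$ is perfect. That is exactly where the simplicity of $G$ has to be transferred to $G_U$, and it does not follow from vigour and fragmentation: the Higman--Thompson group $V_{3,1}$ is vigorous and generated by elements of small support, yet has abelianisation of order $2$. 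Also, fragmenting elements of $G_U$ into pieces supported in proper clopen subsets of $U$ really uses approximate fullness of $G$, not $G = G'$.

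For finite generation, nothing is actually proved. Your ``cleaner route'' rests on a false claim. Vigour only gives $\gamma$ with $U\gamma \subseteq U'$, i.e.\ an embedding $G_U \hookrightarrow G_{U'}$. The groups $G_U$ and $G_{U'}$ are conjugate in $G$ only when $U' \in [U]_G$, and in general they are not even isomorphic. For example, in the simple vigorous group $V_{6,1}$ the rigid stabiliser of a cone is $V_{6,1}$, while that of a union of five cones is $V_{6,5}$; these are non-isomorphic by Pardo's classification of Higman--Thompson groups.

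Neither the ``HNN/amalgam-style coset bookkeeping'' nor the assertion that your finite set $S \subseteq G_U$ generates a normal subgroup of $G_U$ is an argument. Finite generation does not pass to subgroups, and $G = \langle G_{U_1}, \dots, G_{U_r} \rangle$ gives no finite generating set for any single $G_{U_i}$. Checking normality of $\langle S \rangle$ in $G_U$ already presupposes control of a generating set of $G_U$.

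So in your proposal, both simplicity and finite generation of $G_U$ rest solely on \cite[Proposition~5.4]{BleakElliottHyde24}. That is acceptable, since it is all the paper does. But then the lemma is simply a citation, your partial sketches add nothing, and the incorrect isomorphism claim should be removed.
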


By \cite[Theorem~4.18]{BleakElliottHyde24}, if $G \leq \Homeo(\C)$ is vigorous and simple, then it is generated by elements of \emph{small support} and \emph{approximately full}. 
Let us recall the definitions of these two concepts. 
First, an element $\gamma$ of $G \leq \Homeo(\C)$ has \emph{small support} if $\gamma \in G_U$ for a clopen set $U \subsetneq \C$. 
Second, a group $G \leq \Homeo(\C)$ is said to be \emph{approximately full} if for any $\gamma_1, \dots, \gamma_k \in G$ and any clopen sets $U_1, \dots, U_k$ such that $\C = \bigsqcup_{i=1}^{k}U_i = \bigsqcup_{i=1}^{k} U_i\gamma_i$, for all $1 \leq j \leq k$ there exists $\gamma \in G$ such that $\gamma|_{U_i} = \gamma_i|_{U_i}$ for all $i$ with $i \neq j$.

The following result, which is \cite[Lemma~5.2]{BleakElliottHyde24}, is the main way in this paper that we will exploit the property of being generated by elements of small support.

\begin{lemma}
\label{lem:union}
Let $\emptyset \subsetneq U, V \subsetneq \C$ be clopen and $U \cap V \neq \emptyset$. 
Let $G \leq \Homeo(\C)_{U \cup V}$. 
Assume that $G$ is vigorous and generated by elements of small support. 
Then $G =  \< G_U, G_V \>$.
\end{lemma}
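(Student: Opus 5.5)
Write $H = \< G_U, G_V \>$. Since $G$ is generated by elements of small support, it suffices to show that every $\gamma \in G$ of small support lies in $H$. The plan is to conjugate the support of such a $\gamma$ into $U$ by an element of $H$. If $h \in H$ satisfies $Wh \subseteq U$, where $W$ is a clopen set with $\gamma \in G_W$, then $\gamma^h \in G_{Wh} \subseteq G_U \leq H$, and so $\gamma = (\gamma^h)^{h^{-1}} \in H$.

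First I dispose of the degenerate cases. If $V \subseteq U$ or $U \subseteq V$, then $G = G_{U \cup V}$ equals $G_U$ or $G_V$, and there is nothing to prove. So assume $U \setminus V$ and $V \setminus U$ are nonempty; then $U \cap V$ is a proper nonempty clopen subset of both $U$ and $V$.

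Here I expect the main (if modest) obstacle: making sense of ``small support'' relative to $U \cup V$ rather than to $\C$. I would interpret it as follows: we may take $W \subseteq U \cup V$ with $E = (U \cup V) \setminus W$ containing a nonempty clopen set. If $U \cup V = \C$, this is literally the definition. Otherwise, I would appeal to the relativised version of generation by small support (as in \cite[Theorem~4.18]{BleakElliottHyde24} applied over $U \cup V$). I would also record that vigour of $G$ over $U \cup V$ restricts: for clopen $A \subseteq U$, the elements of $G_A$ supplied by vigour already lie in $G_U$, and similarly for $V$.

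The construction of $h$ then has two steps.

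\textbf{Step 1.} Arrange that the complement of the support meets $U \cap V$. Shrink $E$ to a nonempty clopen set $E'$ lying either in $U$ or in $V$, and proper in it.
\begin{enumerate}
\item If $E' \subseteq U$, apply vigour with $A = U$, $B = E'$ and $C = D_0$, where $D_0$ is a proper nonempty clopen subset of $U \cap V$. This gives $h_1 \in G_U$ with $E'h_1 \subseteq U \cap V$.
\item If $E' \subseteq V$, argue symmetrically to obtain $h_1 \in G_V$.
\end{enumerate}
In either case, since $h_1$ preserves $U \cup V$, the set $D = E'h_1$ is a nonempty clopen subset of $U \cap V$ disjoint from $Wh_1$.

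\textbf{Step 2.} Push the remaining support into $U$. Let $B = Wh_1 \cap V$. This is a clopen subset of $V \setminus D$, hence proper in $V$.
\begin{enumerate}
\item If $B = \emptyset$, take $h_2 = 1$.
\item Otherwise, apply vigour with $A = V$, $B$ as above and $C = U \cap V$, which is proper and nonempty in $V$. This gives $h_2 \in G_V$ with $Bh_2 \subseteq U \cap V$.
\end{enumerate}
Since $h_2$ fixes everything outside $V$, we get $(Wh_1 \setminus V)h_2 = Wh_1 \setminus V \subseteq U$. Hence $Wh_1h_2 \subseteq U$, and $h = h_1h_2 \in H$ is the required element. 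This completes the argument.
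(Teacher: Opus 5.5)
Your proof is correct and complete. The paper gives no argument of its own here; it simply quotes the result as \cite[Lemma~5.2]{BleakElliottHyde24}, so your proposal supplies a self-contained proof. Its strategy is the natural one: use vigour on $U$ and on $V$ to build $h \in \langle G_U, G_V\rangle$ carrying the support of a small-support generator into $U$, then conjugate.

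One correction to your set-up. Do not appeal to \cite[Theorem~4.18]{BleakElliottHyde24} for a ``relativised version'' of small support. That theorem needs simplicity, which is not a hypothesis of this lemma. Instead, read both hypotheses relative to $U \cup V$, which is how the paper uses the lemma (for instance with $G_{C \sqcup E}$ in the proof of Proposition~\ref{prop:technical}). If $U \cup V \neq \C$, a group fixing $\C \setminus (U \cup V)$ pointwise cannot be vigorous over $\C$. So ``vigorous'' must mean vigorous over $U \cup V$, and ``small support'' must mean lying in $G_W$ for some clopen $W \subsetneq U \cup V$. That is exactly what your Steps~1 and~2 use, so nothing else changes.

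As a minor simplification, Step~1 can be dropped:
\begin{enumerate}
\item If $E$ meets $V$, then $W \cap V \subsetneq V$, and a single element of $G_V$ pushes $W \cap V$ into $U \cap V$, giving $Wh \subseteq U$.
\item If $E \subseteq U \setminus V$, then $W \cap U \subsetneq U$, and the symmetric move by an element of $G_U$ gives $Wh \subseteq V$, hence $\gamma^h \in G_V$.
\end{enumerate}
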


\subsection{Homology groups of clopen sets} 
\label{ss:prelims_homology}

For this section, let $G \leq \Homeo(\C)$ be a vigorous approximately full group. 
Following the description in \cite[Section~5.2.1]{BleakElliottHyde24}, itself following other work including of Matui \cite{Matui12}, we define an abelian group that allows us to study the orbits of clopen subsets of $\C$ under the action of $G$ and how these orbits interact. 

For a clopen set $\emptyset \subsetneq U \subsetneq \C$, write $[U]_G = \{ U\gamma \mid \gamma \in G \}$. Now we may define
\[
\X_G = \{ [U]_G \mid \text{$\emptyset \subsetneq U \subsetneq \C$ clopen} \}.
\] 
For clopen sets $\emptyset \subsetneq U, V \subsetneq \C$, define $[U]_G + [V]_G$ as $[U\mu \cup V\nu]_G$ for elements $\mu, \nu \in G$ such that $U\mu \cap V\nu = \emptyset$ and $U\mu \cup V\nu \neq \C$. 
Finally, write 
\begin{gather*}
[\emptyset]_G = \{ U\gamma \setminus U \mid \text{$\emptyset \subsetneq U \subsetneq \C$ clopen, $\gamma \in G$, $U \subsetneq U\gamma$} \} \\
[\C]_G = \{ \C \setminus U \mid U \in [\emptyset]_G \}.
\end{gather*} 

The following result, which establishes the fundamental properties of $\X_G$, combines \cite[Proposition~5.8 \& Lemma~5.9]{BleakElliottHyde24}.

\begin{lemma}
\label{lem:homology}
Let $G$ be a vigorous approximately full group. 
Then the following hold:
\begin{enumerate}
\item $\X_G$ is a well-defined abelian group under addition
\item the identity of $\X_G$ is $[\emptyset]_G$
\item the inverse of $[U]_G$ is $[U\mu \setminus (U \cup U\nu)]_G$ for $\mu, \nu \in G$ such that $U \subsetneq U\mu$ and $U\nu \subsetneq U\mu \setminus U$
\item $[U]_G + [\C \setminus U]_G = [\C]_G$.
\end{enumerate}
\end{lemma}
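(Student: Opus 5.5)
The statement just given, Lemma~\ref{lem:homology}, is quoted from \cite[Proposition~5.8 \& Lemma~5.9]{BleakElliottHyde24}, so in the paper it would simply be cited. If I were to prove it directly, I would begin with two existence facts that come from vigour. The first: for any proper nonempty clopen $U$ there is $\mu \in G$ with $U \subsetneq U\mu$. To get this, choose a clopen $A \supsetneq U$ with $A \neq \C$, and use vigour to find $\gamma \in G_A$ moving $A\setminus U$ into a proper clopen subset of itself. Setting $\mu = \gamma^{-1}$ then gives $U\mu \supsetneq U$. The second fact: given proper clopens $U,V$, we can find $\mu,\nu$ with $U\mu\cap V\nu=\emptyset$ and $U\mu\cup V\nu\neq\C$. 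This follows from vigour over $\C$, which lets us shrink $U$ and $V$ into disjoint small clopen pieces.

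\textbf{Well-definedness of addition (the main obstacle).} I must show that if $U\mu\sqcup V\nu$ and $U\mu'\sqcup V\nu'$ are both proper, then they lie in the same $G$-orbit. The natural candidate map is $\mu^{-1}\mu'$ on $U\mu$ and $\nu^{-1}\nu'$ on $V\nu$. Approximate fullness lets us extend such a piecewise map to an element of $G$, but only if the complements also match up. So the strategy is to arrange a third piece, a clopen $W$, together with some $\gamma$ sending $\C\setminus(U\mu\sqcup V\nu)$ onto $\C\setminus(U\mu'\sqcup V\nu')$. Then approximate fullness with $k=3$, omitting the third piece $j=3$, yields $g\in G$ agreeing with the first two maps. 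This $g$ carries $U\mu\sqcup V\nu$ onto $U\mu'\sqcup V\nu'$.

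To remove the need for $\gamma$ to map the complements exactly, I would first reduce to the case where both unions lie inside a common small clopen set $A$ whose complement is large. Vigour lets us conjugate both configurations into $A$. Within $\C$, approximate fullness allows the third map to be anything, since the omitted index $j$ imposes no constraint. Hence only the cardinality-free condition $\C=\bigsqcup U_i=\bigsqcup U_i\gamma_i$ is needed for the two given pieces and the leftover. I would take the leftover's image to be the complement of the images; this complement is automatically clopen. The hypothesis of approximate fullness needs some $\gamma_3\in G$ mapping leftover to leftover, so I need $[\C\setminus X]_G = [\C\setminus X']_G$, which I would again get from vigour, since both complements are large. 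Settling this step is the delicate point, and I would check it carefully against the precise statement in \cite{BleakElliottHyde24}.

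\textbf{Group axioms.} Commutativity and associativity are immediate from the definition once well-definedness holds. For (ii), let $D = U\gamma\setminus U \in [\emptyset]_G$. Then $[X]+[D]$ is represented by placing $X$ inside $U$ (after conjugation), which gives $X\sqcup D$ inside $U\gamma$. Well-definedness lets us replace the representative so that $X\sqcup D$ is the $\gamma$-image of $X\sqcup$ (nothing), and so $[X]_G$ is unchanged. Here I would use the fact that $[U]_G+[U\gamma\setminus U]_G=[U\gamma]_G=[U]_G$ for $U\subseteq$ a suitable set, and transfer this identity via vigour. This also shows that $[\emptyset]_G$ is a single class. For (iii), $U\sqcup U\nu\sqcup (U\mu\setminus(U\cup U\nu)) = U\mu$. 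Hence $[U]+[U]+(\text{candidate}) = [U]$, and cancelling via (ii) gives $[U]+\text{candidate}=[\emptyset]$. For (iv), I would compute $[U]+[\C\setminus U]$ by shrinking $\C\setminus U$ by $\gamma$ so that it becomes $\C\setminus U\gamma'$ with $U\subsetneq U\gamma'$. Its sum with $U$ is then $\C\setminus(U\gamma'\setminus U)\in[\C]_G$.
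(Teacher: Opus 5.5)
There is a genuine gap, and it is exactly the step you flagged as delicate: well-definedness of addition. Write $X=U\mu\sqcup V\nu$ and $X'=U\mu'\sqcup V\nu'$. To apply approximate fullness to the pieces $U\mu$, $V\nu$, $\C\setminus X$, you need some element of $G$ carrying $\C\setminus X$ onto $\C\setminus X'$. Your earlier remark that the omitted index ``imposes no constraint'' is mistaken: the hypothesis still demands $\gamma_j\in G$ with $U_j\gamma_j$ equal to the remaining target, and only agreement on $U_j$ is dropped in the conclusion. But $[\C\setminus X]_G=[\C\setminus X']_G$ is equivalent to $[X]_G=[X']_G$, which is the very statement being proved. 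The justification you offer, that vigour puts large sets in the same orbit, is also false. Vigour only maps one clopen set \emph{into} another, and says nothing about when two sets lie in the same orbit, however large they are. For example, in the Higman--Thompson group $V_n$ with $n\geq 3$ (which is vigorous and approximately full), the number of cones modulo $n-1$ is an orbit invariant. So $\X_G\cong\Z/(n-1)$, and clopen sets with large complements can lie in different orbits. Nesting both configurations inside a small set $A$ does not change this.

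The missing idea is to make the two configurations \emph{disjoint}, so that the leftover piece can be mapped by the identity. By vigour you may move $X$ and $X'$, keeping their pieces in $[U]_G$ and $[V]_G$, so that $X\cap X'=\emptyset$ and $X\cup X'\neq\C$. Write $X=U_1\sqcup V_1$ and $X'=U_2\sqcup V_2$ with $U_1g=U_2$ and $V_1h=V_2$. Take the partition $U_1,V_1,U_2,V_2,R$ with $R=\C\setminus(X\sqcup X')$, and the maps $g,h,g^{-1},h^{-1},1\in G$; their images again partition $\C$. Approximate fullness, omitting $R$, then gives $\gamma\in G$ with $X\gamma=X'$.

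There are also some smaller points. In your first existence fact the direction is reversed: if $\gamma\in G_A$ maps $A\setminus U$ strictly into itself, then $U\subsetneq U\gamma$, so take $\mu=\gamma$ rather than $\gamma^{-1}$.

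Your ``transfer via vigour'' in (ii) should be made precise as follows. Vigour makes $a+x=b$ solvable in $\X_G$ for all $a,b$: shrink a representative of $a$ strictly inside one of $b$ and take the difference. A commutative semigroup with this property is a group. Then $[U]_G+[U\gamma\setminus U]_G=[U\gamma]_G=[U]_G$ forces $[U\gamma\setminus U]_G$ to be the identity, so $[\emptyset]_G$ is a single orbit and is the identity.

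In (iii) no cancellation is needed. Taking $U\nu$ as the representative of $[U]_G$ gives $[U]_G+[U\mu\setminus(U\cup U\nu)]_G=[U\mu\setminus U]_G=[\emptyset]_G$ directly. Your argument for (iv) is fine.
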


The next result, which is \cite[Lemma~5.13]{BleakElliottHyde24}, highlights the utility of the group $\X_G$.

\begin{lemma}
\label{lem:partition}
Let $G$ be a vigorous approximately full group. 
Let $X_1, \dots, X_k \in \X_G$. 
There exist $U_1 \in X_1$, \dots, $U_k \in X_k$ such that $\mathfrak{C} = \bigsqcup_{i=1}^{k} U_i$ if and only if $\sum_{i=1}^k X_i = [\C]_G$.
\end{lemma}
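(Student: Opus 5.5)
We argue by induction on $k$. The inclusions $U_i \in X_i$ are automatic; the substance is the equivalence between disjoint covers and sums in $\X_G$.

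\emph{Forward direction.} Suppose $\C = \bigsqcup_{i=1}^k U_i$ with $U_i \in X_i$; in particular each $U_i$ is a proper nonempty clopen set. If $k=1$ then $U_1=\C$ is not proper, so the hypothesis forces $k \geq 2$. If $k=2$, then $U_2 = \C\setminus U_1$, and Lemma~\ref{lem:homology}(iv) gives $X_1+X_2=[\C]_G$. For $k \geq 3$, put $W = U_1 \cup \dots \cup U_{k-1}$. This is a proper nonempty clopen set, because its complement is $U_k \neq \emptyset$. Since the $U_i$ are disjoint and $W \neq \C$, we can apply the definition of addition with $\mu=\nu=1$ repeatedly to obtain $[W]_G = X_1+\dots+X_{k-1}$; each partial union is also proper. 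Now $U_k = \C\setminus W$, so Lemma~\ref{lem:homology}(iv) gives $\sum_{i=1}^k X_i = [W]_G + [\C\setminus W]_G = [\C]_G$.

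\emph{Reverse direction.} Assume $\sum_{i=1}^k X_i = [\C]_G$. We first reduce to the case $k=2$. Suppose $k \geq 3$ and set $Y = X_1+\dots+X_{k-1}$. Since $\X_G$ consists only of classes of proper nonempty clopen sets, we may choose $W \in Y$. The relation $Y + X_k = [\C]_G$ then falls under the $k=2$ case, which yields $W' \in Y$ and $U_k \in X_k$ with $\C = W' \sqcup U_k$. Next we realise $Y$ inside $W'$. Equip $W'$ with the restricted group $G_{W'}$; by Lemma~\ref{lem:restriction} and the approximate fullness inherited by rigid stabilisers, this is vigorous over $W'$. We then need the analogue of the present lemma relative to $W'$: if $X_1+\dots+X_{k-1}=[W']_G$, then $W'$ splits as $\bigsqcup_{i<k} U_i$ with $U_i\in X_i$. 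To avoid setting up a relative homology group, we proceed directly. Pick disjoint representatives $V_1,\dots,V_{k-1}$ of $X_1,\dots,X_{k-1}$ whose union $V$ is proper. This is possible by vigour: shrink sets into small disjoint clopen regions using elements of $G$. By the definition of addition, $V \in Y$, so $V\gamma = W'$ for some $\gamma\in G$. Setting $U_i = V_i\gamma$ gives $W'=\bigsqcup_{i<k}U_i$, and together with $U_k$ this completes the partition.

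\emph{The case $k=2$.} This is the key step: from $X_1+X_2=[\C]_G$ we must produce $U_1\in X_1$ with $\C\setminus U_1\in X_2$. Take any $U_1\in X_1$. By Lemma~\ref{lem:homology}(iv), $[\C\setminus U_1]_G = [\C]_G - X_1 = X_2$, using cancellation in the abelian group $\X_G$ from Lemma~\ref{lem:homology}(i). Hence $U_2=\C\setminus U_1$ lies in $X_2$, which settles the case. The one point needing care is that $[\C]_G$ really is an element of $\X_G$ with this meaning, i.e.\ that Lemma~\ref{lem:homology}(iv) applies to every proper nonempty clopen $U$. The main obstacle overall is the multi-set realisation in the inductive step: we must choose disjoint representatives with proper union and verify that iterated addition is independent of the choices made. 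We expect to handle this with the well-definedness statement in Lemma~\ref{lem:homology}(i) together with vigour. Since $k \geq 3$ sets, each shrunk into a small region, cannot exhaust $\C$, the union is guaranteed to be proper.
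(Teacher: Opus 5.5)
Your proof is correct for $k \geq 2$. The paper does not prove this lemma; it quotes it from \cite[Lemma~5.13]{BleakElliottHyde24}. Your argument is the natural one from Lemma~\ref{lem:homology}. For the forward direction, you iterate the definition of addition with $\mu=\nu=1$ and finish with Lemma~\ref{lem:homology}(iv). For the converse, you use (iv) together with cancellation in the abelian group $\X_G$.

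Some points to tidy.

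\emph{The case $k=1$.} The converse is false here. $[\C]_G$ is itself an element of $\X_G$, so $X_1=[\C]_G$ satisfies the sum condition, yet no representative of $X_1$ equals $\C$. The statement must therefore be read with $k\geq 2$. You should say this explicitly, not only note the issue in the forward direction.

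\emph{The detour through $G_{W'}$.} Delete it. Lemma~\ref{lem:restriction} needs $G$ to be simple, which is not assumed here. The claim that approximate fullness passes to rigid stabilisers is also unproved. You never actually use either, so nothing breaks.

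\emph{Shortening the step $k\geq 3$.} First make the choice of representatives precise. Partition $\C$ into nonempty clopen sets $A_1,\dots,A_k$. For $i<k$, use vigour with $A=\C$ to move a representative of $X_i$ into $A_i$, giving pairwise disjoint $V_i\in X_i$ whose union $V$ misses $A_k$. Then apply your own $k=2$ argument to the representative $V$ of $Y$. It gives $\C\setminus V\in X_k$ directly, so translating $V$ onto some other $W'\in Y$ by $\gamma$ is unnecessary.

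\emph{The ``main obstacle''.} Independence of the iterated sum from the choices made is not an obstacle. It is already supplied by the well-definedness and associativity in Lemma~\ref{lem:homology}(i), since every partial union of the $V_i$ is disjoint from the next set and lies inside the proper set $V$.
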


We conclude this section by introducing some terminology that we will use in the proofs of our main theorems.

Let $\emptyset \subsetneq U \subsetneq \C$ be clopen.
Let $\gamma_1, \dots, \gamma_k \in G$ such that $U\gamma_i \cap U\gamma_j \neq \emptyset$ if and only if $i=j$. 
For $g \in \Sym(k)$, the element of $\Homeo(\C)$ that we say \emph{enacts the permutation} $U\gamma_i \mapsto U\gamma_{ig}$ is 
\[
\pi \in \Homeo(\C)_{\bigsqcup_{i=1}^k U\gamma_i} \text{ defined as } \pi|_{U\gamma_i} = \gamma_i^{-1}\gamma_{jg}|_{U\gamma_i} \text{for $1 \leq i \leq k$},
\]
which we denote by the permutation $U\gamma_i \mapsto U\gamma_{ig}$ written in cycle notation.
Beware that this notation depends on the description of each set as $U\gamma_i$ and not just on the set $U\gamma_i$ itself.

For the following lemma, which is \cite[Lemma~5.12]{BleakElliottHyde24}, we say that $X \in \X_G$ is \emph{even} if there exists $Y \in \X_G$ such that $X = 2Y$.

\begin{lemma}
\label{lem:existence}
Let $G$ be a vigorous approximately full group. 
Let $\emptyset \subsetneq U \subsetneq \C$ be clopen.
Let $\gamma_1, \dots, \gamma_k \in G$ such that $U\gamma_i \cap U\gamma_j \neq \emptyset$ if and only if $i=j$. 
Let $g \in \Sym(k)$. 
If $g \in \Sym(k)$ is an even permutation or $[U]_G \in \X_G$ is even, then the element enacting the permutation $U\gamma_i \mapsto U\gamma_{ig}$ is contained in $G$.
\end{lemma}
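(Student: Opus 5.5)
Since Lemma~\ref{lem:existence} is quoted from \cite[Lemma~5.12]{BleakElliottHyde24}, one option is simply to cite it. To prove it directly, I would split the element $\pi$ enacting $U\gamma_i \mapsto U\gamma_{ig}$ into a product of elements that are visibly in $G$. Since $\pi \mapsto g$ respects composition, $\pi$ factors as a product of elements enacting transpositions when $g$ is odd, and of elements enacting $3$-cycles (or double transpositions) when $g$ is even. So it suffices to treat these basic permutations.

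\textbf{Double transpositions and $3$-cycles.} Consider the element enacting $(U\gamma_1\ U\gamma_2\ U\gamma_3)$. Set $W = U\gamma_1 \sqcup U\gamma_2 \sqcup U\gamma_3$.
\begin{enumerate}
\item If $W \neq \C$, apply approximate fullness to the partition $\C = U\gamma_1 \sqcup U\gamma_2 \sqcup U\gamma_3 \sqcup (\C \setminus W)$. Use the maps $\gamma_1^{-1}\gamma_2$, $\gamma_2^{-1}\gamma_3$, $\gamma_3^{-1}\gamma_1$ and the identity, with the distinguished index $j$ being the identity piece. This yields $\gamma \in G$ agreeing with the cycle on $W$. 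Its behaviour on $\C \setminus W$ is uncontrolled, but $\C \setminus W$ is mapped to itself, so it is harmless.
\item To remove that defect, I would use the standard trick that the $3$-cycle is a commutator of two double transpositions. Each double transposition can be realised on a four-piece partition plus complement, with the uncontrolled piece chosen to be some auxiliary clopen set. The commutator then cancels the uncontrolled part, since it is the same map on the same set in both factors.
\item When $W = \C$, I would first subdivide: write $U = U' \sqcup U''$ with both parts nonempty and handle the pieces separately. Vigour ensures the relevant auxiliary sets exist.
\end{enumerate}

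\textbf{Transpositions when $[U]_G = 2[Y]_G$.} By Lemma~\ref{lem:partition}, $U$ can be partitioned as $U = Y_1 \sqcup Y_2$ with $Y_1, Y_2 \in [Y]_G$. Compatibly, $U\gamma_a = Y_1\gamma_a \sqcup Y_2\gamma_a$ for $a = 1, 2$. The transposition $(U\gamma_1\ U\gamma_2)$ then equals the product of $(Y_1\gamma_1\ Y_1\gamma_2)$ and $(Y_2\gamma_1\ Y_2\gamma_2)$, which is a double transposition on four pieces. A double transposition is an even permutation of those four pieces. One adjustment is needed: the four pieces should be in one $G$-orbit of clopen sets with consistent identifications. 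To arrange this, I would compose with the element swapping $Y_1 \leftrightarrow Y_2$ inside $U$ via some $\delta \in G$ with $Y_1\delta = Y_2$, which exists because $[Y_1]_G = [Y_2]_G$. The previous case then applies.

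\textbf{Main obstacle.} The crux is the middle step: turning approximate fullness, which gives control on all but one piece, into exact control. This requires finding room in the complement, or creating it by vigour, so that the uncontrolled piece can be cancelled via a commutator. Care is needed when $W$ or $U$ is nearly all of $\C$.
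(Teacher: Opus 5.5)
There is a genuine gap at the step you yourself call the crux. Your overall architecture is sound: reduce via the homomorphism $g \mapsto \pi_g$ (where $\pi_g$ enacts $g$) to $3$-cycles, and to transpositions when $[U]_G$ is even; subdivide $U$ when three pieces cover $\C$; halve $U$ for transpositions. The paper gives no argument to compare against, since it simply quotes \cite[Lemma~5.12]{BleakElliottHyde24}.

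\textbf{The commutator step.} Suppose $a = \tilde a h_a$ and $b = \tilde b h_b$ come from approximate fullness, with $\tilde a, \tilde b$ the exact permutations of the pieces and $h_a, h_b$ the uncontrolled parts on a common auxiliary set $Z$. Then $[a,b] = [\tilde a,\tilde b][h_a,h_b]$. Approximate fullness is a pure existence statement, so $h_a$ and $h_b$ are unrelated homeomorphisms of $Z$ and $[h_a,h_b]$ need not be trivial. Your claim that the uncontrolled part is ``the same map on the same set in both factors'' is therefore unjustified. What makes cancellation work is that each uncontrolled part commutes with the \emph{other} factor. Split $\C \setminus W = Z_a \sqcup Z_b$ into nonempty clopen sets and apply approximate fullness to the five pieces $U\gamma_1, U\gamma_2, U\gamma_3, Z_a, Z_b$. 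For $a$, prescribe the identity on $Z_b$ and leave $Z_a$ free; for $b$, do the reverse. Then $h_a \in \Homeo(\C)_{Z_a}$ and $h_b \in \Homeo(\C)_{Z_b}$ commute with everything else in the word, so $[a,b] = [\tilde a,\tilde b]$ exactly.

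Your choice of double transpositions is also problematic. On four common pieces they form a Klein four-group, so their commutators are trivial, and producing a $3$-cycle this way needs five translates of $U$, which you would first have to manufacture. This is unnecessary because approximate fullness ignores parity. Take $\tilde a, \tilde b$ enacting $(U\gamma_1\ U\gamma_2)$ and $(U\gamma_1\ U\gamma_3)$, and use $[(1\,2),(1\,3)] = (1\,3\,2)$.

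\textbf{The transposition step.} Your ``adjustment'' of composing with an element swapping $Y_1$ and $Y_2$ inside $U$ is circular. That element enacts a transposition of two translates of $Y_1$, exactly the kind of element whose membership in $G$ is at issue, and $[Y_1]_G$ need not be even. No adjustment is needed, only relabelling. With $Y_2 = Y_1\delta$, the four pieces are the translates $Y_1\gamma_1$, $Y_1\gamma_2$, $Y_1\delta\gamma_1$, $Y_1\delta\gamma_2$ of $Y_1$. Since $(\delta\gamma_1)^{-1}\delta\gamma_2 = \gamma_1^{-1}\gamma_2$, the element enacting the double transposition $(Y_1\gamma_1\ Y_1\gamma_2)(Y_1\delta\gamma_1\ Y_1\delta\gamma_2)$ is literally the element enacting $(U\gamma_1\ U\gamma_2)$, so the even case applies directly.

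Finally, Lemma~\ref{lem:partition} partitions $\C$, not $U$. To split $U$ itself, write $[U]_G = 2Y$ and apply the lemma to $Y + Y + [\C \setminus U]_G = [\C]_G$. This gives $\C = Y_1' \sqcup Y_2' \sqcup (\C \setminus U)\eta$ for some $\eta \in G$, and pulling back by $\eta^{-1}$ gives $U = Y_1'\eta^{-1} \sqcup Y_2'\eta^{-1}$.
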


The following straightforward conjugation property is useful.

\begin{lemma}
\label{lem:conjugation}
Let $G$ be a vigorous approximately full group. 
Let $\emptyset \subsetneq U \subsetneq \C$ be clopen.
Let $\gamma_1, \dots, \gamma_k \in G$ such that $U\gamma_i \cap U\gamma_j \neq \emptyset$ if and only if $i=j$.
Let $\pi = (U\gamma_1 \ \dots \ U\gamma_k)$ and let $\gamma \in G$.
Then 
\[
\pi^\gamma = (U\gamma_1\gamma \ \dots \ U\gamma_k\gamma).
\]
\end{lemma}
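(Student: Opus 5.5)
Since $\pi$ is defined piecewise, I will verify the identity $\pi^\gamma = (U\gamma_1\gamma \ \dots \ U\gamma_k\gamma)$ directly on a partition of $\C$, computing both sides piece by piece. The right-hand side is the element enacting the cyclic permutation $U\gamma_i\gamma \mapsto U\gamma_{i+1}\gamma$ (indices mod $k$). Here each set is described as $U(\gamma_i\gamma)$ with the elements $\gamma_i\gamma \in G$. These sets are pairwise disjoint because $\gamma$ is a bijection, so the notation is well defined.

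First, I check where $\pi^\gamma = \gamma^{-1}\pi\gamma$ is supported. Since actions are on the right, a point $x$ is sent to $x\gamma^{-1}\pi\gamma$. If $x \notin \bigsqcup_i U\gamma_i\gamma$, then $x\gamma^{-1} \notin \bigsqcup_i U\gamma_i$, so $\pi$ fixes $x\gamma^{-1}$ and hence $x\pi^\gamma = x$. Likewise the right-hand side is in the rigid stabiliser of $\bigsqcup_i U\gamma_i\gamma$, so it also fixes $x$. So both sides agree off this union.

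Second, I compare the two sides on each piece. Take $x \in U\gamma_i\gamma$ and write $x = u\gamma_i\gamma$ with $u \in U$. By the definition of enacting the permutation (with $g$ the $k$-cycle $i \mapsto i+1$), $\pi$ acts on $U\gamma_i$ as $\gamma_i^{-1}\gamma_{i+1}$. Therefore
\[
x\gamma^{-1}\pi\gamma = u\gamma_i\,\gamma_i^{-1}\gamma_{i+1}\,\gamma = u\gamma_{i+1}\gamma.
\]
The right-hand side, by definition, acts on $U\gamma_i\gamma$ as $(\gamma_i\gamma)^{-1}(\gamma_{i+1}\gamma)$. Applied to $x$ this gives $u\gamma_i\gamma\gamma^{-1}\gamma_i^{-1}\gamma_{i+1}\gamma = u\gamma_{i+1}\gamma$, the same point. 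So the two homeomorphisms agree everywhere and the identity holds.

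There is essentially no obstacle. The only care needed is bookkeeping: keep the right-action convention, and note that the notation depends on the chosen descriptions $U\gamma_i\gamma$ rather than just the sets. The same computation works for an arbitrary $g \in \Sym(k)$, not only a cycle.
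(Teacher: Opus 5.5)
Your proof is correct and follows essentially the same route as the paper. The paper also first notes that $\Supp(\pi^\gamma) = \Supp(\pi)\gamma \subseteq \bigsqcup_i U\gamma_i\gamma$, and then verifies on each piece that $\pi^\gamma|_{U\gamma_i\gamma} = ((\gamma_i\gamma)^{-1}(\gamma_{ig}\gamma))|_{U\gamma_i\gamma}$, which is your pointwise computation written in terms of restrictions.
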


\begin{proof}
Write $g = (1 \ \cdots k) \in \Sym(k)$.
Note that 
\[
\Supp(\pi^\gamma) = \Supp(\pi)\gamma \subseteq (U\gamma_1 \sqcup \dots \sqcup U\gamma_k)\gamma = U\gamma_1\gamma \sqcup \dots \sqcup U\gamma_k\gamma.
\]
Moreover, for all $1 \leq i \leq k$, we have 
\[
\pi^\gamma|_{U\gamma_i\gamma} = \gamma^{-1}|_{U\gamma_i\gamma}(\pi\gamma)|_{U\gamma_i} = \gamma^{-1}|_{U\gamma_i\gamma}(\gamma_i^{-1}\gamma_{ig}\gamma)|_{U\gamma_i} = ((\gamma_i\gamma)^{-1}(\gamma_{ig}\gamma))|_{U\gamma_i\gamma}. \qedhere
\] 
\end{proof}

We conclude with an analogue of the fact that the alternating group $A_n$ is generated by the set of three-cycles.

\begin{lemma}
\label{lem:three-cycles}
Let $G \leq \Homeo(\C)$ be a simple vigorous group.
Let $\emptyset \subsetneq U \subsetneq \C$ be clopen and let
\[
X = \{ (U\gamma_1 \ U\gamma_2 \ U\gamma_3) \mid \text{$\gamma_1, \gamma_2, \gamma_3 \in G$ and $U\gamma_1 \sqcup U\gamma_2 \sqcup U\gamma_3 \subsetneq \C$}\}.
\]
Then $G = \< X \>$.
\end{lemma}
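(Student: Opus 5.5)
The plan is to show that $H = \< X \>$ is a nontrivial normal subgroup of $G$ and then invoke simplicity. This needs three preliminary checks.

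First, every element of $X$ lies in $G$. A simple vigorous group is approximately full by \cite[Theorem~4.18]{BleakElliottHyde24}, and a $3$-cycle is an even permutation. So Lemma~\ref{lem:existence} applies and puts $(U\gamma_1 \ U\gamma_2 \ U\gamma_3)$ in $G$.

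Second, $X$ is nonempty, so $H \neq 1$. Since $U \subsetneq \C$ is a nonempty proper clopen set, we choose a nonempty clopen $A \subsetneq \C \setminus U$. Vigour, applied to the clopen set $A$ (or to $\C$ in the degenerate cases), gives $\gamma_2, \gamma_3 \in G$ with $U\gamma_2$ and $U\gamma_3$ disjoint from $U$ and from each other, and with $U \sqcup U\gamma_2 \sqcup U\gamma_3 \neq \C$. Concretely, we first pick disjoint nonempty clopen sets $B_1, B_2 \subseteq \C \setminus U$ whose union is not all of $\C \setminus U$. Then, using vigour on $A = \C$ with $B = U$ and $C = B_i$, we obtain $\gamma_i \in G$ with $U\gamma_i \subseteq B_i$. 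This works provided $U$ and $B_i$ are proper subsets of $\C$, which they are. Hence $(U \ U\gamma_2 \ U\gamma_3) \in X$, and it is nontrivial because it moves $U$ onto a disjoint set.

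Third, $X$ is closed under conjugation by $G$. By Lemma~\ref{lem:conjugation},
\[
(U\gamma_1 \ U\gamma_2 \ U\gamma_3)^\gamma = (U\gamma_1\gamma \ U\gamma_2\gamma \ U\gamma_3\gamma).
\]
Here $\gamma_i\gamma \in G$, and $U\gamma_1\gamma \sqcup U\gamma_2\gamma \sqcup U\gamma_3\gamma = (U\gamma_1 \sqcup U\gamma_2 \sqcup U\gamma_3)\gamma \subsetneq \C$, because $\gamma$ is a bijection. So this conjugate lies in $X$. Therefore $H$ is a nontrivial normal subgroup of the simple group $G$, and $H = G$.

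The only real obstacle is the existence step, which has two parts. Approximate fullness is needed to apply Lemma~\ref{lem:existence}, and it follows from the cited structure theorem for simple vigorous groups. Beyond that we must make sure there is room to fit three disjoint translates of $U$ with nonempty complement. When $U$ is "large", say $\C \setminus U$ is small, this is not automatic for the particular $U$. However, vigour lets us shrink the image of any proper clopen set into any other nonempty proper clopen set, so disjoint translates exist regardless of the size of $U$. The choice $B_1, B_2$ above handles this uniformly.
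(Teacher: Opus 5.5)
Your proof is correct and follows the paper's argument: you show that $X$ is a nontrivial conjugation-invariant subset of $G$ using Lemma~\ref{lem:conjugation}, and then conclude by simplicity. You are slightly more explicit than the paper, which leaves implicit two points you spell out: that $X \subseteq G$ (via approximate fullness and Lemma~\ref{lem:existence}), and how vigour produces the disjoint translates.
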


\begin{proof}
Since $G$ is vigorous, there certainly exist $\gamma_1, \gamma_2, \gamma_3 \in G$ such that $U\gamma_1 \sqcup U\gamma_2 \sqcup U\gamma_3 \subsetneq \C$, so $X$ contains nontrivial elements. 
Now let $\gamma_1, \gamma_2, \gamma_3 \in G$ such that $U\gamma_1 \sqcup U\gamma_2 \sqcup U\gamma_3 \subsetneq \C$ and let $\gamma \in G$.
Then, by Lemma~\ref{lem:conjugation},
\[
(U\gamma_1 \ U\gamma_2 \ U\gamma_3)^\gamma = (U\gamma_1\gamma \ U\gamma_2\gamma \ U\gamma_3\gamma) \in X,
\]
so $X$ is a normal subset of $G$.
The simplicity of $G$ now gives $G = \< X \>$.
\end{proof}

\subsection{Generating pairs for simple vigorous groups}
\label{ss:prelims_torsion}

This section is dedicated to stating a version of \cite[Theorem~5.15]{BleakElliottHyde24}, which includes further detail that follows from the proof given in \cite{BleakElliottHyde24}. We outline the proof.

\begin{proposition}
\label{prop:torsion}
Let $G \leq \Homeo(\C)$ be a finitely generated simple vigorous group. 
Let $m, n, r \geq 2$ be integers. 
Then there exist $\rho, \sigma \in G$ such that the following hold
\begin{enumerate}
\item $|\rho| = m$
\item $|\sigma| \geq n$
\item $\rho$ has small support
\item $G = \< \rho, \sigma \>$
\item $[\sigma^k,(\rho^\epsilon\sigma)^k] \neq 1$ for all $k \leq r$ and $\epsilon \in \{ +1, -1 \}$.
\end{enumerate} 
Moreover, there exists an integer $N \geq n$ (depending on $m$, $n$, $r$ and $G$) such that for each integer $t \geq N$ we can chose the element $\sigma$ to have order $t(2t+1)$. 
\end{proposition}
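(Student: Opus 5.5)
I would follow the strategy of \cite[Theorem~5.15]{BleakElliottHyde24}, building $\rho$ and $\sigma$ from a suitable configuration of clopen sets and tracking the extra conditions along the way.

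\textbf{Setting up the construction.} Since $G$ is finitely generated, simple and vigorous, it is generated by elements of small support. Compactness then gives a finite set $\gamma_1, \dots, \gamma_s$ of small-support generators. Using vigour, we conjugate so that each $\gamma_i$ is supported in a clopen set $U_i$, where the $U_i$ are pairwise disjoint copies, arranged along an orbit, of one small clopen set $W$ with $[W]_G$ even. Evenness can always be arranged by halving inside $\X_G$ via vigour and Lemma~\ref{lem:homology}. We can also arrange that the union of all these sets is a proper subset of $\C$.

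\textbf{Choosing $\sigma$ and $\rho$.} Take $\sigma$ to be an element that enacts a long permutation of translates $W\delta_j$, $1\le j\le L$, where $L$ is much larger than $s$, $m$, $n$ and $r$. We choose the cycle structure to be one $t$-cycle and one $(2t+1)$-cycle, so that $|\sigma| = t(2t+1)$; this needs $t \ge N$ so that there are enough slots. Lemma~\ref{lem:existence} guarantees that $\sigma \in G$, either because the permutation is even or because $[W]_G$ is even. Next take $\rho$ to be the product of $m$-cycles $(W\delta_a\ W\delta_b\ \cdots)$ linking a few slots of the two cycles, together with a piece that carries a prescribed $\gamma_i$-decorated copy. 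Then $|\rho| = m$, and $\rho$ has small support because its support misses some of the $W\delta_j$.

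\textbf{Generation.} Conjugating $\rho$ by powers of $\sigma$ produces elements supported on shifted slots. Commutators of these with $\rho$ isolate, first, the $3$-cycles of translates of a subset of $W$, and then each decorated generator $\gamma_i$ transported into a fixed slot. Lemma~\ref{lem:three-cycles}, applied within the rigid stabiliser of a proper clopen set via Lemmas~\ref{lem:restriction} and~\ref{lem:union}, gives that $H = \langle \rho, \sigma\rangle$ contains $G_{U}$ and $G_{V}$ for overlapping proper clopen sets $U$ and $V$ covering $\C$. Lemma~\ref{lem:union} then gives $H = G$.

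\textbf{Remaining conditions.} We have $|\sigma| \ge n$ since $t \ge N \ge n$. For (v), note that $\sigma^k$ and $(\rho^{\pm1}\sigma)^k$ with $k\le r$ act on the slots as explicit permutations. Because $\rho$ moves only a few slots and $L \gg r$, we can find a slot $W\delta_j$ that $\sigma^k$ moves to a slot on which $(\rho^\epsilon\sigma)^k$ acts differently. This shows the commutator is nontrivial.

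\textbf{Main obstacle.} I expect the generation step to be hardest: designing $\rho$ so that commutator tricks recover each $\gamma_i$ while $\rho$ keeps exact order $m$ for every $m \ge 2$. The delicate case is $m=2$, where parity constraints interact with Lemma~\ref{lem:existence}. I would try first to handle this through the evenness of $[W]_G$.
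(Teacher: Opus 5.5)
There is a genuine gap, and it sits exactly where you flag your main obstacle: your construction gives no way for $\rho$ to have order exactly $m$ while still carrying enough information to generate. Your ``$\gamma_i$-decorated copy'' could mean a disjoint conjugate of $\gamma_i$, or an $m$-cycle of slots twisted by $\gamma_i$. Either way, $|\rho|$ becomes divisible by $|\gamma_i|$, since the $m$-th power of a twisted $m$-cycle is a conjugate of the twist on each slot. The small-support generators $\gamma_i$ may have infinite order, so ``Then $|\rho| = m$'' does not follow.

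The missing idea is to change the generating set before building $\rho$.
\begin{enumerate}
\item Pass to $G_{C \cup D}$ for a proper clopen set $C \cup D$. By Lemma~\ref{lem:restriction} it is again finitely generated, simple and vigorous.
\item Its elements of order $m$ form a normal subset, nonempty because $\Alt(m+2)$ embeds by Lemma~\ref{lem:existence}. By simplicity they generate. Commutators of two such elements again form a nontrivial normal subset, so they generate too.
\item Finite generation then gives $\psi_1, \dots, \psi_j, \omega_1, \dots, \omega_j$ of order $m$ with $G_{C \cup D} = \< [\psi_1,\omega_1], \dots, [\psi_j,\omega_j] \>$.
\item Take $\rho = \prod_{i=1}^{j} \psi_i^{\sigma^i}\omega_i^{\sigma^{i(j+1)}}$. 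This is a product of disjointly supported elements of order $m$, so $|\rho| = m$ for free. The positions $i$ and $i(j+1)$ are chosen so that $[\rho^{\sigma^{-k}}, \rho^{\sigma^{-k(j+1)}}] = [\psi_k, \omega_k]$, by \cite[Lemma~5.14]{BleakElliottHyde24}.
\end{enumerate}
This also replaces your generation step, which as written is circular. Lemma~\ref{lem:three-cycles} needs \emph{all} $3$-cycles $(U\gamma_1\ U\gamma_2\ U\gamma_3)$ with $\gamma_1,\gamma_2,\gamma_3 \in G$, and you cannot extract these from finitely many commutators unless $H$ is already large. Likewise, conjugating the $\gamma_i$ into slots by arbitrary elements of $G$ loses generation unless those conjugators lie in $H$.

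The slot structure for $\sigma$ also needs different bookkeeping. If your $3t+1$ slots are translates of a single $W$ and do not cover $\C$, then $\sigma$ fixes the complement and Lemma~\ref{lem:union} never reaches it. Making them cover $\C$ requires $(3t+1)[W]_G = [\C]_G$, which need not be solvable for every $t$. The paper instead fixes $C$ and $D$ with $[C]_G = [\C]_G$ and $[D]_G = -2[C]_G$. Then $(2t+1)[C]_G + t[D]_G = [\C]_G$ for every $t$, and Lemma~\ref{lem:partition} partitions $\C$ into $2t+1$ translates of $C$ and $t$ translates of $D$. The element $\sigma$ cycles each family: the $(2t+1)$-cycle is an even permutation, and $[D]_G$ is even. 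Coprimality of $t$ and $2t+1$ makes the overlap graph of the sets $(C \cup D)\sigma^k$ connected, which gives $G = \< G_{C \cup D}, \sigma \>$.

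For (v), $\rho$ in this construction moves no slots at all, so your slot-permutation argument does not apply. Instead, write $(\rho^\epsilon\sigma)^k = \chi\sigma^k$ with $\chi = \rho^\epsilon\rho^{\epsilon\sigma^{-1}}\cdots\rho^{\epsilon\sigma^{-(k-1)}}$, so that $[\sigma^k,(\rho^\epsilon\sigma)^k] = [\sigma^k,\chi]^{\sigma^k}$. The bound $t \geq N \geq rj(j+1)$ ensures that $\chi$ acts nontrivially on $(C \cup D)\sigma^{j(j+1)}$ while $\chi^{\sigma^{-k}}$ does not, so the commutator is nontrivial.
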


\enlargethispage{3pt}
\begin{proof}
Fix a clopen set $\emptyset \subsetneq C \subsetneq \C$ such that $[C]_G = [\C]_G$ and a clopen set $\emptyset \subsetneq D \subsetneq \C \setminus C$ such that $[D]_G = -2 [C]_G$. 
By Lemma~\ref{lem:restriction}, $G_{C \cup D} \in \mathcal{K}_{C \cup D}^{\text{fg}}$ since $G \in \mathcal{K}^{\text{fg}}$.
Let 
\[
P = \{ \alpha \in G_{C \cup D} \mid |\alpha| = m \}.
\]
Note $P \neq \emptyset$ since $\Alt(m + 2)$ embeds in $G_{C \cup D}$ by Lemma~\ref{lem:existence}, and $G_{C \cup D} = \< P \>$ since $P$ is a nontrivial normal subset of $G_{C \cup D}$. 
Let 
\[
Q = \{ [\alpha, \beta] \mid \alpha, \beta \in P \}.
\] 
Note $Q \neq \{ 1 \}$ since $P$ is a generating set for a nonabelian group, and $G_{C \cup D} = \< Q \>$ since $Q$ is a nontrivial normal subset of $G_{C \cup D}$. 
Since $G_{C \cup D}$ is finitely generated, a finite subset of $Q$ must generate $G_{C \cup D}$. 
Therefore, we can fix $\psi_1, \dots, \psi_j, \omega_1, \dots, \omega_j \in G_{C \cup D}$ of order $m$ such that $G_{C \cup D} = \< [\psi_1,\omega_1], \dots, [\psi_j,\omega_j] \>$. 

Write $N = \max{\{rj(j+1), n}\}$ and fix $t \geq N$.
Since $(2t+1)[C]_G + t[D]_G = [C]_G = [\C]_G$, Lemma~\ref{lem:partition} implies that $\C = \bigsqcup_{A \in X} A \sqcup \bigsqcup_{B \in Y} B$ where $X \subseteq [C]_G$ with $C \in X$ and $|X| = 2t+1$ and $Y \subseteq [D]_G$ with $D \in Y$ and $|Y| = t$. 
By Lemma~\ref{lem:existence}, since an $(2t+1)$-cycle is even, fix $\sigma_1 \in G_{\bigsqcup_{A \in X}A}$ enacting an $(2t+1)$-cycle on $X$, and since $[D]_G$ is even, fix $\sigma_2 \in G_{\bigsqcup_{B \in Y}B}$ enacting an $t$-cycle on $Y$.
Let $\sigma = \sigma_1 \sigma_2$, noting that $|\sigma| = \mathrm{lcm}(2t+1,t) = (2t+1)t \geq n$, so (ii) holds, as does the stronger claim in the final sentence of the statement.

Note $\C = \bigcup_{k=1}^{(2t+1)t} (C \cup D)\sigma^k$ and $(C \cup D)\sigma^k \cap (C \cup D)\sigma^l = \emptyset$ for $1 \leq k < l \leq t$. 
Let $\Gamma$ be the graph with vertex set $\{ (C \cup D)\sigma^k \mid k \in \Z \}$ and where $u$ and $v$ are adjacent if and only if $u \cap v \neq \emptyset$. 
Note $\Gamma$ is finite as $|\sigma| < \infty$ and $\Gamma$ is connected as $|\sigma_1| = 2t+1$ and $|\sigma_2| = t$ are coprime.
Therefore, by repeated application of Lemma~\ref{lem:union}, we see that $G = \< G_{C \cup D}, \sigma \>$.

Let $\rho = \prod_{i=1}^{j} \psi_i^{\sigma^i}\omega_i^{\sigma^{i(j+1)}}$. 
The elements $\psi_1^{\sigma^1}, \dots, \psi_j^{\sigma^j}, \omega_1^{1(j+1)}, \dots, \omega_j^{j(j+1)}$ have pairwise disjoint support, so, in particular, $|\rho| = m$, so (i) holds.
Using \cite[Lemma~5.14]{BleakElliottHyde24}, $[\rho^{\sigma^{-k}},\rho^{\sigma^{-k(j+1)}}] = [\psi_k,\omega_k]$ for all $1 \leq k \leq j$. 
Therefore, (iv) holds since 
\[
G = \< G_{C \cup D}, \sigma \> = \< [\psi_1,\omega_1], \dots, [\psi_j,\omega_j], \sigma \> = \< \sigma, \rho \>.
\]

Recall that $\C = \bigsqcup_{i=1}^{t}(C \cup D)^{\sigma^i} \sqcup \bigsqcup_{i=t+1}^{2t+1} C^{\sigma^i}$ and $t \geq rj(j+1)$. 
Note for $1 \leq i \leq t$, the element $\rho$ acts nontrivially on $(C \cup D)^{\sigma^i}$ if and only if $i \in [1,j] \cup [1,j](j+1) \subseteq [1,j(j+1)]$.
In particular, $\rho$ has small support, so (iii) holds.
Moreover, for all $1 \leq k \leq r$ and $\epsilon \in \{ +, - \}$, the element $\chi = \rho^\epsilon\rho^{\epsilon\sigma^{-1}} \dots \rho^{\epsilon\sigma^{-(k-1)}}$ acts nontrivially on $(C \cup D)\sigma^{j(j+1)}$ but $\chi^{\sigma^{-k}}$ does not, so $\chi \neq \chi^{\sigma^{-k}}$.
Therefore, (v) holds since
\vspace{-2pt}
\[
[\sigma^k, (\rho^\epsilon\sigma)^k] = [\sigma^k,\chi\sigma^k] = [\sigma^k,\chi]^{\sigma^k} \neq 1. \qedhere
\]
\end{proof}

\subsection{\boldmath Generation criterion for \texorpdfstring{$V$}{V}} 
\label{ss:prelims_generation_v}

We now focus on Thompson's group $V$, first introduced in \cite{ThompsonHandwritten}.
Before stating a generation criterion for $V$, let us recall some convenient notation for describing elements of $V$.

For $u = u_1u_2 \dots u_k \in \{ 0, 1 \}^*$, we write $|u| = k$ and $u\C= \{ uw \mid w \in \C \}$. 
We say that $u, v \in \{ 0, 1 \}^*$ are \emph{incomparable}, and write $u \perp v$, if $u\C \cap v\C = \emptyset$, and we say that a finite set $A \subseteq \{0,1\}^*$ is a \emph{basis} of $\C$ if $\{ u\C \mid u \in A \}$ is a partition of $\C$. 

Thompson's group $V$ is the group of homeomorphisms $\gamma \in \Homeo(\C)$ for which there exists a \emph{basis pair}, namely a bijection $g\: A \to B$ between two bases $A$ and $B$ of $\C$ such that $(uw)\gamma = (ug)w$ for all $u \in A$ and $w \in \C$. 
For $\gamma \in V$ and $u \in \{ 0, 1 \}^*$, we write $\gamma_{[u]}$ for the element of $V$ defined, for each $w \in \C$, as follows
\begin{equation}
\label{eq:restriction}
w \gamma_{[u]} = \left\{
\begin{array}{ll}
u (v \gamma)   & \text{if $w=uv$} \\
w              & \text{if $w \not\in u\C$.} \\
\end{array}
\right.
\end{equation}
In our notation for rigid stabilisers, 
$
V_{u\C} = \{ \gamma_{[u]} \mid \gamma \in V \}
$.
A basis can naturally be identified with the set of leaves of a binary rooted tree, giving the familiar \emph{tree pairs}, as in Figure~\ref{fig:tree_pairs}.

An element $\gamma \in V$ has finite order if and only if it admits a basis pair $(A,g,A)$ (see \cite[Lemma~17]{Scott92}, for example). 
Following Bleak and Quick \cite{BleakQuick17}, in this case, we denote $\gamma \in G$ by the permutation $g \in \mathrm{Sym}(A)$. 
(This is closely related to the permutation notation in Section~\ref{ss:prelims_homology}.)
For example, the elements $\alpha, \beta, \gamma \in V$ defined by the tree pairs in Figure~\ref{fig:tree_pairs} all have order two and they are expressed in permutation notation in Example~\ref{ex:involutions}.

\begin{theorem} 
\label{thm:criterion}
Let $A = \{ u_1, \dots, u_n \}$ be a basis of $\C$ of size $n \geq 3$. Let $B = \{ u_1, \dots, u_{n-1}, u_n0, u_n1 \}$. Then $V = \< \Alt(A), \Alt(B) \>$.
\end{theorem}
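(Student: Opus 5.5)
Write $H = \< \Alt(A), \Alt(B) \>$; the goal is $H = V$. I would prove this in two stages. First, $H$ contains $\Alt(D)$ for every basis $D$ obtained from $A$ by a finite sequence of simple expansions, that is, by repeatedly replacing a leaf $d$ by $d0, d1$. Second, this family of alternating groups already contains a generating set of $V$ of the kind supplied by Lemma~\ref{lem:three-cycles}.

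For the second stage, assume the first. Fix the cone $U = 00\C$ and consider a $3$-cycle $(U\gamma_1 \ U\gamma_2 \ U\gamma_3)$ with $\gamma_i \in V$ and $U\gamma_1 \sqcup U\gamma_2 \sqcup U\gamma_3 \subsetneq \C$. Refine a basis of $U$ finely enough that each $\gamma_i$ acts on every piece by prefix replacement. Then this $3$-cycle factors as a product of commuting cone $3$-cycles $(x_s \ y_s \ z_s)$ in permutation notation, where $x_s, y_s, z_s$ are pairwise incomparable words. Refining further by common suffixes, we may assume that every $x_s, y_s, z_s$ extends some word of $A$. The cones $x_s\C$, $y_s\C$, $z_s\C$ are pairwise disjoint, so all these words lie in one basis $D$ that refines $A$. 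A basis refining $A$ is an iterated simple expansion of $A$, so each cone $3$-cycle lies in $\Alt(D) \leq H$. Since $V$ is a simple vigorous group, Lemma~\ref{lem:three-cycles} gives $V = \< X \> \leq H$.

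For the first stage, I would induct on the number of expansions. The basic tool is conjugation: by Lemma~\ref{lem:conjugation}, for $\gamma \in H$ we get $\Alt(E)^\gamma = \Alt(E\gamma) \leq H$ whenever $\gamma$ maps the cones of $E$ to cones by prefix replacement. First I would move the split point. Since $n \geq 3$, for any $i$ there is an even permutation $h \in \Alt(A)$ with $u_n \mapsto u_i$. Conjugating $\Alt(B)$ by $h$ gives $\Alt$ of the basis $A$ split at $u_i$. Next I would go one level deeper. Take $h \in \Alt(A)$ with $u_n \mapsto u_1$ and $g \in \Alt(B)$ with $u_1 \mapsto u_n0$; then $hg \in H$ maps $u_n\C$ onto $u_n0\C$ by prefix replacement, and conjugating $\Alt(B)$ or $\Alt(A)$ by such products yields $\Alt$ of bases containing $u_n00, u_n01$. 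In general, given $\Alt(D)$ and $\Alt(D')$ in $H$ with $D'$ a simple expansion of $D$ at a leaf $d$, I want to show that $\Alt(D'')$ lies in $H$ for every simple expansion $D''$ of $D'$. The idea is to find an element of $\< \Alt(D), \Alt(D') \>$ that carries the pair $(D, D')$ onto a pair $(E, E')$ with $E' = D''$ (or carries $d\C$ onto the leaf being split), built as a product of two permutations exactly as in the case $hg$ above. An alternative is to show that $\< \Alt(E), \Alt(E') \> \leq H$ for a pair $(E, E')$ of the same shape as $(A, B)$ with $E$ of size $|D'|$.

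The main obstacle is this induction step: the sizes of $D$ and $D'$ differ by one, so no single element of $V$ maps one basis onto the other, and the bookkeeping of which bases arise as images under products of permutations needs care. If the direct approach stalls, I would first try to establish the stronger inductive statement that $H$ contains $\Alt(D)$ for all bases $D$ with $|D| \geq n$ reachable by such moves. Then I would check that the three-move calculus (relabel within $\Alt(D)$, split via conjugates of $\Alt(B)$, push a leaf one level down via products $hg$ as above) reaches every expansion of $A$.
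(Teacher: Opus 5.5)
There is a genuine gap. Your Stage~2 is sound: after refining a basis of $00\C$ so that $\gamma_1,\gamma_2,\gamma_3$ act by prefix replacement, each element of $X$ is a product of disjoint cone $3$-cycles inside one $\Alt(D)$ with $D$ refining $A$, and Lemma~\ref{lem:three-cycles} finishes. Your first move in Stage~1, conjugating $\Alt(B)$ by $h\in\Alt(A)$, is also fine.

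But Stage~1 is the whole content of the theorem, and it is not proved. You leave the inductive step open, and the one concrete device you offer for going a level deeper fails. Since $g\in\Alt(B)$ moves $u_n0\C$ and $u_n1\C$ independently, $u_n\C g$ is in general not a cone. So $hg$ does not map the cone $(u_nh^{-1})\C$, which belongs to both $A$ and $B$, onto a cone, and Lemma~\ref{lem:conjugation} then says nothing about $\Alt(A)^{hg}$ or $\Alt(B)^{hg}$ being $\Alt$ of a basis.

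For example, let $A=\{0,10,11\}$ with $u_3=11$, so $B=\{0,10,110,111\}$. Whichever of $0,10$ is $u_1$, any $g\in\Alt(B)$ with $u_1g=110$ sends $110$ and $111$ to two words in $\{0,10,111\}$. No two of these are siblings, so $11\C g=(u_3h^{-1})\C hg$ is not a cone. Indeed $\Alt(B)^{hg}$, a group of order $12$, is not $\Alt$ of any basis containing $1100$ and $1101$, since no four-element basis contains both. So even the second level of your ``three-move calculus'' is not established, let alone that it reaches every refinement of $A$.

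The missing idea is to conjugate single elements whose supports avoid the cones that get split. You use this to build an element of $H$ that pushes a cone one level down while fixing a reference cone. This is what the paper does for $n=3$, writing $A=\{u,v,w\}$ and $B=\{u,v,w0,w1\}$ with $|u|\ge 2$:
\begin{enumerate}
\item Conjugating the $3$-cycle $(v\ w0\ w1)\in\Alt(B)$ by $(u\ w\ v)\in\Alt(A)$ gives $(u\ v0\ v1)$. Hence $\Alt(\{u,v0,v1,w0,w1\})\le H$, and in particular $(v\ w)=(v0\ w0)(v1\ w1)\in H$.
\item The elements $(v\ w)(v\ w0\ w1)$ and $(v\ w)(v\ w1\ w0)$ fix $u\C$ and carry $wz\C$ onto $w0z\C$ and $w1z\C$.
\item Induction on $|y|$ therefore puts every $(u\ wy)$ into $H$. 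Conjugating by $(v\ w)$ then gives every $(u\ x)$ with $u\perp x$ and $|x|\ge|u|$, and these generate $V$ by \cite[Lemma~3.4(i)]{DonovenHarper20}.
\end{enumerate}
The case $n>3$ reduces to $n=3$ on the sets $u_i\C\sqcup u_{n-1}\C\sqcup u_n\C$ via Lemma~\ref{lem:union}. With a leaf-pushing element of this kind your Stage~1 could probably be completed, but as written the induction has no engine.
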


\begin{proof}
First assume that $n = 3$ and, for exposition, write $A = \{ u, v, w \}$ and $B = \{ u, v, w0, w1 \}$. 
Since it is impossible to have $|u| = |v| = 1$, without loss of generality, assume that $|u| \geq 2$.
Write $H = \< \Alt(A), \Alt(B) \>$.
Observe that $(v \ w0 \ w1)^{(u \ w \ v)} = (u \ v0 \ v1)$, so 
\[
\Alt(\{ u, v0, v1, w0, w1 \}) = \< (u \ v0 \ v1), (u \ w0 \ w1) \> \leq H.
\] 
In particular, $(v \ w) = (v0 \ w0)(v1 \ w1) \in H$.

Let $x \in \{ 0, 1 \}^*$ such that $u \perp x$ and $|x| \geq |u|$.
By \cite[Lemma~3.4(i)]{DonovenHarper20}, it suffices to prove that $(u \ x) \in H$.
Since $\{ u, v, w \}$ is a basis, there exists $y \in \{ 0, 1 \}^*$ such that $x = vy$ or $x = wy$.
If $(u \ wy) \in H$, then 
\[
(u \ vy) = (u \ wy)^{(v \ w)} \in H,
\]
so without loss of generality, assume that $x = wy$.
We now proceed by induction on $|y|$.
If $|y| = 0$, then $x = w$ and certainly $(u \ w) \in H$.
Now assume that $|y| > 0$ and write $y = az$ where $a \in \{0, 1\}$ and $z \in \{0, 1\}^*$.
By induction, $(u \ wz) \in H$, so 
\begin{gather*}
(u \ w0z) = (u \ wz)^{(v \ w)(v \ w0 \ w1)} \in H \\
(u \ w1z) = (u \ wz)^{(v \ w)(v \ w1 \ w0)} \in H,
\end{gather*}
which proves that $(u \ x) = (u \ wy) = (u \ waz) \in H$, completing the induction.

Now assume that $n > 3$.
For each $1 \leq i \leq n - 2$, let $X_i = u_i\C \sqcup u_{n-1}\C \sqcup u_n\C$, and write $A_i = \{ u_i, u_{n-1}, u_n \}$ and $B_i = \{ u_i, u_{n-1}, u_n0, u_n1 \}$.
Since we have proved the theorem when $n = 3$, we know that $\< \Alt(A_i), \Alt(B_i) \> = V_{X_i}$ for all $1 \leq i \leq n-2$.
Therefore, 
\[
\< \Alt(A), \Alt(B) \> \geq \< \Alt(A_1), \Alt(B_1), \dots, \Alt(A_{n-2}), \Alt(B_{n-2}) \> \geq \< V_{X_1}, \dots, V_{X_{n-2}} \>.
\]
However, $\C = \bigcup_{i=1}^{n-2} X_i$, and for $1 \leq i, j \leq n - 2$, the intersection $X_i \cap X_j$ is nonempty as $u_n\C \subseteq X_i \cap X_j$.
Therefore, repeatedly applying Lemma~\ref{lem:union} yields $\< V_{X_1}, \dots, V_{X_{n-2}} \> = V$, which completes the proof.
\end{proof}

We use Theorem~\ref{thm:criterion} to verify the generating sets given in Examples~\ref{ex:involutions} and~\ref{ex:minimal}.

\begin{proof}[Proof of Example~\ref{ex:involutions}]
Define the elements
\[
\alpha = (000 \ 001)(010 \ 011)(10 \ 11) \quad \beta = (001 \ 010)(10 \ 11) \quad \gamma = (000 \ 10)(001 \ 110)(010 \ 111).
\]
Fix the two bases
\[
A = \{ 000, 001, 010, 011, 10, 11 \} \quad B = \{ 000, 001, 010, 011, 10, 110, 111 \}.
\] 
Since $\alpha\beta = (000 \ 010 \ 011 \ 001)$ and $\gamma = (000 \ 10)(001 \ 110)(010 \ 111)$, we see that 
\[
\Alt(B) \leq \Sym(B) = \< (000 \ 010 \ 011 \ 001), (000 \ 10)(001 \ 110)(010 \ 111) \> \leq \< \alpha, \beta, \gamma \>.
\]
Thus, $(001 \ 010), (000 \ 001 \ 010 \ 011 \ 10) \in \< \alpha\beta, \gamma \>$, so $(10 \ 11) = \beta (001 \ 010) \in \< \alpha, \beta, \gamma \>$.
Hence,
\[
\Alt(A) \leq \Sym(A) = \< (000 \ 001 \ 010 \ 011 \ 10), (10 \ 11) \> \leq \< \alpha, \beta, \gamma \>.
\]
By Theorem~\ref{thm:criterion}, we conclude that $V =  \< \alpha, \beta, \gamma \>$.
\end{proof}

\begin{proof}[Proof of Example~\ref{ex:minimal}]
Let $n \geq 2$, let $x_1, \dots, x_N$ be the words of length $n$ over $\{ 0, 1 \}$, and let
\[
X = \{ (x_1 \ x_2), \dots, (x_1 \ x_N), (x_1 \ x_N0 \ x_N1) \}.
\]
Note that $X \setminus \{ (x_1 \ x_N0 \ x_N1) \}$ generates the symmetric group on $A = \{ x_1, \dots, x_N \}$ and $X \setminus \{ (x_1 \ x_N) \}$ generates the symmetric group on $B = \{ x_1, \dots, x_{N-1}, x_N0, x_N1 \}$, so neither of these sets generate $V$, but Theorem~\ref{thm:criterion} implies that $X$ generates $V$.
We complete the proof by observing that for $2 \leq i \leq N-1$, every element of $X \setminus \{ (x_1 \ x_i) \}$ fixes $x_i\C$ pointwise, so $X \setminus \{ (x_1 \ x_i) \}$ does not generate $V$ either.
\end{proof}

\section{Torsion generating sets} 
\label{s:torsion}

\subsection{Main construction}
\label{ss:torsion_construction}

We now begin proving our main theorems.
In this section, we present a general construction, which we then we apply in the following section.

\begin{definition}
\label{def:subsets}
For $G \leq \Homeo(\C)$ and positive integers $m$ and $r$, we define sets
\[
\S_{m, r}(G) \subseteq \S_m(G) \subseteq \S(G) \subseteq G \times G
\]
as follows
\begin{align*}
\S(G)        &= \{ (\sigma, \tau) \in G \times G       \mid \text{$\sigma\tau$ has small support} \} \\
\S_m(G)      &= \{ (\sigma, \tau) \in \mathcal{S}(G)   \mid |\sigma\tau| = m \} \\
\S_{m, r}(G) &= \{ (\sigma, \tau) \in \mathcal{S}_m(G) \mid \text{$G = \< \sigma, \tau \>$ and $[\sigma^r, \tau^r] \neq 1$} \}.
\end{align*}
\end{definition}

Proposition~\ref{prop:torsion} guarantees that $\S_{m, r}(G)$ is nonempty whenever $G$ is a finitely generated simple vigorous group and $m, r \geq 2$.

\begin{figure}[p]
\centering
\begin{tikzpicture}[scale=1.1, circle, minimum size=1cm]
  \foreach \x in {0,...,2}
  {
    \ifthenelse{\x = 0}
    {
      \node[draw,ellipse, minimum height=3.2cm, minimum width=2.8cm] (00)  at ($(90 : 1) + (0,0.5) + (0,0.5)$) {\raisebox{-1.6cm}{$C$}};
      \node[draw] (001) at ($(90 : 1.25) + (0,0.5) + (0,1.1)$) {001};
      \node[draw] (010) at ($(90 : 1.25) + (0,0.5) + (-0.65,0.25)$) {010};
      \node[draw] (011) at ($(90 : 1.25) + (0,0.5) + (0.65,0.25)$)  {011};
      \node[draw] (E)   at ($(90 : 2) + (0,0.5) + (0,1.75)$)  {$E$};
    }
    {
      \node[draw] (0\x) at (90 - \x * 120 : 1) {0\x};
      \foreach \y in {0,...,2}
      {
        \node[draw] (\x \y) at ($(90 - \x * 120 : {2 + 2 * sin(180/3)}) + (270 - \x * 120 - \y * 120 : 1)$) {\x \y};
      }
    }
  }
  \foreach \x in {0,...,2}
  {
    \pgfmathtruncatemacro{\xx}{mod(\x+1,3)}
    \draw[-{To[scale=2]}] (0\x) -- (0\xx);
    \ifthenelse{\x = 0}
    {
      \draw[ultra thick] (E)   edge (001);
      \draw[ultra thick] (010) edge (011); 
    }
    {
      \draw[ultra thick] (0\x) -- (\x0);
      \foreach \y in {0,...,2}
      {
        \ifthenelse{\y = 1}{
          \draw[ultra thick, bend left] (\x1) edge (\x2);
        }{}
        \pgfmathtruncatemacro{\yy}{mod(\y+1,3)}
        \draw[-{To[scale=2]}] (\x\y) edge (\x\yy);
      }
    }
  }
\end{tikzpicture}
\caption{The elements $\alpha$ (bold) and $\beta$ (thin) in Definition~\ref{def:technical} with $(m, n) = (2, 3)$.}
\label{fig:torsion}
\end{figure}

\begin{figure}[p]
\centering
\begin{tikzpicture}[scale=1.45, circle, minimum size=1cm]
  \foreach \x in {0,...,4}
  {
    \ifthenelse{\x = 0}
    {
      \node[draw,ellipse, minimum height=4.2cm, minimum width=3.5cm] (00)  at ($(90 : 1) + (0,1.5)$) {\raisebox{0.75cm}{$C$}};
      \node[draw] (E)   at ($(90 : 2)    + (0,0.25) + (0,2.25)$)     {$E$};
      \node[draw] (001) at ($(90 : 1.25) + (0,0.25) + (-0.6,1.65)$) {001};
      \node[draw] (002) at ($(90 : 1.25) + (0,0.25) + ( 0.6,1.65)$)  {002};
      \node[draw] (010) at ($(90 : 1.25) + (0,0.25) + (-0.6,0.9)$)  {010};
      \node[draw] (011) at ($(90 : 1.25) + (0,0.25) + ( 0.6,0.9)$)  {011};
      \node[draw] (012) at ($(90 : 1.25) + (0,0.25)$)                {012};
    }
    {
    \node[draw] (0\x)  at (90 - \x * 72 : 1)   {0\x};
    \ifthenelse{\x = 1 \OR \x = 2}    
    {
      \node[draw] (\x02) at ($(90 - \x * 72 : 1.5) + (- \x * 72 : 0.866)$) {\x02};
      \node[draw] (\x12) at ($(90 - \x * 72 : {2 + 2 * sin(180/5)}) + (270 - \x * 72 - 1.5 * 72 : 1.732)$) {\x12};
      \foreach \y in {0,...,4}
      {
        \node[draw] (\x \y) at ($(90 - \x * 72 : {2 + 2 * sin(180/5)}) + (270 - \x * 72 - \y * 72 : 1)$) {\x \y};
      }
    }
    {
    }
    }
  }
  \foreach \x in {0,...,4}
  {
    \pgfmathtruncatemacro{\xx}{mod(\x+1,5)}
    \draw[-{To[scale=2]}] (0\x) -- (0\xx);
    \ifthenelse{\x = 0}
    {
      \draw[-{To}, ultra thick] (E)   -- (001);
      \draw[-{To}, ultra thick] (001) -- (002);
      \draw[-{To}, ultra thick] (002) -- (E);      
      \draw[-{To}, ultra thick] (010) -- (011);
      \draw[-{To}, ultra thick] (011) -- (012);
      \draw[-{To}, ultra thick] (012) -- (010);   
    }
    {
    \ifthenelse{\x = 1 \OR \x = 2}
    {
      \draw[-{To},ultra thick] (0\x) -- (\x0); 
      \draw[-{To},ultra thick] (\x0) -- (\x02);
      \draw[-{To},ultra thick] (\x02) -- (0\x);
      \foreach \y in {0,...,4}
      {
        \ifthenelse{\y = 1}{
          \draw[-{To}, ultra thick, bend left] (\x1) edge (\x2);
          \draw[-{To}, ultra thick] (\x2) -- (\x12);
          \draw[-{To}, ultra thick] (\x12) -- (\x1);
        }{}
        \pgfmathtruncatemacro{\yy}{mod(\y+1,5)}
        \draw[-{To[scale=2]}] (\x\y) edge (\x\yy);
      }
    }
    {
    }
    }
  }
\end{tikzpicture}
\caption{The elements $\alpha$ (bold) and $\beta$ (thin) in Definition~\ref{def:technical} with $(m, n) = (3, 5)$.}
\label{fig:torsion_35}
\end{figure}

\begin{definition}
\label{def:technical}
Let $m \geq 2$ and $n \geq 3$ be integers.
Let $G \leq \Homeo(\C)$ be a simple vigorous group.
Let $C$ be a clopen set such that $\emptyset \subsetneq C \subsetneq \C$ and $[C]_G$ is even.
Let $(\sigma, \tau) \in \S(G_C)$.
We define $\Phi_{m, n}(G, C, \sigma, \tau)$ to be a pair $(\alpha, \beta) \in G \times G$ constructed as follows. 
(There are choices involved in the following construction and these can be made arbitrarily.)

For brevity, write $m' = m - 1$ and $n' = n - 1$.
Define 
\[
X = (\{ 10, 11, 20, 21 \} \times \{ 2, \dots, m' \}) \sqcup (\{ 0, 1, 2 \} \times \{ 0, \dots, n' \}).
\]
Let $\gamma_{00}$ be the identity of $G$, and for $x \in X \setminus \{00\}$, fix $\gamma_x \in G$ such that $\C = \bigsqcup_{x \in X} C\gamma_x \sqcup E$ for a nonempty clopen set $E$.
Define
\[
Y = (\{ 00, 01 \} \times \{ 0, \dots, m' \}) \setminus \{ 000 \}.
\] 
For $y \in Y$, fix $\gamma_y \in G$ such that $\bigsqcup_{y \in Y} E\gamma_y \subseteq C \setminus \Supp(\sigma\tau)$.

We now repeatedly apply Lemma~\ref{lem:existence}.
Let 
\[
\alpha = \tau^{-\gamma_{11}}\tau^{\gamma_{12}}\sigma^{-\gamma_{21}}\sigma^{\gamma_{22}}\pi_1\pi_2\pi_3
\]
where $\pi_1, \pi_2, \pi_3 \in G$, respectively, enact the permutations 
\begin{gather*}
(C\gamma_{01} \ C\gamma_{10} \ C\gamma_{102} \ \cdots \ C\gamma_{10m'})(C\gamma_{02} \ C\gamma_{20} \ C\gamma_{202} \ \cdots \ C\gamma_{20m'}) \\
(C\gamma_{12} \ C\gamma_{11} \ C\gamma_{112} \ \cdots \ C\gamma_{11m'})(C\gamma_{22} \ C\gamma_{21} \ C\gamma_{212} \ \cdots \ C\gamma_{21m'}) \\
(E \ E\gamma_{001} \ \cdots \ E\gamma_{00m'})(E\gamma_{010} \ E\gamma_{011} \ \cdots \ E\gamma_{01m'}).
\end{gather*}
Let $\beta \in G$ enact the permutation
\[
(C\gamma_{00} \ \cdots \ C\gamma_{0n'})(C\gamma_{10} \ \cdots \ C\gamma_{1n'})(C\gamma_{20} \ \cdots \ C\gamma_{2n'}).
\]
\end{definition}

Figures~\ref{fig:torsion} and~\ref{fig:torsion_35} illustrate Definition~\ref{def:technical} when $(m,n)$ is $(2, 3)$ and $(3, 5)$. In these figures, a region labelled by $u \in \mathbb{N}^*$ refers to the clopen set $C\gamma_u$ or $E\gamma_u$ as appropriate.

We now establish various properties of the elements constructed in Definition~\ref{def:technical}. 
Recall that a generating set $X$ of a group $G$ is \emph{minimal} if no proper subset of $X$ generates $G$.
For the arguments in the following section a stronger condition will be convenient.
A generating set $X$ of a vigorous group $G \leq \Homeo(\C)$ is \emph{strongly minimal} if every proper subset of $X$ fixes a nonempty clopen subset of $\C$ pointwise.
Since a vigorous group $G \leq \Homeo(\C)$ fixes no nonempty clopen subset of $\C$ pointwise, a strongly minimal generating set of $G$ is  minimal.

It will be convenient to define $r \: \Z \times \Z \to \Z$ as 
\[
(m, n) \mapsto  m(4m + 3n - 10)((m - 1)(4m + 3n - 10)+1).
\]

\begin{proposition}
\label{prop:technical}
Let $m \geq 2$ and $n \geq 3$ be integers and write $r = r(m, n)$.
Let $G \leq \Homeo(\C)$ be a simple vigorous group.
Let $C$ be a clopen set such that $\emptyset \subsetneq C \subsetneq \C$ and $[C]_G$ is even.
Let $(\sigma, \tau) \in \S(G_C)$ and let $(\alpha, \beta) = \Phi_{m, n}(G, C, \sigma, \tau)$.
Then the following hold
\begin{enumerate}
\item $\alpha$ has small support and order $m$
\item $\beta$ has small support and order $n$
\item $(\alpha\beta)^{\beta^{-2}\alpha^{-1}\beta^{-2}}$ setwise stabilises $C$ and $(\alpha\beta)^{\beta^{-2}\alpha^{-1}\beta^{-2}}|_C = \sigma|_C$
\item $(\alpha\beta)^{\beta^{-2}\alpha^{-1}\beta^{-1}}$ setwise stabilises $C$ and $(\alpha\beta)^{\beta^{-2}\alpha^{-1}\beta^{-1}}|_C = \tau|_C$ 
\item if $(\sigma, \tau) \in \S_m(G_C)$, then $[((\alpha\beta)^{\beta^{-2}\alpha^{-1}\beta^{-2}})^r, ((\alpha\beta)^{\beta^{-2}\alpha^{-1}\beta^{-1}})^r] = [\sigma^r,\tau^r]$
\item if $(\sigma, \tau) \in \S_{m, r}(G_C)$, then $\{\alpha, \beta\}$ is a strongly minimal generating set of $G$ of size two
\item if $(\sigma, \tau) \in \S_{m, r}(G_C)$, then there are involutions $\gamma, \delta \in G$ satisfying $\beta = \gamma\delta$ such that $\{\alpha, \gamma, \delta \}$ is a strongly minimal generating set of $G$ of size three.
\end{enumerate}
\end{proposition}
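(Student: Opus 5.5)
Parts (i)--(v) are bookkeeping on supports; (vi) and (vii) are the real content. I would first fix notation: $\alpha$ and $\beta$ are products of elements with explicitly described supports, all inside $\bigsqcup_{x\in X} C\gamma_x \sqcup E \sqcup \bigsqcup_{y} E\gamma_y$.

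For (i), the factors $\tau^{-\gamma_{11}}$, $\tau^{\gamma_{12}}$, $\sigma^{-\gamma_{21}}$, $\sigma^{\gamma_{22}}$ and the relevant cycles of $\pi_1,\pi_2,\pi_3$ do not all have disjoint supports. The cycles of $\pi_2$ move $C\gamma_{11}$ and $C\gamma_{12}$. So I would compute the restriction of $\alpha$ to each orbit block directly.
\begin{itemize}
\item On the block $C\gamma_{12}\to C\gamma_{11}\to C\gamma_{112}\to\cdots$, the composite first return map on $C\gamma_{12}$ is a conjugate of $\tau^{-1}\tau=1$, up to the bookkeeping of how the $\tau$-twists are inserted. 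Hence this block behaves like an honest $m$-cycle of sets.
\item The same holds for the $\sigma$ block.
\item The cycles of $\pi_1$ and $\pi_3$ are genuine $m$-cycles on disjoint sets.
\end{itemize}
So $|\alpha|=m$. Small support follows because $\Supp(\alpha)$ misses $C\gamma_{0k}$ for $k\ge 3$ (when $n\ge 4$) or part of $C$. If not, it misses $C\setminus(\Supp(\sigma\tau)\cup\bigsqcup_y E\gamma_y)$, which must be nonempty by the choice of the $\gamma_y$; I would check this carefully. Part (ii) is immediate from Lemma~\ref{lem:existence}, since $\beta$ is a product of three $n$-cycles on disjoint sets, and its support misses $E$.

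For (iii) and (iv), I would trace the image of $C$ under the conjugating words, using Lemma~\ref{lem:conjugation}. Conjugating $\alpha\beta$ by $\beta^{-2}\alpha^{-1}\beta^{-2}$ amounts to evaluating $\alpha\beta$ on $C\beta^{-2}\alpha^{-1}\beta^{-2}$-type translates. Following $C=C\gamma_{00}$ under $\beta^{2}$, then $\alpha$, then $\beta^2$ (inverse order for the conjugator) lands on $C\gamma_{22}$ or $C\gamma_{21}$, where the $\sigma$-twists sit. On that piece, $\alpha\beta$ acts by $\sigma$ transported along, and then returns to the same translate. Thus the conjugate stabilises $C$ and restricts to $\sigma|_C$, and with one fewer $\beta$ we get $C\gamma_{1\ast}$ and $\tau$. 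The $E\gamma_y$ pieces inside $C$ are designed to absorb the part of $\alpha\beta$ that would otherwise leak; I must check that on $C\setminus\Supp(\sigma\tau)$ these pieces still match $\sigma|_C$. This is the fiddliest verification.

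For (v), write $a,b$ for the two conjugates. They agree with $\sigma,\tau$ on $C$, and outside $C$ they act on a bounded collection of translates. The hypothesis $|\sigma\tau|=m$ makes the outside parts have orders dividing a number that divides $r$, via the factor $m(4m+3n-10)\cdots$, which encodes cycle lengths of the relevant finite permutations of pieces. Therefore $a^r$ and $b^r$ are supported in $C$, equal to $\sigma^r$ and $\tau^r$ there, and the commutator identity follows. Identifying exactly why this $r$ works is the main obstacle. I expect it to come from counting the lengths of the cycles of $\alpha\beta$ on the set of pieces: there are $4m+3n-10$ pieces, and the iterated structure gives the second factor.

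For (vi), let $H=\langle\alpha,\beta\rangle$. By (iii) and (iv), $H$ contains $a,b$; by (v), $H$ contains $[\sigma^r,\tau^r]\ne1$, which has support in $C$. I would then argue that $H\cap G_C$ is normalised appropriately and conjugate things so that $H\ge G_C$. One route is to show that conjugating $a,b$ by suitable powers yields elements equal to $\sigma,\tau$ on $C$ and disjoint pieces elsewhere, whose commutators with $G_C$-elements produce $\langle\sigma,\tau\rangle=G_C$. Then $\beta$-translates of $C$ cover enough to apply Lemma~\ref{lem:union} repeatedly along a connected overlap graph, giving $H=G$.

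Strong minimality is read off from supports: $\langle\alpha\rangle$ and $\langle\beta\rangle$ each fix a nonempty clopen set by small support. For (vii), $\beta$ is a product of $n$-cycles of sets, and each $n$-cycle is a product of two involutions permuting pieces, namely reflections of the $n$-gon. These are realisable in $G$ by Lemma~\ref{lem:existence} because $[C]_G$ is even. Then $\{\alpha,\gamma,\delta\}$ generates, and each two-element subset fixes a clopen set pointwise. For example, $\langle\gamma,\delta\rangle$ fixes $E$; $\langle\alpha,\gamma\rangle$ and $\langle\alpha,\delta\rangle$ fix a suitably chosen piece $C\gamma_{0k}$, which I would arrange by choosing the reflections to fix different pieces.
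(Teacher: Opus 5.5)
Your treatment of (i)--(iv) is essentially right, but there are genuine gaps in (v)--(vii). The most serious is the last step of (vi). The $\beta$-translates of $C$ are the pieces $C\gamma_{00},\dots,C\gamma_{0n'}$, and in fact all the pieces $C\gamma_x$ are pairwise disjoint and tile $\C\setminus E$. So an overlap graph built from translates of $C$ has no edges and never reaches $E$, and Lemma~\ref{lem:union} cannot be applied.

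The missing idea is the role of $E$ and of the pieces $E\gamma_y\subseteq C$. Contrary to your remark, they play no part in (iii) and (iv): there $\beta^2\alpha\beta^2$, resp.\ $\beta\alpha\beta^2$, carries all of $C$ onto $C\gamma_{22}$, resp.\ $C\gamma_{12}$, where $\alpha\beta$ is exactly $\sigma^{\gamma_{22}}$, resp.\ $\tau^{\gamma_{12}}$. On $E\sqcup\bigsqcup_y E\gamma_y$ the element $\alpha$ acts as $\pi_3$, so it maps $E\sqcup E\gamma_{010}$ onto $E\gamma_{001}\sqcup E\gamma_{011}\subseteq C$. Hence once $G_C\leq\langle\alpha,\beta\rangle$, you get $G_{E\sqcup E\gamma_{010}}=(G_{E\gamma_{001}\sqcup E\gamma_{011}})^{\alpha^{-1}}\leq\langle\alpha,\beta\rangle$. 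This region meets $C$, so Lemma~\ref{lem:union} gives $G_{C\sqcup E}\leq\langle\alpha,\beta\rangle$. Translates of $C\sqcup E$ by words in $\alpha,\beta$ reaching every piece then overlap along translates of $E$, because $\beta$ fixes $E$ and $E\alpha^j\subseteq C\sqcup E$.

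The step $G_C\leq\langle\alpha,\beta\rangle$ also needs to be stated properly; your ``commutators with $G_C$-elements'' route is circular. By (iii), (iv) and $G_C=\langle\sigma,\tau\rangle$, the setwise stabiliser of $C$ in $\langle\alpha,\beta\rangle$ induces all of $(G_C)|_C$ on $C$. So the $\langle\alpha,\beta\rangle$-conjugates of $[\sigma^r,\tau^r]$ include all of its $G_C$-conjugates, and these generate $G_C$ because $G_C$ is simple.

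In (v), your claim that $a^r$ and $b^r$ are supported in $C$ is false. The hypothesis $|\sigma\tau|=m$ and the choice of $r$ only kill $\alpha\beta$ on the orbit of $C$. The $s=4m+3n-10$ pieces other than $C\gamma_{12},C\gamma_{22}$ form a single $\alpha\beta$-cycle with first-return map $(\sigma\tau)^{-1}$ on $D=C\setminus\bigsqcup_y E\gamma_y$, and the $E$-pieces lie in cycles of lengths $ms$ and $(m-1)s+1$. This gives $(\alpha\beta)^r=(\tau^{\gamma_{12}}\sigma^{\gamma_{22}})^r$. The two $\alpha\beta$-invariant pieces carry $\tau$ and $\sigma$, which may have infinite order, so $a^r=(\tau^r)^{\gamma_{0n'}}\sigma^r$ and $b^r=\tau^r(\sigma^r)^{\gamma_{01}}$. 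The commutator identity holds because $C\gamma_{0n'}$, $C$ and $C\gamma_{01}$ are pairwise disjoint, which is where $n\geq 3$ is used.

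In (vii), reflections permuting whole pieces fail for $n=3$. Then every piece other than $C$, and $E$ as well, is moved by $\pi_1$, $\pi_2$ or $\pi_3$, so $\alpha$ fixes pointwise only subsets of $D\subseteq C$. Since $\gamma\delta=\beta$ moves $C$, one of $\gamma,\delta$ maps the piece $C$ off itself, and so has no common fixed clopen set with $\alpha$. The paper handles this by splitting $C=C_1\sqcup C_2$ with $[C_1]_G$ even and $C_1\cap D$, $C_2\cap D$ both nonempty. It then uses reflections with different centres on the $C_1$- and $C_2$-translates in the $0$-row, so that $\gamma$ fixes $C_2$ and $\delta$ fixes $C_1$. (For $n\geq 4$ your idea can be rescued by choosing the reflection axes row by row.)
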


\begin{proof}
We adopt the notation of Definition~\ref{def:technical}.
Inspecting the construction in Definition~\ref{def:technical}, it is easy to see that~(i) and~(ii) hold.
We also see that $(\alpha\beta)|_{C\gamma_{12}} = \tau^{\gamma_{12}}|_{C\gamma_{12}}$ and $(\alpha\beta)|_{C\gamma_{22}} = \sigma^{\gamma_{22}}|_{C\gamma_{22}}$, and, by consulting Figure~\ref{fig:torsion}, that
\begin{equation}
\gamma_{12} \beta^{-2}\alpha^{-1}\beta^{-1}|_C = \gamma_{22} \beta^{-2}\alpha^{-1}\beta^{-2}|_C = \gamma_{00}|_C = 1_C.
\label{eq:move}
\end{equation}
This establishes~(iii) and~(iv).
In particular, if $H$ is the setwise stabiliser of $C$ in $\<\alpha, \beta \>$, then
\begin{equation}
H|_C \geq (G_C)|_C.
\label{eq:induce}
\end{equation}

For the rest of the proof, assume that $(\sigma, \tau) \in \S_m(G_C)$. 
We claim that 
\begin{equation}
(\alpha\beta)^r = (\tau^{\gamma_{12}}\sigma^{\gamma_{22}})^r.
\label{eq:power}
\end{equation}
To prove this, write $s = 4m + 3n - 10$, noting that $r = ms((m-1)s+1)$, and write
\[
D = C \setminus \bigsqcup_{y \in Y} E\gamma_y.
\]
Then, restricted to $D$, we see $(\alpha\beta)^s = (\tau^{-1}\sigma^{-1})$ (again, the reader might find it helpful to consult Figure~\ref{fig:torsion}, recalling that $\gamma_{00} = 1$). 
Since $|\tau^{-1}\sigma^{-1}| = |\sigma\tau| = m$, we deduce that, restricted to $D$, the element $(\alpha\beta)^{sm}$ is trivial.
Since
\[
\left(\bigsqcup_{i=1}^{m'} E\gamma_{00i} \sqcup \bigsqcup_{i=0}^{m'} E\gamma_{01i}\right) \subseteq \Fix_{C}(\sigma\tau),
\] 
restricted to $\bigsqcup_{i=0}^{m'} E\gamma_{01i}$, the element $(\alpha\beta)^s$ enacts $(E\gamma_{010} \, \dots \, E\gamma_{01m'})$, so $(\alpha\beta)^{sm}$ is trivial, and restricted to $\bigsqcup_{i=1}^{m'} E\gamma_{00i}$, the element $(\alpha\beta)^{(m-1)s+1}$ is trivial. 
Therefore, $(\alpha\beta)^r \in G_{(C)}$. 
Now, it is easy to check that $\C \setminus \left(C\gamma_{12} \sqcup C\gamma_{22}\right) \subseteq C\<\alpha\beta\>$.
Therefore, $(\alpha\beta)^r \in G_{C\gamma_{12} \sqcup C\gamma_{22}}$, and we conclude that $(\alpha\beta)^r = (\tau^{\gamma_{12}} \sigma^{\gamma_{22}})^r$, as claimed.

Applying~\eqref{eq:move} and~\eqref{eq:power}, we see that
\begin{align}
[((\alpha\beta)^{\beta^{-2}\alpha^{-1}\beta^{-2}})^r, ((\alpha\beta)^{\beta^{-2}\alpha^{-1}\beta^{-1}})^r]
     &= [((\tau^{\gamma_{12}} \sigma^{\gamma_{22}})^{\beta^{-2}\alpha^{-1}\beta^{-2}})^r, ((\tau^{\gamma_{12}} \sigma^{\gamma_{22}})^{\beta^{-2}\alpha^{-1}\beta^{-1}})^r] \nonumber \\
     &= [( \tau^{\gamma_{0n'}})^r(\sigma^{\gamma_{00}})^r, (\tau^{\gamma_{00}})^r(\sigma^{\gamma_{01}})^r] \nonumber \\
     &= [\sigma^r,\tau^r]^{\gamma_{00}} \nonumber \\
     &= [\sigma^r,\tau^r], \label{eq:commutator}
\end{align}
which proves~(v).

For the rest of the proof, assume further that $(\sigma, \tau) \in \S_{m, r}(G_C)$.
We claim that $G_C \leq \< \alpha, \beta \>$.
Since $G_C = \< \sigma, \tau \>$, by~\eqref{eq:induce},
\[
S = \{ ([\sigma^r,\tau^r])^\theta \mid \theta \in \< \alpha, \beta \> \} = \{ ([\sigma^r,\tau^r])^\theta \mid \theta \in G_C \}.
\] 
Since  $[\sigma^r,\tau^r] \neq 1$, we know that $S$ is a nontrivial normal subset of $G_C$, but $G_C$ is simple, so we deduce that $\< S \> = G_C$. 
By~\eqref{eq:commutator}, $[\sigma^r,\tau^r] \in \< \alpha, \beta \>$, so $S \subseteq \< \alpha, \beta \>$, which means that
\begin{equation}
G_C = \< S \> \leq \< \alpha,\beta \>.
\label{eq:local_1}
\end{equation}

We now claim that $G_{C \sqcup E} \leq \< \alpha, \beta \>$ also.
Since $E\gamma_{001} \sqcup E\gamma_{011} \subseteq C$, we have 
\[
G_{E\gamma_{001} \sqcup E\gamma_{011}} \leq G_C \leq \< \alpha, \beta \>.
\] 
In addition, since $(E\gamma_{001} \sqcup E\gamma_{011})\beta^{-1} = E \sqcup E_{\gamma_{010}}$, we have
\[
G_{E \sqcup E\gamma_{010}} = G_{E\gamma_{001} \sqcup E\gamma{011}}^{\beta^{-1}} \leq \< \alpha, \beta \>.
\] 
Since $C \cap (E \sqcup E_{\gamma_{010}}) \neq \emptyset$, by Lemma~\ref{lem:union}, 
\begin{equation}
G_{C \sqcup E} = \< G_C, G_{E \sqcup E_{\gamma_{010}}} \> = \< G_C, G_{E\gamma_{001} \sqcup E_{\gamma_{011}}}^{\beta^{-1}} \> \leq \< \alpha, \beta \>.
\label{eq:local_2}
\end{equation} 

We are now in a position to prove that $G = \< \alpha, \beta \>$.
Define 
\[
\Gamma = \{ \beta, \beta^2, \beta\alpha\beta, \beta^2\alpha\beta \}A \cup \{ 1, \beta\alpha, \beta^2\alpha \}B
\]
where $A = \{ \alpha^i \mid 0 \leq i \leq m' \}$ and $B = \{ \beta^i \mid 0 \leq i \leq n' \}$.
Then it is easy to check that
\[
\C \setminus E = \bigsqcup_{\gamma \in \Gamma} C\gamma.
\]
In particular,
\begin{equation}
\C = \bigcup_{\gamma \in \Gamma} C\gamma \cup \bigcup_{\gamma \in \Gamma} (C \sqcup E)\gamma.
\label{eq:cover}
\end{equation}
Let $\Delta$ be the graph with vertex set 
\[
\{ C\gamma \mid \gamma \in \Gamma \} \sqcup \{ (C \sqcup E)\gamma \mid \gamma \in \Gamma \} 
\] 
and where $u$ and $v$ are adjacent if and only if $u \cap v \neq \emptyset$. 
We claim that $\Delta$ is connected.
First, $C \subseteq C \sqcup E$, so $C\gamma \subseteq (C \sqcup E)\gamma$ for all $\gamma \in G$.
In particular, $C\gamma$ is adjacent to $(C \sqcup E)\gamma$ for all $\gamma \in \Gamma$.
Next, for all integers $i$ and $j$, note that $E\beta^i\alpha^j \subseteq C \sqcup E$, so $E\beta^i\alpha^j\gamma \subseteq (C \sqcup E)\gamma$ for all $\gamma \in G$.
In particular, $(C \sqcup E)\beta\alpha\beta^i$ and $(C \sqcup E)\beta^2\alpha\beta^i$ are adjacent to $(C \sqcup E)\beta^i$ for all $i$, and $(C \sqcup E)\beta\alpha\beta\alpha^i$ and $(C \sqcup E)\beta^2\alpha\beta\alpha^i$ are adjacent to $(C \sqcup E)\beta\alpha^i$ for all $i$.
In addition, $(C \sqcup E)\beta^i$, $(C \sqcup E)\beta\alpha^i$ and $(C \sqcup E)\beta^2\alpha^i$ are adjacent to $C \sqcup E$ for all $i$.
This proves that $\Delta$ is connected. 
Now, by repeatedly applying Lemma~\ref{lem:union} and using \eqref{eq:cover}, we conclude that 
\[
G = \< G_C, G_{C \sqcup E}, \Gamma \>.
\]
By \eqref{eq:local_1} and \eqref{eq:local_2}, $G_C, G_{C \sqcup E} \leq \< \alpha, \beta\>$ and, by construction, $\Gamma \subseteq \< \alpha, \beta \>$, so we conclude that $G = \< \alpha, \beta \>$, as claimed.
Since $\alpha$ and $\beta$ have small support, $\{ \alpha, \beta \}$ is a strongly minimal generating set of size two, which establishes~(vi).

It remains to prove~(vii).
Every element of a finite symmetric group is a product of two involutions, and since $\beta$ enacts the permutation 
\[
(C\gamma_{00} \ \cdots \ C\gamma_{0n'})(C\gamma_{10} \ \cdots \ C\gamma_{1n'})(C\gamma_{20} \ \cdots \ C\gamma_{2n'})
\]
and $[C]_G$ is even, we quickly deduce that $\beta$ is the product of two involutions in $G$.
However, we will specify a particular decomposition of $\beta$ into two involutions in order to guarantee our desired strong minimality condition. 

Note that $\Fix_C(\beta) = D$. 
Let $C_1$ be a clopen set with $[C_1]_G$ even such that $\emptyset \subsetneq C_1 \subsetneq C$ and $\emptyset \subsetneq C_1 \cap D \subsetneq D$, and write $C_2 = C \setminus C_1$.
In particular, $C\gamma_{00} = C = C_1 \sqcup C_2$, so $C_1 \cap D$ and $C_2 \cap D$ are both nonempty.
Applying Lemma~\ref{lem:existence}, let $\gamma \in G$ enact the permutation
\begin{gather*}
(C_1\gamma_{00} \ C_1\gamma_{0n'})(C_1\gamma_{01} \ C_1\gamma_{0(n'-1)}) \cdots (C_1\gamma_{0(\lfloor n/2 \rfloor - 1)} \ C_1\gamma_{0\lceil n/2 \rceil}) \\
(C_2\gamma_{0n'} \ C_2\gamma_{01})(C_2\gamma_{0(n'-1)} \ C_2\gamma_{02}) \cdots (C_2\gamma_{0(\lfloor n/2 \rfloor + 1)} \ C_2\gamma_{0(\lceil n/2 \rceil - 1)}) \\
(C\gamma_{10} \ C\gamma_{1n'})(C\gamma_{11} \ C\gamma_{1(n'-1)}) \cdots (C\gamma_{1(\lfloor n/2 \rfloor - 1)} \ C\gamma_{1\lceil n/2 \rceil}) \\
(C\gamma_{20} \ C\gamma_{2n'})(C\gamma_{21} \ C\gamma_{2(n'-1)}) \cdots (C\gamma_{2(\lfloor n/2 \rfloor - 1)} \ C\gamma_{2\lceil n/2 \rceil}),
\end{gather*}
and let $\delta \in G$ enact the permutation
\begin{gather*}
(C_1\gamma_{0n'} \ C_1\gamma_{01})(C_1\gamma_{0(n'-1)} \ C_1\gamma_{02}) \cdots (C_1\gamma_{0(\lfloor n/2 \rfloor + 1)} \ C_1\gamma_{0(\lceil n/2 \rceil - 1)}) \\
(C_2\gamma_{00} \ C_2\gamma_{0n'})(C_2\gamma_{01} \ C_2\gamma_{0(n'-1)}) \cdots (C_2\gamma_{0(\lfloor n/2 \rfloor - 1)} \ C_2\gamma_{0\lceil n/2 \rceil}) \\
(C\gamma_{1n'} \ C\gamma_{11})(C\gamma_{1(n'-1)} \ C\gamma_{12}) \cdots (C\gamma_{1(\lfloor n/2 \rfloor + 1)} \ C\gamma_{1(\lceil n/2 \rceil - 1)}) \\
(C\gamma_{2n'} \ C\gamma_{21})(C\gamma_{2(n'-1)} \ C\gamma_{22}) \cdots (C\gamma_{2(\lfloor n/2 \rfloor + 1)} \ C\gamma_{2(\lceil n/2 \rceil - 1)}).
\end{gather*}
Then it is easy to see that $\beta = \gamma\delta$.
In particular, $G = \< \alpha, \beta \> = \< \alpha, \gamma, \delta \>$.
Moreover, $\gamma$ and $\delta$ both fix $E$, and, by construction, $\alpha$ and $\gamma$ both fix $C_2 \cap D$, and $\alpha$ and $\delta$ both fix $C_1 \cap D$.
Therefore, $\{ \alpha, \gamma, \delta \}$ is a strongly minimal generating set of size three, which establishes (vii) and completes the proof.
\end{proof}

\begin{proposition}
\label{prop:injective}
Let $m \geq 2$ and $n \geq 3$ be integers and write $r = r(m, n)$.
Let $G \leq \Homeo(\C)$ be a simple vigorous group.
Let $C$ be a clopen set such that $\emptyset \subsetneq C \subsetneq \C$ and $[C]_G$ is even.
For $i \in \{1, 2\}$, let $(\sigma_i, \tau_i) \in \S_{m, r}(G_C)$ and let $(\alpha_i, \beta_i) = \Phi_{m, n}(G, C, \sigma_i, \tau_i)$.
Then the following hold
\begin{enumerate}
\item if $(\alpha_1, \beta_1) = (\alpha_2, \beta_2)$, then $(\sigma_1, \tau_1) = (\sigma_2, \tau_2)$
\item if $\phi \in \Aut(G)$ satisfies $(\alpha_1, \beta_1) = (\alpha_2\phi, \beta_2\phi)$, then $G_C \phi = G_C$ and $(\sigma_1, \tau_1) = (\sigma_2\phi, \tau_2\phi)$.
\end{enumerate}
\end{proposition}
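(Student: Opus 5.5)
The plan is to recover $(\sigma,\tau)$ from $(\alpha,\beta)$ using the fixed words
\[
w_1(x,y) = (xy)^{y^{-2}x^{-1}y^{-2}}, \qquad w_2(x,y) = (xy)^{y^{-2}x^{-1}y^{-1}},
\]
which appear in Proposition~\ref{prop:technical}(iii) and~(iv).

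For (i), write $w_j^{(i)} = w_j(\alpha_i,\beta_i)$. By Proposition~\ref{prop:technical}(iii) and~(iv), each $w_1^{(i)}$ setwise stabilises $C$ with $w_1^{(i)}|_C = \sigma_i|_C$, and similarly $w_2^{(i)}|_C = \tau_i|_C$. If $(\alpha_1,\beta_1) = (\alpha_2,\beta_2)$, then $w_j^{(1)} = w_j^{(2)}$, so $\sigma_1|_C = \sigma_2|_C$ and $\tau_1|_C = \tau_2|_C$. Since $\sigma_i, \tau_i \in G_C$ are the identity off $C$, they are determined by their restrictions to $C$. Hence $(\sigma_1,\tau_1) = (\sigma_2,\tau_2)$.

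For (ii), I would first make $\phi$ spatial. A vigorous group is locally moving: for every nonempty clopen $U$, the rigid stabiliser $G_U$ is nontrivial. So Rubin's theorem applies and gives $h \in \Homeo(\C)$ with $\gamma\phi = \gamma^h$ for all $\gamma \in G$. In particular $\Supp(\gamma\phi) = \Supp(\gamma)h$, and $w_j(\alpha_2,\beta_2)\phi = w_j(\alpha_1,\beta_1)$.

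It then remains to show $Ch = C$, and for this I would describe $C$ intrinsically from $(\alpha,\beta)$ by words.

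\begin{enumerate}
\item Let $c_i = [(w_1^{(i)})^r, (w_2^{(i)})^r]$. By Proposition~\ref{prop:technical}(v), $c_i = [\sigma_i^r,\tau_i^r]$. This is a nontrivial element of $G_C$, since $(\sigma_i,\tau_i) \in \S_{m,r}(G_C)$.
\item Let $H_i = \< w_1^{(i)}, w_2^{(i)} \>$. Its elements stabilise $C$. Moreover, restriction to $C$ maps $H_i$ onto $(G_C)|_C$, because $\< \sigma_i, \tau_i \> = G_C$.
\item For $\theta \in H_i$, the conjugate $c_i^\theta$ lies in $G_C$, and $c_i^\theta$ depends only on $\theta|_C$.
\item I claim $\bigcup_{\theta \in H_i} \Supp(c_i^\theta) = C$. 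Take a nonempty clopen $U \subseteq \Supp(c_i)$. By vigour of $G_C$ over $C$, for any $x \in C$ some element of $G_C$ maps a small clopen subset of $U$ onto a clopen set containing $x$; the only extra check is when $U$ is all of $C$, which is trivial. So the $G_C$-translates of $\Supp(c_i)$ cover $C$.
\end{enumerate}

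Call this union $C(\alpha_i,\beta_i)$. It is defined purely via words in $\alpha_i, \beta_i$, so it transforms under $h$: $C(\alpha_2\phi,\beta_2\phi) = C(\alpha_2,\beta_2)h$. Since $(\alpha_1,\beta_1) = (\alpha_2\phi,\beta_2\phi)$, this gives
\[
C = C(\alpha_1,\beta_1) = C(\alpha_2,\beta_2)h = Ch,
\]
and therefore $G_C\phi = G_C^h = G_{Ch} = G_C$.

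Finally, $\sigma_2\phi = \sigma_2^h \in G_C$. On $C$ we have
\[
(\sigma_2\phi)|_C = (w_1^{(2)})^h|_C = (w_1^{(2)}\phi)|_C = w_1^{(1)}|_C = \sigma_1|_C,
\]
where the first equality uses $Ch = C$ and $w_1^{(2)}|_C = \sigma_2|_C$. Two elements of $G_C$ that agree on $C$ are equal, so $\sigma_1 = \sigma_2\phi$, and likewise $\tau_1 = \tau_2\phi$.

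The main obstacle is justifying the spatial realisation of $\phi$ via Rubin's theorem in this setting, together with the covering claim in step~(iv). If Rubin is unavailable, the fallback is a purely algebraic characterisation: identify $G_C$ as the normal closure of $c_i$ in the subgroup of $G$ generated by $c_i$ and its conjugates under $H_i$. This description is manifestly preserved by $\phi$, but it needs a separate argument that this normal closure equals $G_C$.
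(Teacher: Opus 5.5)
Your proof is correct. Part (i) is exactly the paper's argument. Part (ii) takes a genuinely different route.

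\textbf{The paper's route.} The paper never makes $\phi$ spatial. It notes that $G_C = \langle w_i^{\langle u_i, v_i\rangle}\rangle$: this is the subgroup generated by the conjugates of the nontrivial commutator $[\sigma_i^r,\tau_i^r]$ under a group inducing $(G_C)|_C$ on $C$, and it equals $G_C$ because $G_C$ is simple by Lemma~\ref{lem:restriction}. Since this subgroup is built from words in $\alpha_i,\beta_i$, applying $\phi$ gives $G_C\phi = G_C$ directly. The paper then compares restrictions to $C$ through the automorphism of $(G_C)|_C$ induced by $\phi$.

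This is precisely your fallback. The separate argument you were worried about is immediate from your own steps (ii)--(iii) plus simplicity of $G_C$: $c_i^{H_i} = c_i^{G_C}$, which generates $G_C$.

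\textbf{Your route.} Rubin's theorem does apply: vigour gives a nontrivial rigid stabiliser for every nonempty clopen set, so $G$ is locally moving on a space without isolated points. You replace the algebraic identification of $G_C$ by the support characterisation $C = \bigcup_{\theta \in H_i}\Supp(c_i^\theta)$ to get $Ch = C$. The payoff is the last step: once $\phi$ is conjugation by $h$ with $Ch=C$, the equality $(w_1^{(2)}\phi)|_C = (\sigma_2\phi)|_C$ is immediate.

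\textbf{Comparison.} In the paper's algebraic route, the corresponding step $(u_2\phi)|_C = (u_2|_C)\psi$ tacitly requires that $\phi$ maps the pointwise stabiliser $G_{(C)}$ to itself. This holds because, for vigorous $G$, $G_{(C)}$ is exactly the centraliser of $G_C$ in $G$. Indeed, for any $x \in C$ and any clopen neighbourhood $A \subseteq C$ of $x$, vigour gives an element of $G_A$ moving $x$. An element centralising $G_C$ must preserve the support of that element, so it cannot move $x$.

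So the paper's argument is more elementary, while yours trades a heavy external theorem for a cleaner final comparison. If you ever drop Rubin and use your fallback, this centraliser observation is the one ingredient you would need to add.
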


\begin{proof}
Define the words $u, v, w \in \{ a, a^{-1}, b, b^{-1} \}^*$ as
\[
u = (ab)^{b^{-2}a^{-1}b^{-2}} \qquad
v = (ab)^{b^{-2}a^{-1}b^{-1}} \qquad
w = [((ab)^{b^{-2}a^{-1}b^{-2}})^r, ((ab)^{b^{-2}a^{-1}b^{-1}})^r].
\]
For any word $x \in \{ a, a^{-1}, b, b^{-1} \}^*$, there is a natural map $G \times G \to G$, which we also denote by $x$, where $(\gamma, \delta)x$ is defined by substituting $\gamma$ and $\delta$ for $a$ and $b$ respectively.

First assume that $(\alpha_1, \beta_1) = (\alpha_2, \beta_2)$.
Then, by Proposition~\ref{prop:technical}(iii) and~(iv), 
\begin{gather*}
\sigma_1|_C = (\alpha_1, \beta_1)u|_C = (\alpha_2, \beta_2)u|_C = \sigma_2|_C \quad \text{and} \quad
\tau_1|_C = (\alpha_1, \beta_1)v|_C = (\alpha_2, \beta_2)v|_C = \tau_2|_C,
\end{gather*}
which proves~(i) since $\sigma_1, \sigma_2, \tau_1, \tau_2 \in G_C$.

Now assume that $\phi \in \Aut(G)$ satisfies $(\alpha_1, \beta_1) = (\alpha_2\phi, \beta_2\phi)$.
For $i \in \{1, 2\}$, write
\[
u_i = (\alpha_i, \beta_i)u \qquad v_i = (\alpha_i, \beta_i)v \qquad w_i = (\alpha_i, \beta_i)w.
\]
Let $i \in \{1,2\}$.
Then $\< u_i, v_i \>$ setwise stabilises $C$ and $\< u_i, v_i \>|_C = (G_C)|_C$. 
Moreover, by Proposition~\ref{prop:technical}(v), $w_i = [\sigma_i^r, \tau_i^r]$, which is a nontrivial element of the simple group $G_C$, so $\< w_i^{\< u_i, v_i \>} \> = G_C$. 
However, $(\alpha_1, \beta_1) = (\alpha_2\phi, \beta_2\phi)$, so $u_1 = u_2\phi$, $v_1 = v_2\phi$ and $w_1 = w_2\phi$.
Therefore,
\[
G_C = \< w_1^{\< u_1, v_1 \>} \> = \< w_2^{\< u_2, v_2 \>} \>\phi = G_C\phi,
\]
as claimed.
Let $\psi = \phi|_{(G_C)|_C}$ and note that $\psi \in \Aut((G_C)|_C)$.
Then
\[
\sigma_1|_C = u_1|_C = (u_2\phi)|_C = (u_2|_C)\psi = (\sigma_2|_C)\psi = (\sigma_2\phi)|_C,
\]
which proves $\sigma_1 = \sigma_2\phi$ since $\sigma_1, \sigma_2\phi \in G_C$.
A similar argument gives $\tau_1 = \tau_2\phi$, which completes the proof of~(ii).
\end{proof}

\subsection{Applications}
\label{ss:torsion_applications}

We now use the construction in Section~\ref{ss:torsion_construction} to prove a number of our main theorems.

With a view towards Theorem~\ref{thm:products}, it will be useful to work with generating sequences rather than generating sets.
To be precise, let $G$ be a group.
We say that $(\alpha_1, \dots, \alpha_k) \in G^k$ is a \emph{generating sequence} of $G$ if $\{\alpha_1, \dots, \alpha_k\}$ is a generating set of $G$, and a generating sequence with distinct entries is \emph{(strongly) minimal} if the corresponding generating set is (strongly) minimal.
Additionally, sequences $(\alpha_1, \dots, \alpha_k), (\beta_1, \dots, \beta_k) \in G^k$ are \emph{$\Aut(G)$-equivalent} if there exists $\phi \in \Aut(G)$ such that $(\alpha_1, \dots, \alpha_k) = (\beta_1\phi, \dots, \beta_k\phi)$ and \emph{$\Aut(G)$-inequivalent} otherwise.

Our first result in this section immediately implies Theorems~\ref{thm:torsion} and~\ref{thm:involutions}.

\begin{theorem}
\label{thm:torsion_involutions_strong}
Let $G$ be a finitely generated simple vigorous group.
\begin{enumerate}
\item Let $m \geq 2$ and $n \geq 3$. Then there are infinitely many pairwise $\Aut(G)$-inequivalent strongly minimal generating sequences $(\alpha, \beta)$ of $G$ such that  $|\alpha| = m$ and $|\beta| = n$.
\item There are infinitely many pairwise $\Aut(G)$-inequivalent strongly minimal generating sequences of $G$ consisting of three involutions.
\end{enumerate}
\end{theorem}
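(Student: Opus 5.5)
The plan is to feed infinitely many pairwise $\Aut(G_C)$-inequivalent pairs in $\S_{m,r}(G_C)$ into the construction $\Phi_{m,n}$, and then use Proposition~\ref{prop:technical} for generation and strong minimality and Proposition~\ref{prop:injective}(ii) to transfer inequivalence.

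First fix $r = r(m,n)$ and a clopen set $\emptyset \subsetneq C \subsetneq \C$ with $[C]_G$ even. For instance, take any $U$ and let $C$ be a disjoint union of two sets in $[U]_G$ whose union is proper, which exists by vigour. By Lemma~\ref{lem:restriction}, $G_C \in \mathcal{K}_C^{\text{fg}}$. Restricting to $C$ and identifying $C$ with Cantor space, $(G_C)|_C$ is a finitely generated simple vigorous group. Proposition~\ref{prop:torsion}, applied with parameters $m$, some $n_0$ and $r$, gives $\rho,\sigma_0$ with $|\rho| = m$, $\rho$ of small support, $G_C = \<\rho,\sigma_0\>$ and $[\sigma_0^k,(\rho^\epsilon\sigma_0)^k] \neq 1$ for $k \leq r$. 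Set $\sigma = \sigma_0^{-1}$ and $\tau = \rho\sigma_0$. Then $\sigma\tau = \sigma_0^{-1}\rho\sigma_0$ has order $m$ and small support, $\<\sigma,\tau\> = G_C$, and $[\sigma^r,\tau^r] = [\sigma_0^{-r},(\rho\sigma_0)^r] \neq 1$. The last claim holds because the commutator with $\sigma_0^{-r}$ vanishes if and only if the commutator with $\sigma_0^{r}$ does. So $(\sigma,\tau) \in \S_{m,r}(G_C)$.

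For infinitely many choices, use the final clause of Proposition~\ref{prop:torsion}: $\sigma_0$ may have order $t(2t+1)$ for every $t \geq N$. Hence $|\sigma| = t(2t+1)$ takes infinitely many values. Automorphisms preserve orders, so the resulting pairs $(\sigma_t,\tau_t)$ are pairwise inequivalent under any automorphism of $G_C$.

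Now set $(\alpha_t,\beta_t) = \Phi_{m,n}(G,C,\sigma_t,\tau_t)$. By Proposition~\ref{prop:technical}(i), (ii) and (vi), each of these is a strongly minimal generating sequence with $|\alpha_t| = m$ and $|\beta_t| = n$. Suppose $\phi \in \Aut(G)$ satisfies $(\alpha_s,\beta_s) = (\alpha_t\phi,\beta_t\phi)$. Proposition~\ref{prop:injective}(ii) gives $\sigma_s = \sigma_t\phi$, so $|\sigma_s| = |\sigma_t|$ and hence $s = t$. This proves (i). For (ii), take $m = 2$ and $n = 3$ and apply Proposition~\ref{prop:technical}(vii) to get involutions $\gamma_t,\delta_t$ with $\beta_t = \gamma_t\delta_t$ and $\{\alpha_t,\gamma_t,\delta_t\}$ strongly minimal. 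If $(\alpha_s,\gamma_s,\delta_s)$ is $\Aut(G)$-equivalent to $(\alpha_t,\gamma_t,\delta_t)$ via $\phi$, then $\beta_s = \beta_t\phi$ too, and the same argument gives $s = t$. The three entries are distinct, since $\alpha$ has small support distinct from the others and $\gamma \neq \delta$ because $\beta \neq 1$, so these are genuine generating sequences.

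The main obstacle I expect is the bookkeeping when applying Proposition~\ref{prop:torsion} to $G_C$ rather than to $G$, since the former lives in $\Homeo(\C)_C$. One must check that small support in $C$ and the even-class hypotheses transfer correctly, and that the commutator condition with $\epsilon = +1$ yields exactly the $[\sigma^r,\tau^r] \neq 1$ required by $\S_{m,r}$ after the reparametrisation above.
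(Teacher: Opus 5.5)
Your proposal is correct and is essentially the paper's argument: fix $C$ with $[C]_G$ even, apply Proposition~\ref{prop:torsion} to $G_C$ to get pairs in $\S_{m,r}(G_C)$ whose first entries take infinitely many distinct orders, apply $\Phi_{m,n}$ with Proposition~\ref{prop:technical}(vi) and (vii), and separate the resulting sequences using Proposition~\ref{prop:injective}(ii) together with the fact that automorphisms preserve orders. The only differences are that you spell out details the paper leaves implicit: the existence of $C$, and the reparametrisation $(\sigma,\tau)=(\sigma_0^{-1},\rho\sigma_0)$ that turns the output of Proposition~\ref{prop:torsion} into an element of $\S_{m,r}(G_C)$; your separate distinctness check for the three involutions is unnecessary, since Proposition~\ref{prop:technical}(vii) already gives a generating set of size three.
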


\begin{proof}
Let $m \geq 2$ and $n \geq 3$, and write $r = r(m, n)$.
Let $C$ be a clopen set such that $\emptyset \subsetneq C \subsetneq \C$ and $[C]_G$ is even.
By Lemma~\ref{lem:restriction}, $G_C$ is also a finitely generated simple vigorous group, so, by Proposition~\ref{prop:torsion}, there are integers $0 < l_1 < l_2 < \dots$ such that for each $i$ there exists $(\sigma_i, \tau_i) \in \S_{m, r}(G_C)$ with $|\sigma_i| = l_i$.
For each positive integer $i$, write $(\alpha_i, \beta_i) = \Phi_{m, n}(G, C, \sigma_i, \tau_i)$.

By Proposition~\ref{prop:technical}(vi), $(\alpha_i, \beta_i)$ is a strongly minimal generating sequence for each $i$.
To prove (i), it suffices to prove that $(\alpha_1, \beta_1)$, $(\alpha_2, \beta_2)$, \dots are pairwise $\Aut(G)$-inequivalent.
To see this, fix $i < j$ and suppose that $(\alpha_i, \beta_i) = (\alpha_j\phi, \beta_j\phi)$ for $\phi \in \Aut(G)$.
Then, by Proposition~\ref{prop:injective}(ii), $(\sigma_i, \tau_i) = (\sigma_j\phi, \tau_j\phi)$, which is impossible, since $|\sigma_i| = l_i < l_j = |\sigma_j| = |\sigma_j\phi|$. 

By Proposition~\ref{prop:technical}(vii), for each $i$, we can write $\beta_i = \gamma_i\delta_i$ for involutions $\gamma_i, \delta_i \in G$ such that $(\alpha_i, \gamma_i, \delta_i)$ is a strongly minimal generating sequence, which, specialising to the case $m = 2$, proves (ii), noting that $(\alpha_1, \gamma_1, \delta_1)$,  $(\alpha_2, \gamma_2, \delta_2)$, \dots are pairwise $\Aut(G)$-inequivalent since $(\alpha_1, \gamma_1\delta_1) = (\alpha_1, \beta_1)$, $(\alpha_2, \gamma_2\delta_2) = (\alpha_2, \beta_2)$, \dots are pairwise $\Aut(G)$-inequivalent.
\end{proof}

Theorem~\ref{thm:torsion_involutions_strong} forms a base for an inductive proof of the following more general theorem, which immediately implies Theorem~\ref{thm:minimal}.

\begin{theorem}
\label{thm:minimal_strong}
Let $G \leq \Homeo(\C)$ be a finitely generated simple vigorous group. 
Let $k \geq 2$ and let $m_1 \geq \cdots \geq m_k \geq 2$ be integers where $\{ m_1, m_2 \} \neq \{ 2 \}$ if $k = 2$.
Then there are infinitely many pairwise $\Aut(G)$-inequivalent strongly minimal generating sequences $(\alpha_1, \dots, \alpha_k)$ of $G$ such that $|\alpha_i| = m_i$ for all $1 \leq i \leq k$.
\end{theorem}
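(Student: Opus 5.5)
The plan is to induct on $k$, removing the smallest order $m_k$ at each step. When we pass from $(m_1, \dots, m_k)$ to $(m_1, \dots, m_{k-1})$, the hypothesis of the theorem survives unless $k - 1 = 2$ and $m_1 = m_2 = 2$. Since the $m_i$ are non-increasing, that exceptional case forces $k = 3$ and $m_1 = m_2 = m_3 = 2$. So there are two base cases, both covered by Theorem~\ref{thm:torsion_involutions_strong}:
\begin{enumerate}
\item $k = 2$: here $m_1 \geq 3$ and $m_2 \geq 2$, and we apply part~(i) with $(m, n) = (m_2, m_1)$, reordering the pair.
\item $k = 3$ with all orders equal to $2$: this is part~(ii).
\end{enumerate}

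For the inductive step, fix a clopen set $\emptyset \subsetneq D_0 \subsetneq \C$ with $[D_0]_G$ even. Then choose a clopen set $C$ with $D_0 \subsetneq C$ and $[\C \setminus C]_G = (m_k - 1)[D_0]_G$; Lemmas~\ref{lem:homology} and~\ref{lem:partition} should allow this, for instance by taking $C$ to be the complement of $m_k - 1$ disjoint copies of $D_0$ placed inside $\C \setminus D_0$. By Lemma~\ref{lem:restriction}, $G_C$ is a finitely generated simple vigorous group, viewed on the Cantor space $C$. By induction, $G_C$ has infinitely many pairwise $\Aut(G_C)$-inequivalent strongly minimal generating sequences $(\alpha_1, \dots, \alpha_{k-1})$ with $|\alpha_i| = m_i$. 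For such a sequence, the subset omitting $\alpha_i$ fixes pointwise a nonempty clopen $W_i \subseteq C$, and we may shrink each $W_i$.

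The step I expect to be the main obstacle is choosing $D_0$ compatibly with the sets $W_i$, since these depend on the sequence. I would fix the sequence first, shrink the $W_i$ so that $C \setminus \bigcup_i W_i$ has nonempty interior, and only then choose $D_0$ inside that complement with $[D_0]_G$ even. The relation $[\C \setminus C]_G = (m_k - 1)[D_0]_G$ is then arranged by vigour, using Lemma~\ref{lem:partition}, conjugating the whole configuration if necessary. Once this is in place, Lemma~\ref{lem:existence} gives $\alpha_k \in G$ of order $m_k$ enacting an $m_k$-cycle $(D_0 \ D_1 \ \cdots \ D_{m_k - 1})$, where the $D_j$ for $j \geq 1$ partition $\C \setminus C$.

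We then check the three required properties of $(\alpha_1, \dots, \alpha_k)$.
\begin{enumerate}
\item Generation: $C$ and $C\alpha_k^j$ both contain $D_0 \alpha_k^{j} \cap C$-overlapping pieces, and together these translates cover $\C$, so repeated use of Lemma~\ref{lem:union} gives $G = \< G_C, \alpha_k \>$.
\item Strong minimality: omitting $\alpha_k$ leaves a group fixing $\C \setminus C$ pointwise, while omitting $\alpha_i$ with $i < k$ leaves a group fixing $W_i$ pointwise, because $\Supp(\alpha_k) \cap C \subseteq D_0$ is disjoint from $W_i$.
\item Orders: $|\alpha_i| = m_i$ for all $i$ holds by construction.
\end{enumerate}

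Finally, for inequivalence, suppose $\phi \in \Aut(G)$ maps one such sequence to another. Both first $k - 1$ entries generate $G_C$, so $\phi$ restricts to an automorphism of $G_C$ carrying one $(k-1)$-sequence to the other, contradicting their $\Aut(G_C)$-inequivalence. For this to work, the same $C$ must be used for all the sequences. I would try to achieve this by placing all the $D_0$'s in a common region of $C$. If that fails, I would fix $C$ and $D_0$ once and for all and conjugate each $G_C$-sequence by an element of $G_C$ so that its $W_i$ avoid $D_0$; this preserves $\Aut(G_C)$-inequivalence.
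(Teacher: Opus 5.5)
Your proposal is correct and follows essentially the same route as the paper. Both arguments induct on $k$ with the same two base cases. Both restrict to the rigid stabiliser of the complement of $m_k-1$ disjoint copies of an even-class clopen set (your $C$ is the paper's $D$), and add an $m_k$-cycle with one piece inside that region. Generation then follows from Lemma~\ref{lem:union}: the overlap you need is $C\setminus D_0$, since $C\alpha_k^j=(C\setminus D_0)\sqcup D_j$, not the pieces $D_0\alpha_k^j \cap C$ you wrote, which are empty for $j \geq 1$. Inequivalence comes from restricting $\phi$ to $G_C$.

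The ordering issue you flag is resolved exactly as in your first suggestion. The paper fixes the region once and applies induction. Only then, separately for each sequence, it uses vigour to place the cycle's inner piece (in the same $G$-orbit) inside a set $E$ that contains none of the fixed sets $A_i$.
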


\begin{proof}
We proceed by induction on $k$.
For the base case where $k = 2$, by Theorem~\ref{thm:torsion_involutions_strong}(i), $G$ has infinitely many pairwise $\Aut(G)$-inequivalent strongly minimal generating sequences $(\alpha_1, \alpha_2)$ where $|\alpha_1| = m_1$ and $|\alpha_2| = m_2$.
Now assume that $k > 2$ and the result holds for smaller $k$.
If $k = 3$ and $m_1 = m_2 = m_3 = 2$, then, by Theorem~\ref{thm:torsion_involutions_strong}(ii), $G$ has infinitely many pairwise $\Aut(G)$-inequivalent strongly minimal generating sequences $(\alpha_1, \alpha_2, \alpha_3)$ where $|\alpha_1| = |\alpha_2| = |\alpha_3| = 2$.
Hence, we can assume that $k \geq 3$ and $\{ m_1, m_2, m_3 \} \neq \{ 2 \}$ if $k = 3$.

Let $C$ be a clopen set such that $\emptyset \subsetneq C \subsetneq \C$ and $[C]_G$ is even.
Let $\gamma_1, \dots, \gamma_{{m_k}-1} \in G$ such that $C\gamma_i \cap C\gamma_j \neq \emptyset$ if and only if $i = j$ and $\bigsqcup_{i=1}^{{m_k}-1} C\gamma_i \subsetneq \C$.
Let $D = \C \setminus \bigsqcup_{i=1}^{{m_k}-1} C\gamma_i$.
By Lemma~\ref{lem:restriction}, $G_D$ is a finitely generated simple vigorous group.
Since $k - 1 \geq 2$ and $\{m_1, m_2 \} \neq \{ 2 \}$ if $k - 1 = 2$, by induction, $G_D$ has infinitely many pairwise $\Aut(G)$-inequivalent strongly minimal generating sequences $(\alpha_1, \dots, \alpha_{k-1})$ such that $|\alpha_i| = m_i$ for each $1 \leq i \leq k-1$.

Fix one such sequence $X_0 = (\alpha_1, \dots, \alpha_{k-1})$.
Then, for each $1 \leq i \leq k-1$, we can fix a nonempty clopen set $A_i \subseteq D$ that is fixed by $\{\alpha_1, \dots, \alpha_{k-1}\} \setminus \{ \alpha_i \}$. 
Let $E \subsetneq D$ such that $A_i \not\subseteq E$ for all $1 \leq i \leq k-1$ and let $\gamma_{m_k} \in G$ such that $C\gamma_{m_k} \subseteq E$.
By Lemma~\ref{lem:existence}, let $\alpha_k$ enact the permutation $(C\gamma_1 \ \cdots \ C\gamma_{m_k})$ and note that $|\alpha_k| = m_k$.
Write $X = (\alpha_1, \dots, \alpha_k)$.

We claim that $X$ is a generating sequence of $G$.
Recall that $\C = D \sqcup \bigsqcup_{i=1}^{m_{k-1}} C\gamma_i$ and $C\gamma_{m_k} \subseteq E \subsetneq D$.
Therefore, since $\alpha_k$ enacts $(C\gamma_1 \ \cdots \ C\gamma_{m_k})$, we deduce that $\C = \bigcup_{i=0}^{m_k-1} D\alpha_k^i$ and $D \cap D\alpha_k^i \neq \emptyset$ for all $0 \leq i \leq m_k-1$. 
Hence, by repeated application of Lemma~\ref{lem:union}, we see that $G = \< G_D, \alpha_k \>$. 
However, $G_D = \< \alpha_1, \dots, \alpha_{k-1} \>$, so $G = \< \alpha_1, \dots, \alpha_k \>$, as claimed.

We now claim that $X$ is strongly minimal.
Certainly, $\{ \alpha_1, \dots, \alpha_{k-1} \}$ fixes the clopen set $\bigsqcup_{i=1}^{{m_k}-1} C\gamma_i \subsetneq \C$ pointwise.
Now let $1 \leq i \leq k-1$ and consider $\{ \alpha_1, \dots, \alpha_k \} \setminus \{ \alpha_i \}$.
By construction, $A_i \setminus (E \cap A_i)$ is nonempty and fixed pointwise by $\{ \alpha_1, \dots, \alpha_{k-1} \} \setminus \{ \alpha_i \}$, and it is also fixed pointwise by $\alpha_k$ since $\Supp(\alpha_k) \cap D = C\gamma_{m_k} \subseteq E$.
This proves the claim.

Finally, let $X_0 = (\alpha_1, \dots, \alpha_{k-1})$ and $Y_0 = (\beta_1, \dots, \beta_{k-1})$ be $\Aut(G_D)$-inequivalent strongly minimal generating sequences of $G_D$ and let $X = (\alpha_1, \dots, \alpha_k)$ and $Y = (\beta_1, \dots, \beta_k)$ be strongly minimal generating sequences for $G$ obtained by the previous construction.
We claim that $X$ and $Y$ are $\Aut(G)$-inequivalent.
To prove this, suppose otherwise and fix $\phi \in \Aut(G)$ such that $(\alpha_1, \dots, \alpha_k) = (\beta_1\phi, \dots, \beta_k\phi)$. 
Then $\alpha_i = \beta_i\phi$ for all $1 \leq i \leq k-1$ and $G_D = \< \alpha_1, \dots, \alpha_{k-1} \> = \< \beta_1, \dots, \beta_{k-1} \>$, so $\phi$ restricted to $G_D$ is an element of $\Aut(G_D)$ that maps $Y_0 = (\beta_1, \dots, \beta_{k-1})$ to $(\alpha_1, \dots, \alpha_{k-1}) = X_0$, which contradicts $X_0$ and $Y_0$ being $\Aut(G_D)$-inequivalent.
This proves the claim and completes the induction.
\end{proof}

A benefit of the strong formulation of Theorem~\ref{thm:minimal_strong} is that it quickly implies Theorem~\ref{thm:products}.

\begin{proof}[Proof of Theorem~\ref{thm:products}]
Let $G$ be a finitely generated simple vigorous group and let $s$ be a positive integer.
We claim that $G^s$ has a minimal generating sequence $(\gamma_1, \dots, \gamma_k)$ where $|\gamma_j| = m_j$ for all $1 \leq j \leq k$.
By Theorem~\ref{thm:minimal_strong}, there exist pairwise $\Aut(G)$-inequivalent minimal generating sequences $X_1 = (\alpha_{11}, \dots, \alpha_{1k})$, \dots, $X_s = (\alpha_{s1}, \dots, \alpha_{sk})$ where $|\alpha_{ij}| = m_j$ for all $1 \leq i \leq s$ and $1 \leq j \leq k$. 
For $1 \leq j \leq k$, write $\gamma_j = (\alpha_{1j}, \dots, \alpha_{sj})$ and note that $|\gamma_j| = m_j$ since $|\alpha_{ij}| = m_j$ for all $1 \leq i \leq s$.
As $X_1$, \dots, $X_s$ are generating sequences of $G$, the subgroup $\< \gamma_1, \dots, \gamma_k \>$ is a subdirect product of $G^s$, and as $G$ is simple and $X_1$, \dots, $X_s$ are pairwise $\Aut(G)$-inequivalent, we conclude that $\< \gamma_1, \dots, \gamma_k \> = G^s$.
Finally, $(\gamma_1, \dots, \gamma_k)$ is a minimal generating sequence of $G^s$ since $X_1$ is a minimal generating sequence of $G$.
This proves the claim.

Now let $G = G_1 \times \cdots \times G_l$ as in the statement, and write $G = G_1^{s_1} \times \cdots \times G_t^{s_t}$ where $G_1, \dots, G_t$ are pairwise nonisomorphic and $s_1, \dots, s_t$ are positive integers. 
By the previous claim, for each $1 \leq i \leq t$, we can fix a minimal generating sequence $Y_i = (\gamma_{i1}, \dots, \gamma_{ik})$ such that $|\gamma_{ij}| = m_j$ for all $1 \leq j \leq k$. 
For $1 \leq j \leq k$, write $\delta_j = (\gamma_{1j}, \dots, \gamma_{tj})$ and note that $|\delta_j| = m_j$ since $|\gamma_{ij}| = m_j$ for all $1 \leq i \leq t$.
As $Y_1$, \dots, $Y_t$ are generating sequences of $G_1^{s_1}$, \dots, $G_t^{s_t}$, the subgroup $\< \delta_1, \dots, \delta_k \>$ is a subdirect product of $G_1^{s_1} \times \cdots \times G_t^{s_t}$, and as $G_1$, \dots, $G_t$ are pairwise nonisomorphic simple groups, for $i \neq j$ no composition factor of $G_i^{s_i}$ is isomorphic to $G_j$, so $\< \delta_1, \dots, \delta_k \> = G$.
Finally, $(\delta_1, \dots, \delta_k)$ is a minimal generating sequence of $G$ since $Y_1$ is a minimal generating sequence of $G_1^{s_1}$.
This completes the proof.
\end{proof}

We conclude this section by giving a simpler construction for Thompson's group $V$, proving the claim in Example~\ref{ex:torsion}.

\begin{corollary}
\label{cor:torsion_v}
We have $|\alpha| = 2$, $|\beta| = 3$ and $V = \< \alpha, \beta \>$ where
\begin{gather*}
\alpha = (00 \ 010100)(010110 \ 010111) (100 \ 0110)(110 \ 0111) \cdot (\phi\psi)_{[101]}^{-1} (\phi\psi)_{[111]} (101 \ 111) \\
\beta = (01 \ 10 \ 11)
\end{gather*}
for the elements
\begin{gather*}
\phi = (0000 \ 010 \ 00011 \ 00010)(0010 \ 011 \ 0011) \cdot (000 \ 001) \\
\psi = (1000 \ 10010 \ 10011 \ 110)(1010 \ 1011 \ 111).
\end{gather*}
\end{corollary}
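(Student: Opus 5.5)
The plan is to treat this as a hand-built instance of the construction behind Proposition~\ref{prop:technical}, simplified for $(m,n)=(2,3)$ and specialised to $V$. The pair $(\phi,\psi)$ plays the role of $(\sigma,\tau)$, and the cone $0\C$ (or a suitable subcone of it) plays the role of $C$.

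\textbf{Step 1: orders.} Clearly $|\beta|=3$. For $\alpha$, the four transpositions have pairwise disjoint supports, and these supports are disjoint from $101\C\sqcup 111\C$. Write $x=(\gamma_{[101]})^{-1}\gamma_{[111]}\,s$ with $s=(101\ 111)$ and $\gamma=\phi\psi$. Conjugation by $s$ interchanges $\gamma_{[101]}$ and $\gamma_{[111]}$, so
\[
x^2=(\gamma_{[101]})^{-1}\gamma_{[111]}\,(\gamma_{[111]})^{-1}\gamma_{[101]}=1 .
\]
Since $x\neq 1$, we get $|\alpha|=2$. This mirrors the factor $\tau^{-\gamma_{11}}\tau^{\gamma_{12}}$ combined with $\pi_2$ in Definition~\ref{def:technical}.

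\textbf{Step 2: $\langle\phi,\psi\rangle=V$.} I will first check that $(\phi,\psi)\in\S(V)$, that is, that $\phi\psi$ fixes some cone; for instance, I expect $0110\C$ or a nearby cone to be fixed. Then I show $\langle\phi,\psi\rangle=V$ using Theorem~\ref{thm:criterion}. Concretely, I will look for small words in $\phi,\psi$, such as powers or commutators of the cycles, that produce 3-cycles on a basis $A$ and on its one-step refinement $B$. These should generate $\Alt(A)$ and $\Alt(B)$. The natural candidate for $A$ is the common refinement of the leaves occurring in $\phi$ and $\psi$. The coprime cycle lengths 4 and 3, together with the transposition $(000\ 001)$, should let me extract enough 3-cycles. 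I also need $[\phi^r,\psi^r]\neq 1$ for the relevant $r$; this should be a direct computation.

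\textbf{Step 3: recovering $\phi,\psi$ from $\alpha,\beta$.} Following \eqref{eq:move} and \eqref{eq:power}, I will trace $\alpha\beta$ on the cones $00,01,10,11$ and their subcones. The aim is to find short conjugates of $\alpha\beta$ that stabilise a cone $u\C$ and restrict there to $\phi_{[u]}$ and $\psi_{[u]}$; the natural guesses are $(\alpha\beta)^{\beta^{-1}\alpha\beta^{-1}}$ and similar words. I will also find a power $(\alpha\beta)^N$ that kills all of the auxiliary cycling. This power should leave only $(\gamma_{[101]})^{-1}$ and $\gamma_{[111]}$ raised to the power $N$, so that a commutator of such elements equals $[\phi^N,\psi^N]_{[u]}\neq 1$.

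\textbf{Step 4: conclusion.} By the normal-closure argument in the proof of Proposition~\ref{prop:technical}, using simplicity of $V_{u\C}\cong V$ and Step 2, I get $V_{u\C}\le\langle\alpha,\beta\rangle$. Next I will use the transpositions in $\alpha$ and the 3-cycle $\beta$ to move $u\C$ to cones that overlap it and together cover $\C$. Applying Lemma~\ref{lem:union} repeatedly then yields $V=\langle\alpha,\beta\rangle$.

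\textbf{Main obstacle.} I expect Step 3 to be hardest: the explicit orbit bookkeeping of $\alpha\beta$ on the mixed-depth cones, such as $010100$ and $0110$, needed to identify the right conjugating words and the right exponent $N$. I would first draw the action as in Figure~\ref{fig:torsion} and read off the cycle structure of $\alpha\beta$ from it.
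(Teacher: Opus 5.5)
Your Step~1 is correct. The identification behind Steps~2--4 is wrong, however, and Step~2 asks you to prove something false. Every address in $\phi$ begins with $0$ and every address in $\psi$ begins with $1$. So $\phi \in V_{0\C}$ and $\psi \in V_{1\C}$ have disjoint supports and commute. Consequently:
\begin{itemize}
\item $\langle \phi, \psi \rangle$ is abelian, not $V$;
\item $[\phi^r, \psi^r] = 1$ for every $r$;
\item $|\phi\psi|$ is a multiple of $|\psi| = 12$ (or infinite), so $(\phi,\psi)$ is not even in $\S_2(V)$.
\end{itemize}
The commutator you plan to extract in Step~3 would therefore be trivial, and the normal-closure argument in Step~4 has nothing to start from. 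Looking for $3$-cycles in the displayed cycle forms of $\phi$ and $\psi$ cannot fix this.

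The missing idea is that $\phi$ and $\psi$ are rescaled copies of the real pair. Put $\mu = (000\ 0010\ 0011\ 10)(010\ 011\ 11)$ and $\nu = \mu^{-1}(00\ 01)$. Then $\psi = \mu_{[1]}$ and $\phi = \nu_{[0]}$. The product $\phi\psi$ is only a device for placing $\nu$ and $\mu$ side by side: $(\phi\psi)_{[101]} = \nu_{[1010]}\mu_{[1011]}$ and $(\phi\psi)_{[111]} = \nu_{[1110]}\mu_{[1111]}$.

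Take $C = 010\C$. The pair playing the role of $(\sigma,\tau)$ in Definition~\ref{def:technical} is $(\mu_{[010]}, \nu_{[010]})$. Its product is $(00\ 01)_{[010]}$, which has order $2$ and small support. The sets $C\gamma_{11}, C\gamma_{12}, C\gamma_{21}, C\gamma_{22}$ are $1010\C, 1110\C, 1011\C, 1111\C$.

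What you actually need is the following.
\begin{itemize}
\item $\langle (00\ 01), \mu \rangle = V$. This is where Theorem~\ref{thm:criterion} enters, with bases $\{000, 001, 010, 011, 10, 11\}$ and $\{000, 0010, 0011, 010, 011, 10, 11\}$.
\item $[\mu^{112}, \nu^{112}] \neq 1$, where $112 = r(2,3)$. For instance, $\mu^{112} = (010\ 011\ 11)$, and $\nu$ has the $3$-cycle $10\bar{1} \mapsto 0\bar{1} \mapsto 000\bar{1} \mapsto 10\bar{1}$. So the commutator sends $\bar{1}$ to $0\bar{1}$.
\item An explicit choice of $E = 00\C$ and of the $\gamma_x$ showing that $(\alpha, \beta)$ is a valid output of $\Phi_{2,3}(V, C, \mu_{[010]}, \nu_{[010]})$.
\end{itemize}
Proposition~\ref{prop:technical}(vi) then gives $V = \langle \alpha, \beta \rangle$ directly. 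The hand search for conjugating words and exponents in your Step~3 is therefore unnecessary.
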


Before proving Corollary~\ref{cor:torsion_v}, we first establish a preliminary result.

\begin{lemma}
\label{lem:torsion_v}
Let $\lambda = (00 \ 01)$ and $\mu = (000 \ 0010 \ 0011 \ 10)(010 \ 011 \ 11)$. 
Then $\< \lambda, \mu \> = V$.
\end{lemma}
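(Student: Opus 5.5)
Write $H = \< \lambda, \mu \>$. The plan is to apply Theorem~\ref{thm:criterion}: find a basis $A$ of size at least three and its one-step refinement $B$, and show $\Alt(A), \Alt(B) \leq H$.

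Here $\mu$ is the product of a $4$-cycle and a $3$-cycle on the basis
\[
B_0 = \{000, 0010, 0011, 010, 011, 10, 11\},
\]
so $\mu^4 = (010 \ 011 \ 11)^4 = (010 \ 011 \ 11)$ and $\mu^3 = (000 \ 0010 \ 0011 \ 10)^{3}$. Both therefore lie in $H$; in particular the $3$-cycle $c = (010 \ 011 \ 11)$ and the $4$-cycle $d = (000 \ 0010 \ 0011 \ 10)^{-1}$ are in $H$.

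On the other hand, $\lambda = (00 \ 01)$ acts on the coarser basis $\{00, 01, 10, 11\}$. Refining it to $B_0$, it becomes $\lambda = (000 \ 010)(0010 \ 0110)(0011 \ 0111)$. This is not a permutation of $B_0$, since $011$ is split. So the first step is to conjugate and multiply within $H$ to land on permutations of a common basis.

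The natural choice is to pass to the finer basis
\[
B_1 = \{000, 0010, 0011, 010, 0110, 0111, 10, 11\}.
\]
On $B_1$, $\lambda$ is the permutation $(000 \ 010)(0010 \ 0110)(0011 \ 0111)$, and $d$ permutes $B_1$ as well. The element $c$ does not permute $B_1$ because it moves $011$ to $11$. So the next step is to use $d$ together with conjugates $\lambda^{d^i}$, which are again permutations of $B_1$ by Lemma~\ref{lem:conjugation}, to generate a large symmetric group on $B_1$ (or on a subset of it). I expect $\<\lambda, d\>$ to act primitively on the six leaves $000, 0010, 0011, 010, 0110, 0111$ together with $10$. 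Since $d$ contains a $4$-cycle, a transposition-like or $3$-cycle element should appear, for instance $[\lambda, \lambda^{d}]$, and a primitive group containing a $3$-cycle is alternating or symmetric. This would give $\Alt$ of a basis-subset containing $\{000, 0010, 0011, 010, 0110, 0111, 10\}$.

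Next, bring in $c$. Once $\Alt(\{010, 0110, 0111, \dots\})$ is available, conjugating $c$ by suitable elements yields $3$-cycles involving $11$. Alternatively, write $\Alt(B_0)$ as generated by $c$ together with $3$-cycles on $\{000, 0010, 0011, 010, 10\}$: these live on leaves common to $B_0$ and $B_1$, so they are already obtained. This gives $\Alt(B_0)$.

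It remains to exhibit $\Alt(A)$ with $B_0 = A$ refined at one leaf. Take $A = \{000, 0010, 0011, 010, 011, 1\}$ and split $1$ into $10, 11$. Alternatively, take $B = B_1$ refined from $B_0$ at $011$: we have $\Alt(B_0)$, and we need $\Alt(B_1)$, which the second step nearly gives once $11$ is added via conjugation by $\Alt(B_0)$ elements fixing $011$. Then Theorem~\ref{thm:criterion} with $A = B_0$ and $B = B_1$ gives $H = V$.

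The main obstacle is the explicit group computation in the second step: verifying that specific products such as $[\lambda, \lambda^{d}]$ are honest $3$-cycles of leaves in $B_1$, and that the resulting permutation group on $B_1$ is primitive. This is finite bookkeeping, and I would do it by tracking images of the eight leaves of $B_1$ under $\lambda$ and $d$.
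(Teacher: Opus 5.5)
Your overall route works, but the concrete witness you give for the key step is wrong: $[\lambda,\lambda^{d}]$ is not a $3$-cycle. Since $\lambda$ and $\lambda^{d} = (10 \ 010)(000 \ 0110)(0010 \ 0111)$ are involutions, $[\lambda,\lambda^{d}] = (\lambda\lambda^{d})^2$. Here $\lambda\lambda^{d} = (000 \ 10 \ 010 \ 0110 \ 0111 \ 0011 \ 0010)$ is a $7$-cycle, so the commutator is a $7$-cycle too, and the verification you flag as the main obstacle would fail for that candidate.

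A genuine $3$-cycle does exist. We have $d^2 = (000 \ 0011)(10 \ 0010)$, so $\lambda d^2 = (000 \ 010 \ 0011 \ 0111)(0010 \ 0110 \ 10)$. Hence $(\lambda d^2)^4 = (0010 \ 0110 \ 10) \in H$. Since $\langle \lambda, d \rangle$ is transitive on the seven leaves of $B_1 \setminus \{11\}$, it is primitive (prime degree), and Jordan's theorem gives $\Alt(B_1 \setminus \{11\}) \leq H$.

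With that repaired, your second option goes through:
\begin{enumerate}
\item $c$ and $\Alt(\{000, 0010, 0011, 010, 10\})$ generate a transitive group of prime degree $7$. It contains $3$-cycles and consists of even permutations, so it is $\Alt(B_0)$.
\item Conjugating $(000 \ 0010 \ 10)$ by $(10 \ 11 \ 010) \in \Alt(B_0)$, which fixes $011\C$ pointwise and hence permutes $B_1$, gives $(000 \ 0010 \ 11)$. Together with $\Alt(B_1 \setminus \{11\})$ this yields $\Alt(B_1) \leq H$.
\item Theorem~\ref{thm:criterion} then applies to $B_0$ and its refinement $B_1$ at $011$.
\end{enumerate}
Drop the first option involving the leaf $1$: nothing you have produced controls $\Alt$ of a basis containing $1$.

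The paper coarsens rather than refines. It observes that $\lambda = (000 \ 010)(001 \ 011)$ and $\mu^4 = (010 \ 011 \ 11)$ already permute $A = \{000, 001, 010, 011, 10, 11\}$. A double transposition and a $3$-cycle on five points visibly give $\Alt(A \setminus \{10\})$. The conjugate $(010 \ 011 \ 000)^{\mu^3} = (010 \ 011 \ 10)$ then gives $\Alt(A)$. Finally, $\mu^3 = (000 \ 10 \ 0011 \ 0010)$ and the $5$-cycle $(000 \ 10 \ 010 \ 011 \ 11) \in \Alt(A)$ give $\Sym(B_0)$, so the criterion is applied to the pair $(A, B_0)$.

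What the paper's choice buys is that its first subgroup is recognisably alternating at a glance. Your choice starts from three disjoint transpositions and a $4$-cycle on seven points, where a $3$-cycle has to be dug out by explicit computation, which is precisely where your proposal went astray. Both pairs of bases are legitimate inputs to Theorem~\ref{thm:criterion}.
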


\begin{proof}
By Theorem~\ref{thm:criterion}, it suffices to show that $\Alt(A) \leq \< \lambda, \mu \>$ and $\Alt(B) \leq \< \lambda, \mu \>$ for
\[
A = \{ 000, 001, 010, 011, 10, 11 \} \text{ and } B = \{ 000, 0010, 0011, 010, 011, 10, 11 \}.
\]

First note that $\lambda = (000 \ 010)(001 \ 011)$ and $\mu^4 = (010 \ 011 \ 11)$, so 
\[
\Alt(A \setminus \{ 10 \}) = \< (000 \ 010)(001 \ 011), (010 \ 011 \ 11) \> \leq \< \lambda, \mu \>.
\]
In particular, $\< \lambda, \mu \>$ contains every $3$-cycle in $\Alt(A \setminus \{ 10 \})$.
However, 
\[
(010 \ 011 \ 10) = (010 \ 011 \ 000)^{\mu^3} \in \< \lambda, \mu \>,
\] 
and conjugating by suitable elements of $\Alt(A \setminus \{ 10 \})$ shows that $\< \lambda, \mu \>$ contains every $3$-cycle in $\Alt(A)$, so $\Alt(A) \leq \< \lambda, \mu \>$.

Next note that $\mu^3 = (000 \ 10 \ 0011 \ 0010)$ and $(000 \ 10 \ 010 \ 011 \ 11) \in \Alt(A)$, so
\[
\Alt(B) \leq \Sym(B) = \< (000 \ 10 \ 0011 \ 0010), (000 \ 10 \ 010 \ 011 \ 11) \> \leq \< \lambda, \mu \>. \qedhere
\]
\end{proof}

\begin{proof}[Proof of Corollary~\ref{cor:torsion_v}]
Let $\lambda = (00 \ 01)$ and $\mu = (000 \ 0010 \ 0011 \ 10)(010 \ 011 \ 11)$, and write $\nu = \mu^{-1}\lambda$.
In terms of the elements in the statement, note that $\phi = \nu_{[0]}$ and $\psi = \mu_{[1]}$.

Let $\rho = \lambda_{[010]}$ and $\sigma = \mu_{[010]}$, so $\tau = \sigma^{-1}\rho = \nu_{[010]}$, and let $C = 010\C$.
We claim that $(\sigma, \tau) \in \S_{2, 112}(V_C)$, noting that $112 = r(2,3)$.
To prove this, first note that $\sigma\tau = \rho = \lambda_{[010]}$, which is an element of small support and order $2$.
Next note that 
\[
\< \sigma, \tau \> = \< \rho, \sigma \> = \< \lambda_{[010]}, \mu_{[010]} \>_{[010]} = V_C,
\]
where the final equality holds by Lemma~\ref{lem:torsion_v}.
It remains to prove that $[\sigma^{112}, \tau^{112}] \neq 1$.
Note that $112 \equiv 0 \pmod{4}$ and $112 \equiv 1 \pmod{3}$.
In particular, $\mu^{112} = (010 \ 011 \ 11)$.
Under the action of $\nu = \mu^{-1}\lambda$,
\[
10\bar{1} \mapsto 0\bar{1} \mapsto 000\bar{1} \mapsto 10\bar{1},
\]
so $\nu^{112}$ maps $10\bar{1}$ to $0\bar{1}$.
Therefore, $[\mu^{112},\nu^{112}]$ maps $\bar{1}$ to $0\bar{1}$, so $[\mu^{112},\nu^{112}]$ is nontrivial.
As a result, $[\sigma^{112}, \tau^{112}]$ is nontrivial, as required.
This proves the claim.

Therefore, it remains to prove that $(\alpha, \beta)$ in the statement is (a valid choice of) the pair $\Phi_{2, 3}(V, C, \sigma, \tau)$. 
To do this, it suffices to give choices for the elements and clopen sets featuring in Definition~\ref{def:technical} that line up with the description of the elements in the statement.

Define $E = 00\C$ and recall that $C = 010\C$. Define
\[
\begin{array}{lll}
\gamma_{00}  = 1               & \gamma_{01}  = (010 \ 100   )  & \gamma_{02}  = (010 \ 110   ) \\
\gamma_{10}  = (010 \ 0110  )  & \gamma_{11}  = (010 \ 1010  )  & \gamma_{12}  = (010 \ 1110  ) \\
\gamma_{20}  = (010 \ 0111  )  & \gamma_{21}  = (010 \ 1011  )  & \gamma_{22}  = (010 \ 1111  ) \\
\gamma_{001} = (00  \ 010100)  & \gamma_{010} = (00  \ 010110)  & \gamma_{011} = (00  \ 010111).
\end{array}
\]
Then, the element $\beta$ in Definition~\ref{def:technical} is
\begin{gather*}
(C\gamma_{00} \ C\gamma_{01} \ C\gamma_{02})(C\gamma_{10} \ C\gamma_{11} \ C\gamma_{12})(C\gamma_{20} \ C\gamma_{21} \ C\gamma_{22}) \\
= (010 \ 100 \ 110)(0110 \ 1010 \ 1110)(0111 \ 1011 \ 1111) 
= (01 \ 10 \ 11) = \beta.
\end{gather*}
In addition,
\begin{gather*}
\pi_1 = (C\gamma_{01} \ C\gamma_{10})(C\gamma_{02} \ C\gamma_{20}) = (100 \ 0110)(110 \ 0111) \\
\pi_2 = (C\gamma_{12} \ C\gamma_{11})(C\gamma_{22} \ C\gamma_{21}) = (1110 \ 1010)(1111 \ 1011) = (101 \ 111) \\
\pi_3 = (E \ E\gamma_{001})(E\gamma_{010} \ E\gamma_{011}) = (00 \ 010100)(010110 \ 010111) \\
\tau^{-\gamma_{11}} \tau^{\gamma_{12}} \sigma^{-\gamma_{21}} \sigma^{\gamma_{22}} 
= (\nu_{[1010]})^{-1} \nu_{[1110]} (\mu_{[1011]})^{-1} \mu_{[1111]} 
= (\phi\psi)^{-1}_{[101]}(\phi\psi)_{[111]}.
\end{gather*}
and bringing this together, the element $\alpha$ in Definition~\ref{def:technical} is
\begin{gather*}
(\phi\psi)^{-1}_{[101]}(\phi\psi)_{[111]} (100 \ 0110)(110 \ 0111)  (101 \ 111)  (00 \ 010100)(010110 \ 010111) \\ 
= (00 \ 010100)(010110 \ 010111)(100 \ 0110)(110 \ 0111) \cdot (\phi\psi)^{-1}_{[101]}(\phi\psi)_{[111]} (101 \ 111) = \alpha.
\end{gather*}
In this way, we have obtained the elements in the statement.
\end{proof}

\section{Uniform spread} 
\label{s:spread}

\subsection{Lower bounds}
\label{ss:spread_lower}

We now give our results on uniform spread, beginning with a strong version of Theorem~\ref{thm:spread}.

\begin{proposition}
\label{prop:spread}
Let $G \leq \Homeo(\C)$ be a finitely generated simple vigorous group. 
Then there exists an element $\sigma \in G$ of small support and order 30 such that for any nontrivial element $\alpha \in G$ there exists $\beta \in \sigma^G$ such that $\< \alpha, \beta \> = G$.
\end{proposition}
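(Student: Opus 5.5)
The plan is to build $\sigma$ from the $(2,3)$-machinery and then use the freedom to conjugate $\sigma$ so that it interacts correctly with a given $\alpha$. The number $30 = 2\cdot 3\cdot 5$ suggests that $\sigma$ should be a product of commuting pieces of coprime orders $2$, $3$ and $5$ with disjoint supports. Powers of $\sigma$ then isolate each piece: $\sigma^{15}$ is the order-$2$ part, $\sigma^{10}$ the order-$3$ part and $\sigma^{6}$ the order-$5$ part.

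First, fix a clopen $C$ with $[C]_G$ even, with $\C\setminus C$ large. By Lemma~\ref{lem:restriction} and Theorem~\ref{thm:torsion_involutions_strong}, take $\rho,\theta\in G_C$ of orders $2$ and $3$ with $G_C = \<\rho,\theta\>$. Let $\sigma = \rho\theta\kappa$, where $\kappa$ is supported on a clopen set disjoint from $C$ and enacts a $5$-cycle $(D\,D\gamma_1\cdots D\gamma_4)$ via Lemma~\ref{lem:existence}, with $D\cap C \ne\emptyset$ after a suitable rearrangement. More precisely, I would place $\rho,\theta$ on $C$ and let the $5$-cycle move $C$ to translates whose union covers $\C$ except a nonempty clopen set. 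Then $\sigma$ has small support and order $30$, and
\[
\< \sigma^{15}, \sigma^{10}, \sigma^6\> \ni \rho,\ \theta,\ \kappa .
\]
So $\<\sigma\> \geq G_C$, and the $\kappa$-translates of $C$ overlap in a connected pattern. By Lemma~\ref{lem:union} (as in the proof of Proposition~\ref{prop:torsion}), $\<G_C,\kappa,\chi\> = G$ for any $\chi$ that moves the fixed-point set of $\<G_C,\kappa\>$ so that it overlaps the region already covered.

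Now let $\alpha \ne 1$. Since $\alpha$ is a nontrivial homeomorphism, there is a nonempty clopen $U$ with $U\alpha\cap U=\emptyset$, and we may shrink $U$ freely. Choose $g\in G$ so that, for $\beta = \sigma^g$, the region $W = \Supp(\sigma^g)$'s fixed complement $F$ (the part of $\C$ not covered by $\bigcup_i (C\kappa^i)g$) satisfies two conditions: $F\subseteq U\cup U\alpha^{-1}$ in a controlled way, and $F\alpha$ meets the covered region. Vigour, together with Lemma~\ref{lem:partition}, lets us move the small set $F$ anywhere with $g$. Then $\<\beta\> \geq (G_C)^g$, and the $\kappa^g$-translates give $G_{\C\setminus F}$ by Lemma~\ref{lem:union}. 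Adjoining $\alpha$, the conjugate $(G_{\C\setminus F})^\alpha = G_{\C\setminus F\alpha}$ lies in $\<\alpha,\beta\>$. Since $(\C\setminus F)\cup(\C\setminus F\alpha) = \C$ when $F\cap F\alpha = \emptyset$, and the two sets intersect, Lemma~\ref{lem:union} gives $G = \<\alpha,\beta\>$.

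The main obstacle is the first step: ensuring that $\<\sigma\>$ genuinely contains all of $G_{\C\setminus F}$. We know $\<\rho,\theta\> = G_C$, but the powers $\sigma^{15},\sigma^{10},\sigma^6$ only recover $\rho,\theta,\kappa$ individually if the pieces are arranged with disjoint supports. Making $\kappa$ disjoint from $C$ while still having its translates connect $C$ to the rest of $\C\setminus F$ is therefore delicate. I would first try putting $\rho\theta$ on $C$ and making $\kappa$ a $5$-cycle on translates of a set $D\supsetneq$ overlapping $C$ but commuting with $\rho\theta$, for instance by taking $\kappa$ to cycle five copies $C\gamma_i$ of $C$ and to act as $\rho\theta$-equivariantly on them. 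If that fails, I would replace $G_C$ by the rigid stabiliser of a union of copies, using Proposition~\ref{prop:technical} to generate it.
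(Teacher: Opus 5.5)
\textbf{There is a genuine gap.} It sits at the step you flag as the main obstacle, and that step is impossible, not merely delicate.

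The group $\langle\sigma\rangle$ is cyclic of order $30$. So neither $\langle\sigma\rangle \geq G_C$ nor $\langle\beta\rangle \geq (G_C)^g$ can hold, and still less $\langle\beta\rangle \geq G_{\C\setminus F}$, because these rigid stabilisers are infinite and nonabelian. Concretely, $\rho,\theta \in \langle\sigma\rangle$ would force $\rho$ and $\theta$ to commute, so $\langle\rho,\theta\rangle$ would be abelian, contradicting $\langle\rho,\theta\rangle = G_C$. If they do not commute, then $\sigma^{15}$ and $\sigma^{10}$ are not $\rho$ and $\theta$, and $\sigma=\rho\theta\kappa$ need not have order $30$.

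No rearrangement of supports repairs this. That includes a $\rho\theta$-equivariant $5$-cycle and replacing $G_C$ by a larger rigid stabiliser via Proposition~\ref{prop:technical}. In your scheme $\alpha$ only translates a group that $\beta$ has supposedly already generated. In fact $\alpha$ must do genuine generating work, since $\beta$ alone generates a finite cyclic group.

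\textbf{The missing idea.} Place the two generators of $G_C$ on two disjoint copies of $C$, and let $\alpha$ glue them together.

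The paper's construction is as follows.
\begin{itemize}
\item It takes $G_C=\langle\mu,\nu\rangle$ with $|\mu|=2$ and $|\nu|=5$, and $\gamma$ with $C\cap C\gamma=\emptyset$. It sets $D=\C\setminus(C\sqcup C\gamma)$.
\item It defines $\sigma=\lambda\mu\nu^\gamma$, where $\lambda$ enacts the $3$-cycle $(D\gamma_1\ D\gamma_2\ D)$ with $D\gamma_1\subseteq \Fix_C(\mu)$ and $D\gamma_2\subseteq\Fix_{C\gamma}(\nu^\gamma)$.
\item The three pieces have disjoint supports and coprime orders, so each is a power of $\sigma$. No single power contains both generators of $G_C$.
\end{itemize}

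Given $\alpha\neq 1$, take a translate $U$ of $C$ with $U\cap U\alpha=\emptyset$, and let $V=\C\setminus(U\sqcup U\alpha)$. Then choose one element $\tau$ with $C\tau=U$, $C\gamma\tau=U\alpha$ and $D\tau=V$. This needs Lemma~\ref{lem:partition} to get $[D]_G=[V]_G$, and approximate fullness to combine the partial maps. That is more than ``vigour moves the small set $F$ anywhere'', because three sets must be controlled simultaneously.

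Set $\beta=\sigma^\tau$. Then $\nu^{\gamma\tau\alpha^{-1}}=\nu^{\tau}$, so $G_U=\langle\mu^\tau,\nu^\tau\rangle\leq\langle\alpha,\beta\rangle$. Here $\alpha$ carries the order-$5$ piece from $U\alpha$ back onto $U$. Finally, $\lambda^\tau$ and $\alpha$ link $U$, $V$ and $U\alpha$ through overlapping translates of a clopen set $W$ with $D\gamma_1\tau\subsetneq W\subsetneq\Fix_U(\mu^\tau)$. Lemma~\ref{lem:union} then gives $\langle\alpha,\beta\rangle=G$.
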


\begin{proof}
Fix a clopen set $\emptyset \subsetneq C \subsetneq \C$. 
Since $G$ is vigorous, we may fix $\gamma \in G$ such that $C \cap C\gamma = \emptyset$ and $C \cup C\gamma \subsetneq \C$. 
Write $\C = C \sqcup C\gamma \sqcup D$. 
By Lemma~\ref{lem:restriction}, $G_C \in \mathcal{K}_C^{\text{fg}}$ since $G \in \mathcal{K}^{\text{fg}}$. 

By Theorem~\ref{thm:torsion_involutions_strong}(i), there exist $\mu, \nu \in G_C$ of small support such that $|\mu| = 2$, $|\nu| = 5$ and $G_C = \< \mu, \nu \>$. 
Fix $\gamma_1, \gamma_2 \in G$ such that $D\gamma_1 \subsetneq \Fix_C(\mu)$ and $D\gamma_2 \subsetneq \Fix_{C\gamma}(\nu^\gamma)$. 
By Lemma~\ref{lem:existence}, we can fix $\lambda \in G$ that enacts the 3-cycle $(D\gamma_1 \ D\gamma_2 \ D)$. 
Define $\sigma = \lambda\mu\nu^{\gamma}$. 
Notice that $\lambda$, $\mu$ and $\nu^\gamma$ have coprime order and disjoint support, so each of these elements is a suitable power of $\sigma$. 
Observe also that $|\sigma| = |\lambda||\mu||\nu| = 30$. 
Moreover, $\sigma$ is an element of small support since it fixes $\Fix_C(\mu) \setminus D\gamma_1 \neq \emptyset$.

Let $\alpha \in G$ be an arbitrary nontrivial element. 
Fix a clopen set $\emptyset \subsetneq U_0 \subsetneq \C$ such that $U_0 \cap U_0\alpha = \emptyset$ and $U_0 \cup U_0\alpha \subsetneq \C$. 
Since $G$ is vigorous, there exists $\tau_1 \in G$ such that $C\tau_1 \subseteq U_0$. 
Let $U = C\tau_1$. 
Note that $U \cap U\alpha = \emptyset$ and $U \cup U\alpha \subsetneq \C$. 
Note also that $U\alpha = (C\gamma)\tau_2$ where $\tau_2 = \gamma^{-1}\tau_1\alpha$. 
Write $\C = U \sqcup U\alpha \sqcup V$. 
By Lemma~\ref{lem:partition}, 
\[
[C]_G + [C\gamma]_G + [D]_G = [\C]_G= [U]_G + [U\alpha]_G + [V]_G.
\] 
Now $[C]_G = [C\gamma]_G = [U]_G = [U\alpha]_G$, so $[D]_G = [V]_G$. 
In particular, there exists $\tau_3 \in G$ such that $V = D\tau_3$. 
Since $G$ is approximately full, there exists $\tau \in G$ such that $\tau|_C = \tau_1|_C$ and $\tau|_{C\gamma} = \tau_2|_{C\gamma}$; it follows that $D\tau = V$. 

Let $\beta = \sigma^\tau$. 
We claim that $\< \alpha, \beta \> = G$. 
First note that $\mu^\tau, \nu^{\gamma\tau} \in \< \beta \>$ and 
\[
\< \alpha, \beta \> \geq \< \mu^\tau, \nu^{\gamma\tau\alpha^{-1}} \> = \< \mu^\tau, \nu^\tau \> = G_C^\tau = G_U,
\] 
noting that $\nu^{\gamma\tau\alpha^{-1}} = \nu^{\gamma\tau_2\alpha^{-1}} = \nu^{\tau_1} = \nu^\tau$. 
Next note that $\lambda^\tau \in \< \beta \>$ enacts the permutation $(D\gamma_1\tau \ D\gamma_2\tau \ D\tau)$. 
Fix a clopen set $D\gamma_1\tau \subsetneq W \subsetneq \Fix_U(\mu^\tau)$; since $W \leq U$, we know that $G_W \leq G_U \leq \< \alpha, \beta \>$. 
Note that 
\[
\C = U \cup W \cup W(\lambda^\tau)^2 \cup W\lambda^\tau \cup U\alpha
\]
where adjacent terms in this union are not disjoint; in particular, note 
\[
W(\lambda^\tau)^2 \supseteq D\gamma_1\tau(\lambda^\tau)^2 = D\tau = V.
\] 
Therefore, repeated application of Lemma~\ref{lem:union}, implies that $G = \< G_U, G_W, \alpha, \lambda^\tau \> \leq \< \alpha, \beta \>$.
\end{proof}

Next we focus on Thompson's group $V$ and prove a strong version of Theorem~\ref{thm:spread_two_swaps}.

\begin{proposition} 
\label{prop:spread_two_swaps}
There exists an element $\sigma \in V$ such that $\< (00 \ 01), \sigma \> = \< (10 \ 11), \sigma \> = V$.
Moreover, $\sigma$ can be chosen to have order $7$.
\end{proposition}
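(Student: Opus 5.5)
The plan is to find a single element $\sigma$ of order $7$ that works for both transpositions at once, by exploiting symmetry. Let $\kappa \in V$ be the involution swapping $0\C$ and $1\C$, that is $(0w)\kappa = 1w$. Then $(00 \ 01)^\kappa = (10 \ 11)$. So if $\sigma$ is chosen with $\sigma^\kappa \in \<\sigma\>$, for instance $\sigma^\kappa = \sigma^{j}$, the second equality follows from the first by conjugating by $\kappa$: we get $\<(10 \ 11), \sigma\> = \<(00 \ 01), \sigma^{j}\>^\kappa$. A cleaner way to arrange this is to take $\sigma$ in permutation notation on a $\kappa$-invariant basis, with $\kappa$ conjugating the $7$-cycle to a power of itself. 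This is possible because the normaliser of a $7$-cycle in $\Sym(7)$ has order $42$, so it contains elements inverting the cycle, or squaring it. In practice, however, a basis of size $7$ cannot be $\kappa$-invariant with $\kappa$ fixing the cycle structure nicely, since $7$ is odd. So the fallback is to check the two generation statements directly for a carefully chosen explicit $\sigma$.

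Concretely, I will pick a basis $B$ of size $8$ refining $\{00, 01, 10, 11\}$, say
\[
B = \{000, 001, 01, 100, 101, 110, 1110, 1111\},
\]
or a similar one. I then take $\sigma$ to be a $7$-cycle in $\Sym(B)$ that fixes one leaf, so that the cycle is long enough to move $00\C$ and $10\C$ around.

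The generation will be verified with Theorem~\ref{thm:criterion}. For $\lambda = (00 \ 01)$, written on $B$ as a product of transpositions of leaves, the first step is to show $\Alt(A) \leq \<\lambda, \sigma\>$ for a basis $A$, using the pattern of Lemma~\ref{lem:torsion_v}. The key fact there is that a $7$-cycle together with a suitable double transposition generates $\Alt(7)$ or $\Sym(7)$, because $7$ is prime and the group is transitive, hence primitive. Next I will produce $\Alt(A')$ for a basis $A'$ obtained from $A$ by splitting one leaf. This comes from conjugating a $3$-cycle by $\sigma$ to move a caret, as in the step $(010 \ 011 \ 10) = (010 \ 011 \ 000)^{\mu^3}$. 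The analogous computation for $(10 \ 11)$ then completes the proof. Corollary~\ref{cor:torsion_v_intro} and Lemma~\ref{lem:torsion_v} serve as the template: $\lambda$ there is exactly $(00 \ 01)$, while its $\mu$ has order $12$.

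I expect the main obstacle to be the choice of $\sigma$ itself. The cycle must interact well with both the depth-two split under $0$ and the one under $1$, and one fixed leaf of a $7$-cycle on an $8$-leaf basis breaks the symmetry. I would first try a $\sigma$ whose cycle passes through leaves under both $00$, $01$ and $10$, $11$, and then check primitivity of the induced permutation groups on $A$ and $A'$ in each of the two cases.
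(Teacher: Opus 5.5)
\textbf{The fallback choice of $\sigma$ cannot work, whichever basis or cycle you pick.} The transposition $(00 \ 01)$ fixes $1\C$ pointwise, $(10 \ 11)$ fixes $0\C$ pointwise, and $V$ has no global fixed point. So any $\sigma$ as in the statement must be fixed-point-free. Indeed, a point $p$ with $p\sigma = p$ lies in $0\C$ or in $1\C$, and is then fixed by $\<(10 \ 11), \sigma\>$ or by $\<(00 \ 01), \sigma\>$. A $7$-cycle on your $8$-leaf basis that fixes a leaf $u$ fixes all of $u\C$, so one of the two subgroups is proper. For order $7$ this forces $\sigma$ to be a product of $7$-cycles on a basis of size divisible by $7$, with no fixed leaf. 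This is why the paper starts from $\rho$ with $|\rho| = 7$ and $\Supp(\rho) = \C$, and it is precisely the case you set aside.

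\textbf{The verification step also fails as described.}
\begin{enumerate}
\item $(00 \ 01)$ is not a permutation of the leaves of your $B$, since it maps $01\C$ onto $000\C \sqcup 001\C$. If it were, $\<(00 \ 01), \sigma\>$ would be a finite subgroup of $\Sym(B)$.
\item The mechanism of Lemma~\ref{lem:torsion_v} extracts the pieces $\mu^3$ and $\mu^4$ of an element of order $12$. It is unavailable here, because every nontrivial power of an element of prime order $7$ has the same support.
\item Transitivity of prime degree gives primitivity but not $\Alt(7)$: $\mathrm{PSL}_3(2)$ acting on the seven points of the Fano plane contains both a $7$-cycle and double transpositions.
\end{enumerate}

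\textbf{The symmetry idea you abandoned is the one that works.} The parity obstruction you found only rules out symmetries that permute the same seven leaves as $\sigma$. The paper proceeds as follows.
\begin{enumerate}
\item It writes $\C = \bigsqcup_{i=0}^6 U\rho^i$, takes an involution $\pi$ with $\Supp(\pi) = U$, and defines $\gamma$ by $x\rho^i \mapsto (x\pi)\rho^{-i}$ for $x \in U$. This is a fixed-point-free involution with $\rho^\gamma = \rho^{-1}$: the block a reflection would fix is split into two halves that $\pi$ swaps.
\item It builds an involution $\beta$ from a generating triple of involutions of $V_U$ (Lemma~\ref{lem:involutions_v_support}) with $\Supp(\beta) \sqcup \Supp(\beta^\gamma) = \C$.
\item It proves $\<\beta, \rho\> = V$ structurally, using the second derived subgroup of $\<\beta, \beta^{\rho^{-1}}, \beta^{\rho^{-2}}\>$ and an iterated commutator supported in $U$. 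Then $\<\beta^\gamma, \rho\> = \<\beta, \rho\>^\gamma = V$ comes for free.
\item Only at the end does it conjugate by some $\tau \in V$ with $\beta^\tau = (00 \ 01)$ and $(\beta^\gamma)^\tau = (10 \ 11)$.
\end{enumerate}
So you need neither a conjugator of the specific form $0w \mapsto 1w$, nor to fix the two transpositions before choosing $\sigma$.
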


To prove Proposition~\ref{prop:spread_two_swaps}, we first establish a slight strengthening of Theorem~\ref{thm:involutions} for $V$.

\begin{lemma} \label{lem:involutions_v_support}
There exist involutions $\alpha, \beta, \gamma \in V$ such that $\Supp(\alpha) \subsetneq \Supp(\beta) = \Supp(\gamma) = \C$ and $V = \< \alpha, \beta, \gamma \>$.
\end{lemma}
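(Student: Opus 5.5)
Since the statement is about explicit elements of $V$, I would prove it by writing down three specific involutions in permutation notation and checking generation with Theorem~\ref{thm:criterion}, as in the proof of Example~\ref{ex:involutions}. The idea is to take the involutions of Example~\ref{ex:involutions} and modify $\beta$ and $\gamma$ so that they have full support. Concretely, I would fix a basis such as
\[
B = \{ 000, 001, 010, 011, 10, 110, 111 \},
\]
and take $\beta$ and $\gamma$ to be fixed-point-free involutions on a basis refining a common basis of $\C$. Every leaf is then moved, so $\Supp(\beta) = \Supp(\gamma) = \C$. The element $\alpha$ would be a short product of transpositions of basis elements that misses at least one leaf; this gives $\Supp(\alpha) \subsetneq \C$, and hence $\Supp(\alpha) \subsetneq \Supp(\beta)$.

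A fixed-point-free involution needs an even number of leaves, so I would work with the six-leaf basis $A = \{000, 001, 010, 011, 10, 11\}$ and with $B$, splitting a leaf to change parity where needed. A natural first attempt is
\[
\beta = (000 \ 001)(010 \ 011)(10 \ 11), \qquad \gamma = (000 \ 10)(001 \ 110)(010 \ 111),
\]
modified so that $011$ is also moved. For example, $\gamma$ can be written on the eight-leaf basis obtained from $B$ by splitting $011$ into $0110, 0111$, with the extra swap $(0110 \ 0111)$. For the restricted element I would take $\alpha = (001 \ 010)$, or a similar short product of transpositions of basis elements.

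The verification then follows the same pattern as the proof of Example~\ref{ex:involutions}. First I would show that some products, such as $\alpha\beta$, $\beta\gamma$ and their powers, produce a long cycle together with a transposition on $B$, which gives $\Sym(B) \leq \<\alpha,\beta,\gamma\>$. Next I would recover a transposition that lets me generate $\Sym(A)$ as well. Theorem~\ref{thm:criterion} then gives $V = \<\alpha,\beta,\gamma\>$.

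I expect the main obstacle to be the bookkeeping. Making $\gamma$ fixed-point-free forces an extra split such as $(0110 \ 0111)$. Such a swap is not a permutation of $A$ or of $B$, so it can obstruct the clean argument "$\Sym(B)$, then $\Sym(A)$". To handle this, I would first try to cancel the extra swap using products with $\beta$. If that fails, I would adjust the choice of bases fed into Theorem~\ref{thm:criterion}, for instance by using the refined basis that contains $0110, 0111$ together with the basis obtained by splitting one more leaf, and I would search the small candidate involutions by hand until the two symmetric groups close up.
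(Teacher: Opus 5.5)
Your proposal does not yet prove the lemma. Its whole content is the generation claim, and you neither fix a triple nor verify that it generates $V$: the last paragraph defers this to a search by hand.

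More seriously, the one concrete triple you write down cannot work. Take $\alpha = (001\ 010)$, $\beta = (000\ 001)(010\ 011)(10\ 11)$ and $\gamma = (000\ 10)(001\ 110)(010\ 111)(0110\ 0111)$. Put $X = 000\C \sqcup 011\C \sqcup 10\C$ and $Y = 001\C \sqcup 010\C \sqcup 11\C$.
\begin{itemize}
\item[] $\alpha$ maps each of $X$ and $Y$ to itself.
\item[] $\gamma$ also maps each of $X$ and $Y$ to itself, since $110\C, 111\C \subseteq Y$ and $0110\C, 0111\C \subseteq X$.
\item[] $\beta$ swaps $X$ and $Y$.
\end{itemize}
So $\<\alpha, \beta, \gamma\>$ preserves the partition $\{X, Y\}$ of $\C$. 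It is therefore a proper subgroup of $V$; for instance, $(000\ 001) \in V$ sends $X$ to $001\C \sqcup 011\C \sqcup 10\C$, which is neither $X$ nor $Y$. In particular, no argument of the form ``$\Sym(B)$, then $\Sym(A)$'' can succeed for this triple.

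In Example~\ref{ex:involutions}, it is exactly the factor $(10\ 11)$ in $(001\ 010)(10\ 11)$ that breaks this block system. That element already has small support, so there was no need to drop the factor. Even with it restored, you would still owe a full generation argument: the extra swap $(0110\ 0111)$ in $\gamma$ means you cannot simply quote Example~\ref{ex:involutions}.

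The idea you are missing is to reuse the $(2,3)$-generation result instead of redoing a basis computation. By Corollary~\ref{cor:torsion_v}, there is an involution $\alpha \in V$ with $\Supp(\alpha) \subsetneq \C$ and $\<\alpha, (01\ 10\ 11)\> = V$. So it is enough to write the $3$-cycle $(01\ 10\ 11) = (010\ 100\ 110)(011\ 101\ 111)$ as a product $\beta\gamma$ of two fixed-point-free involutions on the eight leaves of depth three. The paper takes
$\beta = (000\ 001)(010\ 111)(100\ 101)(110\ 011)$ and $\gamma = (000\ 001)(010\ 011)(100\ 111)(110\ 101)$.
Both have support $\C$, and with right actions one checks $\beta\gamma = (01\ 10\ 11)$. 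Then $\<\alpha,\beta,\gamma\> \geq \<\alpha, \beta\gamma\> = V$, with no bookkeeping at all.
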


\begin{proof}
By Corollary~\ref{cor:torsion_v}, there exists an involution $\alpha \in V$ such that $\Supp(\alpha) \subsetneq \C$ and $\< \alpha, (01 \ 10 \ 11) \> = V$.
Therefore, it suffices to find involutions $\beta, \gamma \in V$ such that $\Supp(\beta) = \Supp(\gamma) = \C$ and $\beta\gamma = (01 \ 10 \ 11)$. 
One such choice is
\begin{gather*}
\beta  = (000 \ 001)(010 \ 111)(100 \ 101)(110 \ 011) \\
\gamma = (000 \ 001)(010 \ 011)(100 \ 111)(110 \ 101). \qedhere
\end{gather*}
\end{proof}

\begin{proof}[Proof of Proposition~\ref{prop:spread_two_swaps}]
To begin with, fix $\rho \in V$ such that $|\rho| = 7$ and $\Supp(\rho) = \C$.
The proof proceeds in two steps.
First, we will define involutions $\beta, \gamma \in V$ such that $\Supp(\beta) \sqcup \Supp(\beta^\gamma) = \C$ and $\rho^\gamma = \rho^{-1}$.
Second, we will prove that $\< \beta, \rho \> = V$. 
Since $\< \beta^\gamma, \rho \> = \< \beta^\gamma, \rho^{-1} \> = \< \beta^\gamma, \rho^\gamma \> = \< \beta, \rho \>^\gamma$, these steps also prove that $\< \beta^\gamma, \rho \> = V$.
Let us explain why these two steps prove the proposition.
By construction, $\beta$ and $\beta^\gamma$ are involutions such that $\Supp(\beta) \sqcup \Supp(\beta^\gamma) = \C$. 
Since $(00 \ 01)$ and $(10 \ 11)$ are also involutions such that $\Supp((00 \ 01)) \sqcup \Supp((10 \ 11)) =  \C$, there exists $\tau \in V$ such that $\beta^\tau = (00 \ 01)$ and $(\beta^\gamma)^\tau = (10 \ 11)$, and hence $\< (00 \ 01), \rho^\tau \> = \< (10 \ 11), \rho^\tau \> = V$, so taking $\sigma = \rho^\tau$ gives our desired result. 

As the first step, we define $\beta$ and $\gamma$.
To do this, we need to introduce a number of intermediate elements.
Fix a clopen subset $U \subseteq \C$ such that $\C = \bigsqcup_{i=0}^6 U\rho^i$. 
Since $V_U \cong V$, by Lemma~\ref{lem:involutions_v_support}, we may fix involutions $\alpha_0, \alpha_1, \alpha_2 \in V_U$ such that $\Supp(\alpha_0) \subsetneq \Supp(\alpha_1) = \Supp(\alpha_2) = U$ and $V_U = \< \alpha_0, \alpha_1, \alpha_2 \>$.
Write $U_0 = \Supp(\alpha_0)$ and $U_1 = U \setminus \Supp(\alpha_0)$. 
Let $\alpha_3 \in V$ be an involution such that $\Supp(\alpha_3) = U_0 \cup U_0\rho$ and $U_0\alpha_3 = U_0\rho$ (so, necessarily, $(U_0\rho)\alpha_3 = U_0$).
Let $\pi \in V$ be an involution such that $\Supp(\pi) = U$ and $U_0\pi = U_1$ (so, necessarily, $U_1\pi = U_0$).

Let $\gamma \in V$ be defined for all $0 \leq i \leq 6$ as 
\[
\gamma|_{U\rho^i} = (\pi\rho)^7\rho^{-2i}|_{U\rho^i},
\] 
noting that $\gamma$ is an involution satisfying $(U\rho^i)\gamma = U\rho^{-i}$ for all $i$.
In addition, for all $0 \leq i \leq 6$ and all $x \in U\rho^i$ we have
\[
x\rho^\gamma = x(\pi\rho)^7\rho^{-2i} \cdot \rho \cdot (\pi\rho)^7\rho^{-2(-i+1)} = x\rho^{-1},
\]
so $\rho^\gamma = \rho^{-1}$, as required.

Let $\beta$ be defined as the product
\[
\beta = \alpha_0\alpha_1^\rho\alpha_2^{\rho^2}\alpha_3^{\rho^3(\pi\rho)^7}.
\]
Observe that
\[
\Supp(\alpha_0) = U_0, \quad 
\Supp(\alpha_1^\rho) = U\rho, \quad 
\Supp(\alpha_2^{\rho^2}) = U\rho^2, \quad
\Supp(\alpha_3^{\rho^3(\pi\rho)^7}) = (U_1\rho^3 \cup U_1\rho^4).
\]
Since these four supports are disjoint, $\beta$ is a product of four commuting involutions and, hence, is an involution.
In addition,
\begin{align*}
\Supp(\beta) &= U_0 \cup U\rho^1 \cup U\rho^2 \cup U_1\rho^3 \cup U_1\rho^4 \\
\Supp(\beta^\gamma) &= U_1 \cup U\rho^6 \cup U\rho^5 \cup U_0\rho^4 \cup U_0\rho^3,
\end{align*}
so $\Supp(\beta) \sqcup \Supp(\beta^\gamma) = \C$, as required.

As the second step, we prove that $\< \beta, \rho \> = V$. 
Since $\C = \bigsqcup_{i=0}^6 U\rho^i$, we certainly have 
\[
\C = \bigcup_{i=0}^6 U\rho^i \cup \bigcup_{i=0}^6 U\beta^{\rho^{-3}}\rho^i.
\]
Since $U\beta^{\rho^{-3}} = U\alpha_3^{(\pi\rho)^7} = U_0 \cup U_1\rho$, adjacent sets in the following sequence
\[
U, \ U\beta^{\rho^{-3}}, \ U\rho, \ U\beta^{\rho^{-3}}\rho, \ \dots, \ U\rho^6, \ U\beta^{\rho^{-3}}\rho^6
\]
have nontrivial intersection.
Therefore, repeated application of Lemma~\ref{lem:union} implies that 
\[
\< V_U, V_U^{\beta^{\rho^{-3}}}, V_U^{\rho}, V_U^{\beta^{\rho^{-3}}\rho}, \dots, V_U^{\rho^6}, V_U^{\beta^{\rho^{-3}}\rho^6} \> = V.
\]
Therefore, it suffices to prove that $V_U \leq \< \beta, \rho \>$.

To do this, let us introduce a particular subgroup of $\< \beta, \gamma \>$, namely,
\[
G = \< \beta, \beta^{\rho^{-1}}, \beta^{\rho^{-2}} \>.
\]
It is easy to see that $G$ setwise stabilises both $U$ and $U\rho^{-1}$.
For brevity, let us write 
\[
W = U \cup U\rho^{-1}
\]
(as an aid to the reader, $W$ is a ``double $U$'').
We claim that $G''|_W = V_U$ (where $G''$ is the second derived subgroup of $G$).
To see this, first note that
\begin{align*}
G|_U &= \< \beta|_U, \beta^{\rho^{-1}}|_U, \beta^{\rho^{-2}}|_U \> = \< \alpha_0, \alpha_1, \alpha_2 \> = V_U \\
G|_{U\rho^{-1}} &= \< \beta|_{U\rho^{-1}}, \beta^{\rho^{-1}}|_{U\rho^{-1}}, \beta^{\rho^{-2}}|_{U\rho^{-1}} \> = \< \alpha_0^{\rho^{-1}}, \alpha_1^{\rho^{-1}} \>.
\end{align*}
In particular, $G|_U$ is simple, whereas $G|_{U^{\rho^{-1}}}$, being generated by two involutions, is a quotient of the infinite dihedral group and hence is metabelian. 
This means that
\[
G''|_U = V_U \quad\text{and}\quad G''|_{U\rho^{-1}} = 1,
\]
which proves the claim.

We next introduce a particular element of $\< \beta, \gamma \>$, namely, the commutator
\[
\delta = [\beta^{\rho^{-1}}, \beta^{\rho^3}].
\] 
We claim that $\Supp(\delta) \subseteq W$.
Since
\[
\beta^{\rho^{-1}} = \alpha_0^{\rho^{-1}}\alpha_1          \alpha_2^{\rho}  \alpha_3^{\rho^2(\pi\rho)^7}
\quad\text{and}\quad
\beta^{\rho^3}    = \alpha_0^{\rho^3}   \alpha_1^{\rho^4} \alpha_2^{\rho^5}\alpha_3^{\rho^{-1}(\pi\rho)^7}, 
\]
we calculate that
\[
\delta = [\alpha_1, \alpha_3^{\rho^{-1}(\pi\rho)^7}].
\]
In particular, $\Supp(\delta) \subseteq W$. 
We also claim that $\Supp(\delta) \cap U$ is nonempty.
To prove this, we will actually prove that $U_1\alpha_1^{-1} \subseteq \Supp(\delta)$.
To this end, let $x \in U_1\alpha_1^{-1}$. 
Then $x\alpha_1 \in U_1$, so $(x\alpha_1)\alpha_3^{\rho^{-1}(\pi\rho)^7} \in U_1\rho^{-1}$ and hence $(x\alpha_1)\alpha_3^{\rho^{-1}(\pi\rho)^7}$ is fixed by $\alpha_1$. 
This means that
\[
x\delta = (x\alpha_1\alpha_3^{\rho^{-1}(\pi\rho)^7})\alpha_1\alpha_3^{\rho^{-1}(\pi\rho)^7} = (x\alpha_1)\alpha_3^{\rho^{-1}(\pi\rho)^7}\alpha_3^{\rho^{-1}(\pi\rho)^7} = x\alpha_1.
\]
Since $\Supp(\alpha_1) = U$, we must have $x\alpha_1 \neq x$. 
Therefore, $x\delta \neq x$, so $x \in \Supp(\delta)$, as claimed.

Since $\Supp(\delta) \cap U \neq \emptyset$, we may fix $X \subseteq U$ such that $X \cap X\delta = \emptyset$.
Fix $\overline{\tau}_1, \overline{\tau}_2 \in V_U$ such that $\Supp(\overline{\tau}_1), \Supp(\overline{\tau}_2) \subseteq X$ and $[\overline{\tau}_1, \overline{\tau}_2] \neq 1$. 
Since $G''|_W = V_U$, we may fix $\tau_1, \tau_2 \in G''$ such that $\tau_1|_W = \overline{\tau}_1$ and $\tau_2|_W = \overline{\tau}_2$. 

We now introduce a final element of $\< \beta, \gamma \>$, namely,
\[
\epsilon = [[\delta, \tau_1], \tau_2].
\]
We claim that $\epsilon$ is nontrivial and $\Supp(\epsilon) \subseteq U$.

We begin by considering $[\delta, \tau_1]$. 
Note that $[\delta, \tau_1]$ acts trivially on $\C \setminus W$ since $\delta$ acts trivially on $\C \setminus W$ and $\tau_1$ (being an element of $G''$) setwise stabilises $\C \setminus W$.
Since $[\delta, \tau_1] =  (\tau_1^{-1})^\delta\tau_1$ and $\Supp([\delta, \tau_1]) \subseteq W$, we have $[\delta, \tau_1] = (\overline{\tau}_1^{-1})^\delta\overline{\tau}_1$. 

We now consider $[[\delta, \tau_1], \tau_2]$.
Note that $[[\delta, \tau_1], \tau_2]$ acts trivially on $\C \setminus W$ since $[\delta, \tau_1]$ acts trivially on $\C \setminus W$ (as noted above) and $\tau_2$ (being an element of $G''$) setwise stabilises $\C \setminus W$.
Similarly, $[[\delta, \tau_1], \tau_2]$ acts trivially on $W \setminus X$ since $[\delta, \tau_1] = (\overline{\tau}_1^{-1})^\delta\overline{\tau}_1$ setwise stabilises $W \setminus X$ (as $\Supp(\overline{\tau}_1) \subseteq X$ and $\Supp((\overline{\tau}_1^{-1})^\delta) \subseteq W \setminus X$) and $\tau_2$ acts trivially on $W \setminus X$. 
Therefore,
\[
\Supp(\epsilon) = \Supp([[\delta, \tau_1], \tau_2]) \subseteq X \subseteq U,
\] 
as claimed.
Moreover, 
\[
\epsilon = [[\delta, \tau_1], \tau_2] = [[\delta, \tau_1], \tau_2]|_X = [[\delta, \tau_1]|_X, \tau_2|_X] = [\overline{\tau}_1, \overline{\tau}_2],
\] 
which is nontrivial, by construction. 
This completes the proof of the claim.

As $\epsilon$ is a nontrivial element of $V_U$ and $G''|_U = V_U$, we see that 
\[
\< \epsilon^{G''} \> = \< \epsilon^{V_U} \> = V_U,
\]
since $V_U$ is simple.
Because $\epsilon \in \< \beta, \rho \>$ and $G'' \leq \< \beta, \rho \>$, we conclude that $V_U \leq \< \beta, \rho \>$, as required.
This completes the proof.
\end{proof}

\subsection{Upper bounds}
\label{ss:spread_upper}

We conclude by making some general observations about the uniform spread of infinite groups, with a particular focus on simple vigorous groups. 
The definition of uniform spread originated in the study of finite groups, and for infinite groups, it is useful to make a distinction between two different phenomena which both correspond to $u(G) = \infty$.

\begin{definition}
\label{def:infinite_uniform_spread}
Let $G$ be a group.
\begin{enumerate}
\item The group $G$ has \emph{weak infinite uniform spread} if for every nonnegative integer $k$ there exists $\sigma \in G$ such that for any nontrivial $\alpha_1, \dots, \alpha_k \in G$, there exists $\beta \in \sigma^G$ such that $\< \alpha_1, \beta \> = \dots = \< \alpha_k, \beta \>$
\item The group $G$ has \emph{strong infinite uniform spread} if there exists $\sigma \in G$ such that for any finite collection of nontrivial elements $\alpha_1, \dots, \alpha_k \in G$, there exists $\beta \in \sigma^G$ such that $\< \alpha_1, \beta \> = \dots = \< \alpha_k, \beta \>$.
\end{enumerate}
\end{definition}

Our key observation is the following.

\begin{proposition} 
\label{prop:strong_infinite}
Let $\Omega$ be a set and let $G \leq \Sym(\Omega)$. 
Assume that all of the following hold
\begin{enumerate}
\item every element of $G$ setwise stabilises a nonempty finite subset of $\Omega$
\item $G$ does not setwise stabilise any nonempty finite subset of $\Omega$
\item for every positive integer $k$ there exist nontrivial $x_1, \dots, x_k \in G$ with pairwise disjoint supports.
\end{enumerate}
Then $G$ does not have strong infinite uniform spread.
\end{proposition}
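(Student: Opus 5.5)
Suppose, for a contradiction, that $\sigma \in G$ witnesses strong infinite uniform spread. The idea is to choose finitely many nontrivial elements $\alpha_1, \dots, \alpha_k$, depending on $\sigma$, so that every conjugate $\beta = \sigma^\theta$ shares a nonempty finite invariant set with at least one $\alpha_i$. Then $\< \alpha_i, \beta \>$ setwise stabilises that finite set, so by (ii) it is a proper subgroup of $G$. (Note that the definition as printed omits the final ``$= G$''; I will read it with this equality, as intended.)

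\textbf{Step 1: a finite invariant set for $\sigma$.} By (i), fix a nonempty finite set $F \subseteq \Omega$ with $F\sigma = F$, and write $f = |F|$. For every $\theta \in G$ the conjugate $\beta = \sigma^\theta$ stabilises $F\theta$, which is again a set of size $f$. So every conjugate of $\sigma$ stabilises some $f$-element subset of $\Omega$, although we have no control over where that subset lies.

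\textbf{Step 2: choosing the $\alpha_i$.} By (iii), take nontrivial $x_1, \dots, x_k \in G$ with pairwise disjoint supports, where $k = f+1$. Fix $\beta = \sigma^\theta$ and set $F' = F\theta$. Since the supports are disjoint and $|F'| = f < k$, at least one index $i$ has $\Supp(x_i) \cap F' = \emptyset$. Then $x_i$ fixes $F'$ pointwise, so $\< x_i, \beta \>$ stabilises the nonempty finite set $F'$. By (ii), $\< x_i, \beta \> \neq G$. As $\theta$ was arbitrary, the finite collection $\alpha_i = x_i$ defeats $\sigma$. Because $\sigma$ was arbitrary, $G$ does not have strong infinite uniform spread.

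\textbf{Where the difficulty lies.} Each step is short. The subtle point is the order of quantifiers. The number $k = |F|+1$ depends on $\sigma$, and strong infinite uniform spread must handle every finite collection for a single fixed $\sigma$, so we are allowed to choose $k$ after $\sigma$. This is exactly why the argument does not rule out weak infinite uniform spread, where $\sigma$ may be chosen after $k$. The main care needed is to note that disjointness of the supports is what gives the pigeonhole count, and that $F$ must be finite so that $k$ is finite.
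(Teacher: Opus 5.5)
Your proposal is correct and matches the paper's proof: fix a finite $\sigma$-invariant set $F$, take $|F|+1$ nontrivial elements with pairwise disjoint supports, and use pigeonhole to get an $x_i$ fixing $F\theta$ pointwise, so that $\langle x_i, \sigma^\theta\rangle$ stabilises $F\theta$ and is proper by (ii). You are also right that the printed definition omits the final ``$= G$'' and that the intended reading includes it.
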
 

\begin{proof}
Let $s \in G$. 
By (i), $s$ stabilises a finite subset $\Delta$ of $\Omega$, of size $m$, say. 
By (iii), we may fix nontrivial $x_1, \dots, x_{m+1} \in G$ with pairwise disjoint supports. 
Let $g \in G$. 
Since $|\Delta| \leq m$, we may fix $1 \leq i \leq m+1$ such that the support of $x_i$ is disjoint from $\Delta^g$. 
Then $\<x_i, s^g\>$ setwise stabilises $\Delta^g$. 
By (ii), $G$ does not setwise stabilise any finite set, so $\<x_i,s^g\> \neq G$. 
This proves that there does not exist $g \in G$ such that for all $1 \leq i \leq m+1$ we have $\<x_i,s^g\> = G$. 
Since $s$ was arbitrary, we have shown that $G$ does not have strong infinite uniform spread.
\end{proof}

\begin{corollary} 
\label{cor:strong_infinite}
Let $G \leq \Homeo(\C)$ be a finitely generated simple vigorous group. 
Assume that every element of $G$ has a finite orbit.
Then $G$ does not have strong infinite uniform spread.
\end{corollary}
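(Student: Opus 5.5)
The plan is to apply Proposition~\ref{prop:strong_infinite} with $\Omega = \C$ and $G \leq \Sym(\C)$ the given group of homeomorphisms. This means checking its three hypotheses (i)--(iii) for a finitely generated simple vigorous group in which every element has a finite orbit.

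Hypothesis (i) is exactly the assumption. If $\gamma \in G$ has a finite orbit $x\<\gamma\>$, then this orbit is a nonempty finite subset of $\C$ that $\gamma$ stabilises setwise.

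Hypothesis (iii) follows from vigour. Given $k$, partition $\C$ into $k+1$ nonempty clopen sets $A_1, \dots, A_{k+1}$. For each $i \leq k$, choose proper nonempty clopen subsets $B_i, C_i$ of $A_i$ with $B_i \cap C_i = \emptyset$. Vigour gives $x_i \in G_{A_i}$ with $B_i x_i \subseteq C_i$, so $x_i$ is nontrivial. Each $x_i$ has support inside $A_i$, so the supports are pairwise disjoint.

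Hypothesis (ii) is the step needing the most care, though it is still short. Suppose $G$ setwise stabilises a nonempty finite set $F \subseteq \C$, and pick $x \in F$. Cantor space has no isolated points, so we can choose a clopen neighbourhood $A$ of $x$ with $A \neq \C$ and $F \cap A = \{x\}$, and shrink it further so that $A \setminus \{x\}$ contains a nonempty proper clopen subset. Take a proper clopen $B \subsetneq A$ containing $x$, and a nonempty proper clopen $C \subsetneq A$ not containing $x$. Vigour gives $\gamma \in G_A$ with $B\gamma \subseteq C$. Then $x\gamma \in C \subseteq A \setminus \{x\}$. But $\gamma$ fixes $\C \setminus A$ pointwise and stabilises $F$, so it must map $F \cap A = \{x\}$ to itself. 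This is a contradiction.

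With (i)--(iii) verified, Proposition~\ref{prop:strong_infinite} gives the conclusion. Simplicity and finite generation are not actually needed beyond the standing setting.
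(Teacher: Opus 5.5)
Your proof is correct and has the same overall structure as the paper's: take $\Omega = \C$ in Proposition~\ref{prop:strong_infinite}, note that (i) is exactly the hypothesis, and verify (ii) and (iii). The differences are in how you verify (ii) and (iii).

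\textbf{Condition (iii).} The paper writes $\C = \bigsqcup_{i=1}^k (U\gamma_{i1} \sqcup U\gamma_{i2} \sqcup U\gamma_{i3}) \sqcup V$. It then uses Lemma~\ref{lem:existence} to produce the disjoint $3$-cycles $(U\gamma_{i1}\ U\gamma_{i2}\ U\gamma_{i3})$. This needs approximate fullness, which the paper gets from simplicity via \cite[Theorem~4.18]{BleakElliottHyde24}. You instead apply the definition of vigour directly inside $k$ disjoint clopen sets. That is more elementary and, as you observe, shows that simplicity and finite generation play no role. Your argument proves that any vigorous $G \leq \Homeo(\C)$ in which every element has a finite orbit fails to have strong infinite uniform spread. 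The paper's version yields elements of a prescribed finite order, but that extra structure is not needed here.

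\textbf{Condition (ii).} The paper's argument is slightly shorter. It takes a proper clopen $C \supseteq \Delta$ and uses vigour with $A = \C$ to obtain $\gamma$ with $C\gamma \subseteq \C \setminus C$. This gives the stronger conclusion $\Delta\gamma \cap \Delta = \emptyset$. Your version isolates a single point $x \in F$ in a clopen neighbourhood $A$ and moves it with an element of $G_A$. It is equally valid and uses only the local form of vigour. Your ``shrink it further'' step is automatic: every nonempty clopen subset of $\C$ is infinite, so $A \setminus \{x\}$ always contains a nonempty clopen set.
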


\begin{proof}
By Proposition~\ref{prop:strong_infinite}, it suffices to verify that $G$ satisfies conditions (ii) and (iii) of Proposition~\ref{prop:strong_infinite} where $\Omega = \C$. 

For (ii), let $\emptyset \subsetneq \Delta \subseteq \C$ be finite. 
Let $C$ be a proper clopen subset of $\C$ containing $\Delta$. 
Since $G$ is vigorous, there exists $\gamma \in G$ such that $C\gamma \subseteq \C \setminus C$. 
Then $\Delta \gamma$ is disjoint from $\Delta$, so $G$ does not setwise stabilise $\Delta$. 

For (iii), let $k$ be a positive integer.
As usual, we may fix clopen sets $\emptyset \subsetneq U, V \subsetneq \C$ and elements $\gamma_{11}, \gamma_{12}, \gamma_{13}, \dots, \gamma_{k1}, \gamma_{k2}, \gamma_{k3}$ such that $\C = \bigsqcup_{i=1}^k (U\gamma_{i1} \sqcup U\gamma_{i2} \sqcup U\gamma_{i3}) \sqcup V$. 
Since $G$ is a simple vigorous group, it is vigorous and approximately full, so by Lemma~\ref{lem:existence}, for each $1 \leq i \leq k$ there exists $\delta_i \in G$ that enacts the permutation $(U\gamma_{i1} \ U\gamma_{i2} \ U\gamma_{i3})$. 
The elements $\delta_1, \dots, \delta_k$ have pairwise disjoint support.
\end{proof}

\begin{corollary} 
\label{cor:strong_infinite_v}
Thompson's group $V$ does not have strong infinite uniform spread.
\end{corollary}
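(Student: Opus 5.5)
The plan is to apply Corollary~\ref{cor:strong_infinite} with $G = V$. Thompson's group $V$ is a finitely generated simple vigorous group, as recalled in the introduction. So the only hypothesis left to check is that every element of $V$ has a finite orbit on $\C$.

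For this I will use the basis-pair description of elements of $V$. Let $\gamma \in V$ have basis pair $g\colon A \to B$. It suffices to find a point $x \in \C$ with $x\gamma^k = x$ for some $k \geq 1$.

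\textbf{Torsion case.} If $\gamma$ has finite order, then every orbit is finite and there is nothing to prove.

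\textbf{General case.} The standard dynamics of elements of $V$ (as in Brin's or Salazar-D\'iaz's revealing pairs) shows that every element has a periodic point: either a periodic leaf cycle or an attracting/repelling fixed point of some power. I would argue this directly as follows.

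\emph{Step 1: reduce to $A = B$ as sets.} Replace $(A, g, B)$ by a basis pair with the same domain and codomain. To do this, expand both $A$ and $B$ to a common refinement, choosing the expansions compatibly with $g$ via the usual iterative expansion process.

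\emph{Step 2: the case $A = B$.} Then $\gamma$ permutes the cones $u\C$ for $u \in A$, acting as a prefix replacement. Hence $\gamma$ has finite order, which is the torsion case.

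\emph{Step 3: the general case.} Consider the induced map on the finite directed graph with vertex set $A \cup B$, where $u \in A$ maps to $ug \in B$. If $ug$ is a prefix of $u$, then $u\C\gamma \supseteq u\C$. Iterating shows there is a word $v$ with $v\C\gamma^k \supseteq v\C$ or $v\C\gamma^k \subseteq v\C$, and the map $v\C \to v\C\gamma^k$ is a prefix replacement $vw \mapsto v'w$ with $v$ a prefix of $v'$ (or the reverse). Such a map fixes the point $v\bar{z}$, where $z$ is the word with $v' = vz$ (or $v = v'z$ in the reverse case). This gives a periodic point of $\gamma$.

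A pigeonhole argument on the finitely many leaves guarantees that some leaf returns to a comparable leaf. Start at any leaf and follow $\gamma$: each image of a cone either lies inside a cone of $A$ or is a union of cones of $A$. So at each step we can pass to a cone of $A$ comparable with the current one. Since $A$ is finite, some cone of $A$ recurs, which yields $v$ and $k$ with $v\C\gamma^k$ comparable with $v\C$. This repeated-leaf argument is the main obstacle, and the step to write carefully.

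Given this, every element of $V$ has a finite orbit, and Corollary~\ref{cor:strong_infinite} finishes the proof.
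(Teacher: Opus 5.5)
Your reduction to Corollary~\ref{cor:strong_infinite} is the same as the paper's. The paper then discharges the finite-orbit hypothesis simply by citing Brin's revealing pairs, which you mention in passing. The self-contained argument you actually commit to, however, has a genuine gap.

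Step~1 cannot be carried out. An element of $V$ admits a basis pair $(A,g,A)$ if and only if it has finite order, so for an infinite-order element no compatible expansion yields $A=B$. As written, Steps~1 and~2 together would make $V$ torsion.

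More seriously, the pigeonhole in Step~3 does not deliver the claimed $v$ and $k$. A leaf of $A$ recurring along your chain says nothing about whether the actual cones are comparable. Moreover, passing to a whole leaf destroys the claim that $\gamma^k$ acts on $v\C$ as a single prefix replacement. For example, let $\gamma$ have basis pair $0\mapsto 1$, $10\mapsto 01$, $11\mapsto 00$. From the leaf $0$, the image $1\C$ contains the leaf $10$, so you restrict to $00\C\gamma = 10\C$. Then $10\C\gamma = 01\C$ lies in the leaf $0$ again, so the leaf recurs. But $00\C\gamma^2 = 01\C$ is disjoint from $00\C$. And although $0\C\gamma^2 = 0\C$ as a set, $\gamma^2|_{0\C}$ swaps $00\C$ and $01\C$, so it is not of the form $vw\mapsto v'w$ and has no fixed point.

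What is missing is a rule for choosing the recurrence times so that the intermediate steps do not depend on the particular subcone. Follow a single point: write $x\gamma^t = a_tw_t$ with $a_t\in A$. Each step either prepends a word $z_t$ to $w_t$ (if $a_tg = a_{t+1}z_t$) or strips one (if $a_{t+1} = (a_tg)z_t$). Let $h_t$ be the running net length change.

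If $(h_t)$ is bounded below, there are infinitely many times $i$ with $h_t\geq h_i$ for all $t\geq i$. Take two such times $i<j$ with $a_i=a_j=a$. No strip between them reaches into $w_i$, so the same transitions apply to every point of $a\C$. Hence $\gamma^{j-i}$ is $aw\mapsto asw$ for a fixed word $s$, which fixes $a\overline{s}$ (or all of $a\C$ if $s$ is empty).

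If $(h_t)$ is unbounded below, use two record-low times with the same leaf instead. The same reasoning gives $\gamma^{j-i}\colon asw\mapsto aw$ on $as\C$, which again fixes $a\overline{s}$.

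With this refined pigeonhole, or simply the revealing-pair citation as in the paper, the proof goes through.
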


\begin{proof}
By Corollary~\ref{cor:strong_infinite}, it suffices to verify that every element of $V$ has a finite orbit on $\C$, and this follows immediately from the existence of revealing pairs, as introduced by Brin in \cite[Section~10]{Brin04} (see \cite[Section~4]{BBLGHMS13} for a detailed explanation). 
The main idea is as follows. 
If $g$ has finite order, then every orbit of $g$ is finite.
Now assume that $g$ has infinite order.
Since $g$ has a revealing pair, in particular, $g$ admits a tree pair with an attractor and a domain of attraction. 
This ensures that there exists nonempty words $u,v \in \{0,1\}^*$ and a positive integer $m$ such that $(u\C)g^m= uv\C$, from which it follows that $u\overline{v}g^m = u\overline{v}$, so $\{ u\overline{v}, u\overline{v}g, \dots, u\overline{v}g^{m-1} \}$ is stabilised by $g$ (here $\overline{v}$ denotes the infinite string obtained by repeating $v$).
\end{proof}

The following example demonstrates that there do exist finitely generated simple vigorous groups that contain an element with no finite orbits.

\begin{example}
\label{ex:strong_infinite}
Let $\alpha \in \Homeo(\C)$ be the \emph{odometer}, which is the self-similar transformation defined for all $x \in \C$ as 
\[
(0x)\alpha= 1x \quad \text{and} \quad (1x)\alpha = 0(x\alpha),
\]
and let $A$ be the infinite cyclic group generated by $\alpha$ (see \cite[Example~2.3]{Nekrashevych04} where $\alpha$ is referred to as the \emph{adding machine}).
Consider the R\"over--Nekrashevych group $V_2(A)$ associated to $A$ (see \cite[Definition~9.7]{Nekrashevych04}).

Since $V$ acts transitively on the set $\{ u\C \mid u \in \{0,1\}^* \}$, as a subgroup of $\Homeo(\C)$, the group $V_2(A)$ is just $\< V, \alpha_{[0]} \>$ (using the notation introduced in \eqref{eq:restriction}).
However, $\alpha_{[0]} = (0 \ 1)\alpha$ since for all $x \in \C$ we have
\[
(0x)((0 \ 1)\alpha) = (1x) \alpha = 0(x\alpha) = (0x)\alpha_{[0]}  
\quad \text{and} \quad 
(1x)((0 \ 1)\alpha) = (0x) \alpha = 1x = (1x)\alpha_{[0]}.
\]
Therefore, $V_2(A) = \< V, \alpha_{[0]} \> = \< V, \alpha \>$.

Let $G$ be the derived subgroup of $V_2(A)$, and let $\gamma$ be the commutator $[(0 \ 1), \alpha_{[0]}] \in G$.
We claim that $G$ is a finitely generated simple vigorous group, and $\gamma$ has no finite orbits.

We begin by explaining why $\gamma$ has no finite orbits. 
First observe that $\overline{1}\alpha = \overline{0}$ while $\alpha$ restricts to a map $\C \setminus \{ \overline{1} \} \to \C \setminus \{ \overline{0} \}$ defined by the infinite set of prefix substitutions
\[
0x \mapsto 1x, \quad 10x \mapsto 01x, \quad 110x \mapsto 001x, \quad \dots \quad 1^n0x \mapsto 0^n1x, \quad \dots
\]
In particular, $\alpha$ has no finite orbits.
Now observe that
\[
\gamma = [(0 \ 1), \alpha_{[0]}] = (0 \ 1)^{-1} \alpha_{[0]}^{-1} (0 \ 1) \alpha_{[0]} = \alpha_{[1]}^{-1} \alpha_{[0]} = \alpha_{[0]} \alpha_{[1]}^{-1}.
\]
Put informally, $\gamma$ is the odometer on $0\C$ and the inverse-odometer on $1\C$.
In particular, $\gamma$ has no finite orbits.

We now turn to justifying our claims about $G$.
As $V$ is perfect, $V \leq G$, so $G$ is vigorous since $V$ is.
Moreover, by  \cite[Theorem~9.11]{Nekrashevych04}, all proper quotients of $V_2(A)$ are abelian, so $G$, being the derived subgroup of $V_2(A)$, is simple.
Finally, we claim that $G = \< V, \gamma \>$, which establishes that $G$ is finitely generated since $V$ is.

To prove this final claim, we need some groundwork.
Since $V$ acts by prefix substitutions on all points of $\C$, and $\alpha$ and $\alpha^{-1}$ each act by prefix substitutions on all but one point of $\C$, any element $\sigma \in V_2(A)$ acts by prefix substitutions on all but finitely many points on $\C$. 
Moreover, by inspecting the action of $\alpha$ and $\alpha^{-1}$, we see that if $\sigma \in V_2(A)$ does not act as a prefix substitution at $x \in \C$, then $x$ has an infinite tail that is $\overline{0}$ or $\overline{1}$.
For $\sigma \in V_2(A)$, let $\Phi_0(\sigma)$ (respectively, $\Phi_1(\sigma)$) be the set of points of $\C$ with an infinite tail $\overline{0}$ (respectively, $\overline{1}$) on which $\sigma$ does not act as a prefix substitution.
Let $\phi \: V_2(A) \to \Z$ be defined, for all $\sigma \in V_2(A)$, as 
\[
\sigma\phi = |\Phi_1(\sigma)| - |\Phi_0(\sigma)|.
\]
It is easy to check that $\phi$ is a homomorphism by considering the action $V_2(A)$ on $\C$.
Since $G$ is nonabelian simple, we must have $G \leq \ker\phi$, so $|\Phi_0(\sigma)| = |\Phi_1(\sigma)|$ for all $\sigma \in G$.

We are now in a position to prove that $G = \< V, \gamma \>$.
Let $\sigma \in G$. 
Since $|\Phi_0(\sigma)| = |\Phi_1(\sigma)|$, we can choose $x_1, \dots, x_k, y_1, \dots, y_k \in \{ 0,1 \}^*$ such that
\[
\Phi_0(\sigma) = \{ x_1 \overline{0}, \dots, x_k \overline{0} \} 
\quad \text{and} \quad 
\Phi_1(\sigma) = \{ y_1 \overline{1}, \dots, y_k \overline{1} \},
\]
For each nonnegative integer $i$, let $u_i$ be the binary expression for $i$ written in reverse and let $v_i$ be the complement of $u_i$, so 
\[
(u_i)_{i \geq 0} = (0, 1, 01, 11, 001, 101, \dots) 
\quad \text{and} \quad 
(v_i)_{i \geq 0} = (1, 0, 10, 00, 110, 010, \dots).
\]
Then
\[
\Phi_0(\gamma^k) = \{ 1u_0\overline{0}, 1u_1\overline{0}, \dots, 1u_{k-1}\overline{0} \}
\quad \text{and} \quad 
\Phi_1(\gamma^k) = \{ 0v_0\overline{1}, 0v_1\overline{1}, \dots, 0v_{k-1}\overline{1} \}.
\]
Fix $\delta \in V$ such that for all $1 \leq i \leq k$ we have
\[
(1 u_{i-1} \overline{0}) \delta = x_i \overline{0}
\quad \text{and} \quad 
(0 v_{i-1} \overline{1}) \delta =  y_i \overline{1}
\]
By construction, for all $x \in \C$, if $\sigma$ acts as a prefix substitution on $x$, then so does $(\gamma^k)^\delta$, and otherwise $\sigma((\gamma^k)^\delta)^{-1}$ acts as a prefix substitution on $x$.
This proves that $\sigma((\gamma^k)^\delta)^{-1} \in V$.
Therefore, $\sigma \in \< V, \gamma \>$, as required.

Alternatively, we could have defined $G$ as $\< V, \gamma \>$, noted that $G = \< V, [(0 \ 1), \alpha_{[00]}] \>$ and applied \cite[Proposition 5.5]{BleakElliottHyde24} to deduce that $G$ is a finitely generated simple vigorous group since $V$ is.
\end{example}

In light of Proposition~\ref{prop:strong_infinite}, we conclude with the following provocative question that refines Question~\ref{que:spread} from the introduction.

\begin{question} 
\label{que:spread_refined}
Let $G \leq \Homeo(\C)$ be a finitely generated simple vigorous group. 
\begin{enumerate}
\item Does $G$ have weak infinite uniform spread if every element has a finite orbit?
\item Does $G$ have strong infinite uniform spread if there is an element with no finite orbit?
\end{enumerate}
\end{question}


\begin{acknowledgements*}
The authors thank Jim Belk for helpful conversations and Colva Roney-Dougal for asking a question that initially prompted Theorem~\ref{thm:products}.
The third author held an EPSRC Postdoctoral Fellowship (EP/X011879/2). 
The work in this paper emerged from the Focussed Research Workshop \emph{Generating Thompson Groups} sponsored by the Heilbronn Institute for Mathematical Research. 
An initial draft of this paper was given to Claude (Opus~4.8): it identified a few typographical errors and some minor mathematical corrections, and we have implemented these in this version.
In order to meet institutional and research funder open access requirements, any accepted manuscript arising shall be open access under a Creative Commons Attribution (CC BY) reuse licence with zero embargo.
No new data were created.
\end{acknowledgements*}

\vspace{2pt}

\begin{multicols}{2}
\noindent Collin Bleak \newline
School of Mathematics and Statistics \newline
University of St Andrews \newline
St Andrews, KY16 9SS, UK \newline
\texttt{collin.bleak@st-andrews.ac.uk}
\vspace{5pt}

\noindent Casey Donoven \newline
College of Arts, Sciences \& Education \newline
Montana State University-Northern \newline
Havre, Montana 59501, USA \newline
\texttt{casey.donoven@msun.edu}

\noindent Scott Harper \newline
School of Mathematics \newline
University of Birmingham \newline
Birmingham, B15 2TT, UK \newline
\texttt{s.harper.3@bham.ac.uk}
\vspace{5pt}

\noindent James Hyde \newline
Department of Mathematics and Statistics \newline
Binghamton University \newline
Binghamton, New York 13902, USA \newline
\texttt{jhyde1@binghamton.edu}
\end{multicols}

\begin{thebibliography}{99}

\bibitem{AschbacherGuralnick84}
M. Aschbacher and R. Guralnick, \emph{Some applications of the first cohomology group}, J. Algebra \textbf{90} (1984), 446--460.

\bibitem{BelkZaremsky22}
J. Belk and M. C. B. Zaremsky, \emph{Twisted Brin--Thompson groups}, Geom. Topol. \textbf{26} (2022), 1189--1223.

\bibitem{BBLGHMS13}
C. Bleak, H. Bowman, A. G. Lynch, G. Graham, J. Hughes, F. Matucci and E. Sapir, \emph{Centralizers in the R. Thompson group $V_n$}, Groups Geom. Dyn. \textbf{7} (2013), 821--865.

\bibitem{BleakElliottHyde24} 
C. Bleak, L. Elliott and J. Hyde, \emph{Sufficient conditions for a group of homeomorphisms of the Cantor set to be two-generated}, J. Inst. Math. Jussieu \textbf{23} (2024), 2825--2858.

\bibitem{BleakQuick17}
C. Bleak and M. Quick, \emph{The infinite simple group $V$ of Richard J. Thompson: presentations by permutations}, Groups Geom. Dyn. \textbf{11} (2017), 1401--1436.

\bibitem{BooneHigman74}
W. W. Boone and G. Higman, \emph{An algebraic characterization of groups with soluble word problem}, J. Austral. Math. Soc. \textbf{18} (1974), 41--53.

\bibitem{Bridson98}
M. R. Bridson, \emph{Controlled embeddings into groups that have no non-trivial finite quotients}, in \emph{The Epstein birthday schrift}, Geom. Topol. Mongr., vol.~1, Geometry \& Topology Publications, 1998, 99--116.

\bibitem{Brin04}
M. G. Brin, \emph{Higher dimensional Thompson groups}, Geom. Dedicata \textbf{108} (2004), 163--192.

\bibitem{BrownGeoghegan85}
K. S. Brown and R. Geoghegan, \emph{Cohomology with free coefficients of the fundamental group of a graph of groups}, Comment. Math. Helv. \textbf{60} (1985), 31--45.

\bibitem{Burness19}
T. C. Burness, \emph{Simple groups, generation and probabilistic methods}, in \emph{Groups St Andrews 2017 in Birmingham}, London Math. Soc. Lecture Note Ser., vol.~455, Cambridge Univ. Press, 2019, 200--229.

\bibitem{BurnessGuralnickHarper21}
T. C. Burness, R. M. Guralnick and S. Harper, \emph{The spread of a finite group}, Ann. of Math. \textbf{193} (2021), 619--687.

\bibitem{DonovenHarper20}
C. Donoven and S. Harper, \emph{Infinite $\frac{3}{2}$-generated groups}, Bull. Lond. Math. Soc. \textbf{52} (2020), 657--673.

\bibitem{GhysSergiescu87}
{\'E}. Ghys and V. Sergiescu, \emph{Sur un groupe remarquable de diff\'eomorphismes du cercle}, Comment. Math. Helv. \textbf{62} (1987), 185--239.

\bibitem{Goryushkin74}
A. P. Goryushkin, \emph{Imbedding of countable groups in $2$-generated simple groups}, Mathematical Notes of the Academy of Sciences of the USSR \textbf{16} (1974), 725--727.

\bibitem{GuralnickKantor00} 
R. M. Guralnick and W. M. Kantor, \emph{Probabilistic generation of finite simple groups}, J. Algebra \textbf{234} (2000), 743--792.

\bibitem{Hall74}
P. Hall, \emph{On the embedding of a group in a join of given groups}, J. Austral. Math. Soc. \textbf{17} (1974), 434--495.

\bibitem{Harper23}
S. Harper, \emph{The maximal size of a minimal generating set}, Forum Math. Sigma \textbf{11} (2023), e70 1--10.

\bibitem{Harper24Survey}
S. Harper, \emph{The spread of finite and infinite groups}, in \emph{Groups St Andrews 2022 in Newcastle}, London Math. Soc. Lecture Note Series, vol.~496, Cambridge Univ. Press, 2024, 74--117.

\bibitem{LiebeckShalev96}
M. W. Liebeck and A. Shalev, \emph{Classical groups, probabilistic methods, and the $(2,3)$-generation problem}, Ann. of Math. \text{144} (1996), 77--125.

\bibitem{LubeckMalle99}
F. L\"ubeck and G. Malle, \emph{$(2,3)$-generation of exceptional groups}, J. Lond. Math. Soc. \textbf{59} (1999), 109--122.

\bibitem{MalleSaxlWeigel94}
G. Malle, J. Saxl and T. Weigel, \emph{Generation of classical groups}, Geom. Dedicata \textbf{49} (1994), 85--116.

\bibitem{Matui12}
H. Matui, \emph{Homology and topological full groups of \'etale groupoids on totally disconnected spaces}, Proc. London Math. Soc. \textbf{104} (2012), 27--56.

\bibitem{McKenzieThompson73}
R. McKenzie and R. J. Thompson, \emph{An elementary construction of unsolvable word problems in group theory}, in \emph{Word problems: decision problems and the Burnside problem in group theory (Conf., Univ. California, Irvine, Calif. 1969)}, Studies in Logic and the Foundations of Math., vol.~71, North-Holland, 1973, 457--478.

\bibitem{Miller01}
G. A. Miller, \emph{On the groups generated by two operators}, Bull. Amer. Math. Soc. \textbf{7} (1901), 424--426.

\bibitem{Nekrashevych04}
V. V. Nekrashevych, \emph{Cuntz-Pimsner algebras of group actions}, J. Operator Theory \textbf{52} (2004), 223--249.

\bibitem{Sapir}
M. Sapir, personal written communication with Bleak (2017) upon reviewing \cite{BleakQuick17}.

\bibitem{ScheslerSkipperWu}
E. Schesler, R. Skipper and X. Wu, \emph{The Higman--Thompson groups $V_n$ are $(2,2,2)$-generated}, preprint, \url{arxiv:2411.09069}, 2024.

\bibitem{Schupp76}
P. E. Schupp, \emph{Embeddings into simple groups}, J. Lond. Math. Soc. \textbf{13} (1976), 90--94.

\bibitem{Scott92}
E. A. Scott, \emph{A tour around finitely presented infinite simple groups}, in \emph{Algorithms and Classification in Combinatorial Group Theory}, Mathematical Sciences Research Institute Publications, vol.~23, Springer--Verlag, 1992, 83--119.

\bibitem{Steinberg62}
R. Steinberg, \emph{Generators for simple groups}, Canadian J. Math. \textbf{14} (1962), 277--283.

\bibitem{Tarski75}
A. Tarski, \emph{An interpolation theorem for irredundant bases of closure structures}, Discrete Math. \textbf{12} (1975), 185--192.

\bibitem{ThompsonHandwritten}
R. J. Thompson, widely circulated handwritten notes  (1965), 1--11.

\bibitem{Thompson76}
R. J. Thompson, \emph{Embeddings into finitely generated simple groups which preserve the word problem}, in \emph{Word problems, {II} (Conf. on Decision Problems in Algebra, Oxford, 1976)}, Studies in Logic and the Foundations of Math., vol.~95, North-Holland, 1980, 401--441.

\bibitem{Whiston00}
J. Whiston, \emph{Maximal independent generating sets of the symmetric group}, J. Algebra \textbf{232} (2000), 255--268.

\bibitem{WiegoldWilson78}
J. Wiegold and J. S. Wilson, \emph{Growth sequences of finitely generated groups}, Arch. Math. \textbf{30} (1978), 337--343.

\bibitem{Zaremsky24} 
M. C. B. Zaremsky, \emph{Finite presentability of twisted Brin--Thompson groups}, Proc. Roy. Soc. Edinburgh Sect. A, appears online (2024), 1--12.

\end{thebibliography}
\end{document}